\documentclass[reqno]{amsart}
\usepackage{amsfonts}
\usepackage{mathrsfs}
\usepackage{amsmath}
\usepackage{amssymb}
\usepackage{esint}
\usepackage[numbers,sort&compress]{natbib}
\usepackage{enumitem}
\usepackage{tikz}
\usetikzlibrary{arrows}
\usepackage{xcolor}
\usepackage{subcaption}
\usepackage{cases}
\AtBeginDocument{}
\allowdisplaybreaks
\definecolor{naturegreen}{RGB}{123, 173, 115}
\definecolor{naturepurple}{RGB}{221,160,221}
\begin{document}
\title[]
{Boundedness and blow-up for a quasilinear	Keller-Segel system with flux limitation \\and indirect signal production}

\author[Y. Zhou and C. Liu]
{Yue Zhou and Changchun Liu$^*$ }

\address{Yue Zhou\hfill\break School of Mathematics, Jilin
University, Changchun 130012, China}
\email{2747376759@qq.com}
\address{Changchun Liu\hfill\break School of Mathematics, Jilin
University, Changchun 130012, China}
\email{liucc@jlu.edu.cn}
\date{}
%\thanks{This work is supported by the Science and Technology Development Plan
%	Project of Jilin Province, China (No. 20250102009JC)}
\thanks{$^*$Corresponding author. E-mail: liucc@jlu.edu.cn}
\subjclass[2020]{35B44, 35K59, 35Q92, 92C17}
\keywords{Keller-Segel system, Indirect signal production, Boundedness, Finite-time blow-up.}

\begin{abstract}
The quasilinear	Keller-Segel system with flux limitation and indirect signal production
\begin{equation*}
\begin{cases}
u_t=\nabla\cdot\left(D(u)\nabla u\right)
-\nabla\cdot\left(u(1+\left|\nabla v\right|^2)^\sigma\nabla v\right),
&x\in\Omega,~t>0,
\\
0=\Delta v-v+w,&x\in\Omega,~t>0,
\\
w_t=-w+u,&x\in\Omega,~t>0,
\end{cases}
\end{equation*}
under homogeneous Neumann boundary conditions in a smooth bounded domain $\Omega\subset\mathbb{R}^N$ is considered, where $D(u)\simeq u^{m-1}$ as $u\simeq\infty$.
We conclude that
\begin{itemize}[leftmargin=*]
\item For $N=1$ and any $\sigma\in\mathbb{R}$, if $m\geq0$, the classical solution exists globally, and it is moreover bounded if $m>0$. However, if $m<0$ and $\Omega$ is a ball, there exist radially symmetric initial data such that the classical solution exhibits finite-time blow-up.
\item For any $N\geq2$ and $m>1-\frac{1}{N}$, if $\sigma\leq \frac{mN+2-2N}{2N-2}$, the classical solution is global. Furthermore, if $\sigma<\frac{mN+2-2N}{2N-2}$, the corresponding solution is uniformly bounded.
\item For any $N\geq2$ and $m<2-\frac{2}{N}$, if $\sigma>\max\left\{\frac{N}{2-2N},\frac{mN+2-2N}{2N-2}\right\}$ and $\Omega$ is a ball, there exist radially symmetric initial data such that the classical solution blows up in finite time.
\end{itemize}
\end{abstract}

\maketitle
\let\oldsection\section
\renewcommand\section{\setcounter{equation}{0}\oldsection}
\renewcommand\thesection{\arabic{section}}
\renewcommand\theequation{\thesection.\arabic{equation}}
\newtheorem{theorem}{\indent Theorem}[section]
\newtheorem{lemma}{\indent Lemma}[section]
\newtheorem{proposition}{\indent Proposition}[section]
\newtheorem{definition}{\indent Definition}[section]
\newtheorem{remark}{\indent Remark}[section]
\newtheorem{corollary}{\indent Corollary}[section]
\section{Introduction}
Chemotactic movement is a key driving mechanism underlying collective behaviors in numerous domains, spanning from immunology, carcinogenesis and disease evolution to ecology and social systems \cite{P}. As the prototypical model for populations attracted by endogenously produced chemical signals, the cross-diffusion system proposed by Keller and Segel \cite{KS} reads
\begin{equation}\label{ks}
\begin{cases}
u_t=\Delta u-\nabla\cdot(u\nabla v),
\\
v_t=\Delta v-v+u,
\end{cases}
\end{equation}
where $u$ and $v$ are the population density and signal concentration, respectively. Both existence and blow-up phenomena for system \eqref{ks} and associated models have been thoroughly studied \cite{CS,CG,W100}.

The gradient of $v$ appears linearly in \eqref{ks}. In contrast, to characterize chemotaxis phenomena in the presence of large signal gradients, the following system is considered:
\begin{equation}\label{1-5}
\begin{cases}
u_t=\Delta u
-\nabla\cdot\left(u(1+|\nabla v|^2)^\sigma\nabla v\right),
&x\in\Omega,~t>0,
\\
0=\Delta v-\mu(t)+u,
\quad\mu(t)=\frac{1}{\left|\Omega\right|}\int_\Omega u(\cdot,t),
&x\in\Omega,~t>0,
\end{cases}
\end{equation}
which is supplemented with homogeneous Neumann boundary conditions. Results from Winkler \cite{W6}, Kohatsu \cite{K2}, and Tello \cite{T} showed that in $N=1$, problem \eqref{1-5} possesses a globally bounded classical solution for any $\sigma\in\mathbb{R}$, provided that the initial data are nonnegative and continuous. Meanwhile, for $N\geq2$, $\sigma=-\frac{N-2}{2(N-1)}$ constitutes the critical blow-up exponent for problem \eqref{1-5}. Recently, Zhang et al. \cite{ZMT} considered the quasilinear case of system \eqref{1-5}, which replaces $\Delta u$ with $\nabla\cdot((u+1)^{m-1}\nabla u)$. The authors proved that boundedness holds if $N\geq 1,~\sigma<\frac{mN+2-2N}{2N-2}$ and $m>1-\frac{1}{N}$, and also in the critical case $\sigma=\frac{mN+2-2N}{2N-2}$ with $m\geq 1,~N\geq 2$ under a small-mass condition. Conversely, for radial solutions in a ball with $N\geq 3$, finite-time blow-up occurs in the critical case with large mass or in the regime $0>\sigma>\min\{\frac{mN+2-2N}{2N-2},-\frac{N-2}{2N-2}\}$. It was shown in \cite{ZMT} that these results are optimal for $m \geq 1$, while the case $0 < m < 1$ remains open. The authors further noted that their method does not extend to the two-dimensional case. For the system
\begin{equation}\label{1-6}
\begin{cases}
u_t=\nabla\cdot\left((u+1)^{m-1}\nabla u
-u(1+|\nabla v|^2)^\sigma\nabla v\right),
&x\in\Omega,~t>0,
\\
0=\Delta v-v+u,
&x\in\Omega,~t>0,
\end{cases}
\end{equation}
under the Neumann boundary conditions, global boundedness results have been obtained in \cite{W7,ZY} for $m\geq1$ and $\sigma<\min\{\frac{mN+2-2N}{2N-2},0\}$. \textbf{However, the behavior of solutions in the complementary parameter regimes has not been studied yet and is expected to exhibit more complex dynamics including blow-up}.

It is worth noting that all the aforementioned models \eqref{ks}-\eqref{1-6} feature direct signal production, where the chemical substance is assumed to be secreted by the cells themselves. In many physiological and ecological settings, however, the emission of chemoattractants involves a more intricate, multi-stage mechanism. In parallel, as a biologically more realistic modeling framework, chemotaxis systems with indirect signal production have been intensively investigated. A prototypical example is given by the following system
\begin{equation}\label{1-3}
\begin{cases}
u_t=\Delta u-\nabla\cdot\left(u\nabla v\right),
&x\in\Omega,~t>0,
\\
0=\Delta v-\mu(t)+w,
\quad
\mu(t)=\frac{1}{\left|\Omega\right|}\int_\Omega w(\cdot,t),
&x\in\Omega,~t>0,
\\
w_t=-w+u,
&x\in\Omega,~t>0,
\end{cases}
\end{equation}
which is a simplified version of the chemotaxis model originally proposed by Strohm et al. \cite{STP}. Here, $u$ and $w$ represent the densities of flying and nesting mountain pine beetles, respectively, whereas $v$ denotes the concentration of beetle pheromones. The mathematical properties of \eqref{1-3} and its quasilinear variants, particularly regarding the criticality of global existence versus blow-up, have been intensively investigated \cite{TW2,L2,JL2,FLT}.
To understand how indirect signaling interacts with gradient-dependent sensitivity limits, Jin et al. \cite{JLY} recently studied
\begin{equation}\label{1-7}
\begin{cases}
u_t=\Delta u
-\nabla\cdot\left(u(1+\left|\nabla v\right|^2)^\sigma\nabla v\right),
\;\;&\hbox{}\;x\in\Omega,~t>0,
\\
0=\Delta v-\mu(t)+w,
\quad\mu(t)=\frac{1}{\left|\Omega\right|}\int_\Omega u(\cdot,t),
\;\;&\hbox{}\;x\in\Omega,~t>0,
\\
w_t=-w+u,
\;\;&\hbox{}\;x\in\Omega,~t>0.
\end{cases}
\end{equation}
For the Neumann initial-boundary value problem of \eqref{1-7}, they showed that if either $N=1$ and $\sigma\in\mathbb{R}$, or $N\geq2$ and $\sigma<-\frac{N-2}{2N-2}$, then for any properly regular initial data, the corresponding solution exists globally and remains bounded. Conversely, if $\Omega$ is a ball and $0\geq\sigma>-\frac{N-2}{2N-2}$, then there exist radially symmetric initial data such that the corresponding solution blows up in finite time. From these conclusions, it is apparent that the results in \cite{JLY} for $N \geq 2$ are restricted to $\sigma \leq 0$, \textbf{leaving the case $\sigma>0$ completely open. Furthermore, the case where the term $-\mu(t)$ in the equation for $v$ is replaced by $-v$ in \eqref{1-7} remains a gap in the current results}.

Since $-\mu(t)$ depends only on the time variable $t$, while $v$ depends on both the spatial variable $x$ and time $t$, the latter setting is more involved. As a result, the techniques employed in the direct signaling model \cite{ZMT} and the indirect signaling model \cite{JLY} are no longer directly applicable. Meanwhile, the analysis of the indirect signaling model is inherently more complex than that of the direct signaling model \eqref{1-6}, as it involves two evolving components $u$ and $w$ coupled through the elliptic equation for $v$, requiring more delicate multi-variable energy estimates rather than a single energy functional for $u$. Furthermore, it is well known that the competition between chemotactic aggregation and diffusion determines whether solutions exist globally or blow up. Therefore, we investigate the following quasilinear Keller-Segel system with flux limitation and indirect signal production
\begin{equation}\label{wen}
\begin{cases}
u_t=\nabla\cdot\left(D(u)\nabla u\right)
-\nabla\cdot\left(u(1+\left|\nabla v\right|^2)^\sigma\nabla v\right),
\;\;&\hbox{}\;x\in\Omega,~t>0,
\\
0=\Delta v-v+w,
\;\;&\hbox{}\;x\in\Omega,~t>0,
\\
w_t=-w+u,
\;\;&\hbox{}\;x\in\Omega,~t>0,
\\
\frac{\partial u}{\partial\nu}
=\frac{\partial v}{\partial\nu}=0,
\;\;&\hbox{}\;x\in\partial\Omega,~t>0,
\\
u(x,0)=u_{0},~w(x,0)=w_{0},
\;\;&\hbox{}\;x\in\Omega,
\end{cases}
\end{equation}
where
\begin{equation}\label{D}
D>0\text{~on~}[0,\infty),
\end{equation}
satisfying $D(\xi)\simeq\xi^{m-1}$ as $u\simeq\infty$. We aim to determine the critical blow-up exponent for problem \eqref{wen}.

\textbf{Main difficulties.} The system \eqref{wen} is challenging due to the presence of nonlinearly coupled terms and the spatiotemporal dependence of $v$.
\begin{itemize}
\item[1.]
Compared with the case where the diffusion and chemotaxis terms depend on the population density $u$ itself (see, e.g., \cite{TW}), the inclusion of the nonlinear flux-limited term $-\nabla\cdot(u(1+|\nabla v|^2)^\sigma\nabla v)$ in \eqref{wen} renders the analysis more complicated. In particular, the primary difficulty lies in establishing the comparison principle for the proof of blow-up. This explains why the analytical arguments in \cite{JLY,ZMT} were restricted to the regime $\sigma\leq 0$. Meanwhile, \cite{ZMT,TW,JLY} reveal that the spatial average $-\mu(t)$ can be eliminated during the proof of the comparison principle. In contrast, our setting involving the linear decay $-v$ inherently introduces a fully coupled triple of state variables $(u, v, w)$ into the differential inequality of the comparison principle. This extra layer of coupling prevents the elimination of the signal term.
\item[2.]
The introduction of nonlinear diffusion into the framework of \cite{JLY} complicates the identification of the critical blow-up exponent. Moreover, the classical technique based on the Neumann heat semigroup employed in the linear case to prove the existence of solutions is no longer applicable.
\end{itemize}

\textbf{Our strategies and novelties.} To overcome the aforementioned obstacles, this paper introduces the following approaches.

\begin{itemize}
\item[1.]
For radial solutions, we establish a precise pointwise upper bound for $|v'(r)|$ with respect to the radius $r$ (Lemma \ref{le2.3}). This key gradient estimate successfully unlocks the comparison principle for the parameter regime $\sigma > 0$, thereby breaking the artificial boundary encountered in \cite{JLY,ZMT}. Furthermore, when verifying the differential inequality $\mathcal{P}[\underline{U},\underline{W}](s,t)\leq0$, the core strategy is to leverage the chemotactic term to dominate the diffusion effect. This domination crucially relies on the lower bound estimates for $\widehat{W}-eb(s)$ (Lemma \ref{le3.5}). This requirement motivates us to construct a new subsolution that is distinct from the one employed in \cite{ZMT, TW}.
\item[2.]
To overcome the failure of the Neumann heat semigroup technique, we derive sharper energy estimates, which enable us to cover a broader parameter regime and establish the critical exponent. Notably, our approach is equally applicable to the direct signaling model \eqref{1-6}, yielding a wider parameter regime than those previously obtained in \cite{W7,ZY}.
\end{itemize}

We prove that for any $1-\frac{1}{N} < m < 2-\frac{2}{N}~(N \geq 2)$, $\frac{mN+2-2N}{2N-2}$ is the critical blow-up exponent for problem \eqref{wen}. This paper is organized as follows. Chapter 2 is devoted to presenting and proving a series of technical lemmas required for the subsequent analysis. The finite-time blow-up results are established in Chapter 3, where the regimes $\sigma\geq 0$ (Subsection \ref{sub3.3.1}) and $\sigma<0$ (Subsection \ref{sub3.3.2}) are analyzed separately through the construction of concise yet practical subsolutions (Lemma \ref{le3.3} and Lemma \ref{le3.4}).
In Chapter 4, we prove the existence of solutions separately for the cases $N=1$ (Section \ref{se4.1}) and $N\geq2$ (Section \ref{se4.2}).

Specifically, the main results of this paper are as follows.

\begin{theorem}\label{th1.1}
Let $\Omega=  B_R(0)\subset\mathbb{R}^N~(N\geq 1)$ with some $R>0$. Suppose that $D\in C^3\left(\left[0,\infty\right)\right)$ satisfies \eqref{D} and
\begin{equation}\label{1-8}
D(\xi)\leq K_D\xi^{m-1},
\quad\text{for all~}
\xi>\xi_0,
\end{equation}
where $\xi_0>0,~K_D>0$ are constants, and exponents $m\in\mathbb{R},~\sigma\in\mathbb{R}$ satisfy
\begin{equation}\label{1-9}
m<2-\frac{2}{N},
\end{equation}
and
\begin{equation}\label{1-10}
\sigma\in
\begin{cases}
\left(-\infty,+\infty\right),
&\text{if}~N=1,
\\
\left(\max\{\frac{N}{2-2N},\frac{mN+2-2N}{2N-2}\},+\infty\right),
&\text{if}~N\geq 2.
\end{cases}
\end{equation}
Then for any $T^*>0$ and $M^*>M_*>0$, there exist $M^{(u)},~M^{(w)}\in C^0\left(\left[0,R\right]\right)$ such that if the initial data
\begin{equation}\label{1-11}
(u_0,w_0)\in\left(C^1(\overline{\Omega})\right)^2
\text{~are nonnegative and radially symmetric},
\end{equation}
and further satisfy
\begin{equation}\label{1-12}
M_*\leq\int_\Omega u_0\mathrm{d}x\leq M^*
\quad\text{and}\quad
M_*\leq\int_\Omega w_0\mathrm{d}x\leq M^*,
\end{equation}
together with
\begin{equation}\label{1-13}
\int_{B_r(0)}u_0\mathrm{d}x\geq M^{(u)}(r)
~\text{and}~
\int_{B_r(0)}w_0\mathrm{d}x\geq M^{(w)}(r),
\quad\text{for all}~
r\in(0,R),
\end{equation}
then the classical solution $(u,v,w)$ to the system \eqref{wen} blows up before time $T^*$. Precisely, there exists a maximal existence time $T_{\max}\in(0,T^*)$ such that
\begin{equation}\label{1-14}
\limsup_{t\nearrow T_{\max}}\|u(\cdot,t)\|_{L^\infty(\Omega)}
=\infty.
\end{equation}
\end{theorem}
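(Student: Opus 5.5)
The plan is the mass-accumulation / comparison method in the style of Winkler and Tao--Winkler, adapted to the indirect, $v$-decaying signal. In the radial setting I introduce the cumulated densities
\[
U(s,t)=\int_0^{s^{1/N}}\rho^{N-1}u(\rho,t)\,d\rho,\qquad W(s,t)=\int_0^{s^{1/N}}\rho^{N-1}w(\rho,t)\,d\rho,\qquad s\in[0,R^N].
\]
Integrating the $w$-equation gives $W_t=-W+U$. Integrating the $v$-equation gives $r^{N-1}v_r=\int_0^r\rho^{N-1}(v-w)\,d\rho$, so $-r^{N-1}v_r=W-V$, where $V(s,t)=\int_0^{s^{1/N}}\rho^{N-1}v\,d\rho$. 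The $u$-equation then becomes a scalar parabolic inequality of the form
\[
\mathcal P[U,W]:=U_t-N^2s^{2-\frac2N}D(NU_s)U_{ss}-NU_s\,(W-V)\Bigl(1+s^{\frac2N-2}(W-V)^2\Bigr)^{\sigma}s^{-\frac{(N-1)\cdot 2\sigma}{N}\cdot\frac12\cdot 0}\cdots=0 .
\]
Since $v\ge0$ and $V\le \frac{C}{N}s$ by elliptic bounds on $\|v\|_{L^\infty}$ (or $\|v\|_{L^1}$), the term $W-V$ can be bounded below by $W-Cs$. The key ingredient is the pointwise estimate $|v_r|\le C r^{1-N}\min\{r^N,1\}$-type bound from Lemma \ref{le2.3}. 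It controls the factor $(1+|v_r|^2)^\sigma$ from above when $\sigma>0$, and from below when $\sigma<0$ (trivially by $1$ or by $|v_r|^{2\sigma}$). This makes the nonlinear flux monotone enough to yield a comparison principle for pairs $(U,W)$: if $\mathcal P[\underline U,\underline W]\le0$, $\underline W_t\le -\underline W+\underline U$, and the boundary and initial ordering hold, then $\underline U\le U$ and $\underline W\le W$. I would prove this by the standard argument at a first touching point, where the cooperative structure in $W$ and the gradient bound handle the extra $V$-coupling.

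Next I construct subsolutions, separately for $\sigma\ge0$ (Lemma \ref{le3.3}) and $\sigma<0$ (Lemma \ref{le3.4}). They have the form $\underline U(s,t)=a(t)\,\phi(s/b(t))$-like profiles with an inner steep part on $s\le b(t)$ and an outer part $\sim s^{\kappa}$. Here $b(t)\searrow0$ at some $T<T^*$ via an ODE $b'=-\gamma b^{\theta}$. Companion functions satisfy $\underline W=e\,\underline U$-type relations, with $\widehat W-eb(s)$ bounded below as in Lemma \ref{le3.5}. The key verification is $\mathcal P\le0$. In the inner region the chemotactic term $NU_s(W-V)^{1+2\sigma}s^{\sigma(2/N-2)}$, or $NU_s(W-V)$ when $\sigma<0$ for small argument, must dominate the diffusion term $N^2 s^{2-2/N}D(NU_s)U_{ss}$, which by \eqref{1-8} is at most $Cs^{2-2/N}(U_s)^{m-1}|U_{ss}|$. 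Balancing the powers of $b$ gives exactly the condition $\sigma>\frac{mN+2-2N}{2N-2}$, that is, $m<2-\frac2N+(2N-2)\sigma/N\cdots$. The restriction $\sigma>\frac N{2-2N}$ ensures that the inner chemotactic term has a positive exponent gain when $\sigma<0$, and \eqref{1-9} is needed for the outer region. The time-derivative term $U_t\sim b'\partial_b\underline U$ is absorbed by choosing $\gamma$ small. Once this holds, the functions $M^{(u)},M^{(w)}$ are defined as $\omega_N\underline U(\cdot^N,0)$ and $\omega_N\underline W(\cdot^N,0)$, and the mass window $[M_*,M^*]$ fixes the constants uniformly.

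To conclude, comparison gives $U(s,t)\ge\underline U(s,t)$ for $t<\min\{T,T_{\max}\}$. Since $\underline U(s,t)\ge c>0$ on $s\ge b(t)\to0$ while $U(0,t)=0$, we get $\|u\|_\infty\ge N U_s\gtrsim c/b(t)\to\infty$. Hence $T_{\max}\le T<T^*$, and the extensibility criterion yields \eqref{1-14}. For $N=1$ the same scheme works with no constraint on $\sigma$, since the $s$-weights disappear. I expect the main obstacle to be the comparison principle when $\sigma>0$ in combination with the $V$-coupling. The first thing I would try is the explicit gradient bound of Lemma \ref{le2.3}, which keeps $(1+|v_r|^2)^\sigma$ bounded on compact sub-annuli and lets me linearize the difference at the touching point.
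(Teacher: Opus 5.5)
\textbf{Gap 1: the comparison principle fails for $\sigma<-\frac12$.} You write the flux-limitation factor as $(1+s^{\frac2N-2}(W-V)^2)^\sigma$, so the drift depends on the $W$-component through $F(x)=x(1+ax^2)^\sigma$, with $a=s^{\frac2N-2}$ and $F'(x)=(1+ax^2)^{\sigma-1}\bigl(1+(1+2\sigma)ax^2\bigr)$. For $\sigma<-\frac12$ this derivative is negative once $ax^2=|v_r|^2>1/|1+2\sigma|$, which is precisely where mass aggregates. Such $\sigma$ are covered by the theorem: every $\sigma<-\frac12$ when $N=1$, and $\sigma\in(\frac{N}{2-2N},-\frac12)$ when $N\ge2$ and $m<1-\frac1N$.

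In that range the pair $(U,W)$ is no longer cooperative. At a first touching point of $\underline U-U-\varepsilon e^{\vartheta t}$ you only know $\underline W\le W+O(\varepsilon e^{\vartheta t})$. But $F(\underline W-V)-F(W-V)$ can then be positive and of size comparable to $W-\underline W$, not $O(\varepsilon e^{\vartheta t})$, so $\vartheta$ cannot absorb it. No pointwise bound on $v_r$ repairs this. Also, the available bound is only $|v_r|\le C\|w\|_{L^1}r^{1-N}$, not the $r^{1-N}\min\{r^N,1\}$ you quote. Your worry about $\sigma>0$ is therefore aimed at the wrong regime.

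The paper avoids the issue by \emph{not} expressing $v_r$ through $W$. In $\mathcal P$ it keeps the factor $(1+|\partial_r v(r,t)|^2)^\sigma$ of the true solution as a frozen, positive, locally bounded coefficient. It replaces $V$ by the purely spatial majorant $b(s)$ only in the linear factor $\psi-b(s)$. Then $\mathcal P$ is linear in $\psi$, and the Tao--Winkler touching argument works for every $\sigma\in\mathbb R$. Lemma~\ref{le2.3} enters only afterwards, for $\sigma<0$, to bound that coefficient below by $c\,s^{2\sigma(\frac1N-1)}$ when checking $\mathcal P[\underline U,\underline W]\le0$; for $\sigma\ge0$ one simply uses $\ge1$.

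\textbf{Gap 2: the $W$-component of the subsolution is never constructed.} The bound $V\le Cs$ is not available for $N\ge2$ uniformly up to $T_{\max}$, since only $\|w\|_{L^1}$ is controlled. What you get is $V\le CM^*s^{\frac2N}$ for $N\ge3$ and $V\le CM^*(s+s^{\frac34}+s^{\frac32})$ for $N=2$, and this is essentially sharp under concentration. So $\underline W$ must dominate $e\,b(s)$ as $s\to0$. A profile comparable to $\underline U$, which is linear in $s$ on $[0,\frac1g]$, cannot do this for $N\ge2$. Moreover, a relation $\underline W=c\,\underline U$ violates $\underline W_t+\underline W\le\underline U$, because $\underline U_t/\underline U=(1-\varepsilon)g'/g-\theta\to\infty$ in the inner region.

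The paper chooses $\underline W=e^{-\theta t}ds^\varepsilon$ with $\varepsilon<\frac1N$. Then $\mathcal Q[\underline U,\underline W]\le0$ is automatic for $\theta\ge1$. Also $\underline W-b\ge e^{-1}(\widehat W-eb)\ge\frac{d}{2e}s^\varepsilon$ on $[0,s_*]$ (Lemma~\ref{le3.5}). Tested against $\widehat U_s=dg^{1-\varepsilon}$, this closes the inner-region estimate under $g'\le\gamma g^{2-\varepsilon}$, or $g'\le\gamma g^{1+\delta_1}$ when $\sigma<0$.

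The rest of your outline is in line with the paper: mass coordinates, a two-zone profile for $\underline U$ with shrinking inner radius $1/g(t)$, and the final blow-up argument from $U\ge\underline U$.
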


\begin{theorem}\label{th1.2}
Let $\Omega\subset\mathbb{R}^N~(N\geq1)$ denote a bounded domain with smooth boundary. Suppose that $D\in C^2([0,\infty))$ satisfies \eqref{D} and
\begin{equation}\label{1-15}
D(\xi)\geq k_D\xi^{m-1},
\quad\text{for all~}
\xi>\xi_0,
\end{equation}
where $\xi_0>0,~k_D>0$ are constants, and exponents $m\in\mathbb{R},~\sigma\in\mathbb{R}$  fulfill
\begin{itemize}
\item~$N=1$.
\end{itemize}
\begin{equation*}
m\geq0
\quad\text{and}\quad
\sigma\in\mathbb{R}.
\end{equation*}
\begin{itemize}
\item~$N\geq 2$.
\end{itemize}
\begin{equation*}
m>1-\frac{1}{N}
\quad\text{and}\quad
\sigma\leq\frac{mN+2-2N}{2N-2}.
\end{equation*}
Then for every pair of nonnegative initial data
\begin{equation}\label{1-16}
(u_0, w_0)\in\left(C^1(\overline{\Omega})\right)^2,
\end{equation}
the classical solution $(u,v,w)$ of \eqref{wen} exists globally. Furthermore, if
\begin{itemize}
\item~$N=1$.
\end{itemize}
\begin{equation*}
m>0
\quad\text{and}\quad
\sigma\in\mathbb{R},
\end{equation*}
\begin{itemize}
\item~$N\geq 2$.
\end{itemize}
\begin{equation*}
m>1-\frac{1}{N}
\quad\text{and}\quad
\sigma<\frac{mN+2-2N}{2N-2},
\end{equation*}
hold, then the global solution is uniformly bounded.
\end{theorem}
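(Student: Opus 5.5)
We reduce global existence to an a priori bound on $\|u(\cdot,t)\|_{L^p(\Omega)}$ for some sufficiently large $p$, uniform on $(0,\min\{T,T_{\max}\})$ for each finite $T$. Boundedness follows when the bound is uniform in $t\in(0,T_{\max})$.

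\textbf{Local theory and extensibility.} A standard fixed-point argument gives local existence of a classical solution of \eqref{wen}, together with the criterion that $T_{\max}<\infty$ forces $\limsup_{t\nearrow T_{\max}}\|u(\cdot,t)\|_{L^\infty(\Omega)}=\infty$. Here $w$ is obtained from the ODE $w=e^{-t}w_0+\int_0^t e^{-(t-s)}u(\cdot,s)\,\mathrm{d}s$, and $v=(-\Delta+1)^{-1}w$. Mass is controlled by $\int_\Omega u=\int_\Omega u_0$ and $\int_\Omega w\le \max\{\int_\Omega u_0,\int_\Omega w_0\}$.

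Since the heat semigroup argument is not available, we pass from an $L^p$ bound to $L^\infty$ by Moser--Alikakos iteration. If $\|u\|_{L^p}$ is bounded, then $\|w\|_{L^p}$ is bounded through the ODE. Elliptic regularity then bounds $\|v\|_{W^{2,p}}$, and for $p>N$ this yields a bound on $\|\nabla v\|_{L^\infty}$. At that point the flux $(1+|\nabla v|^2)^\sigma\nabla v$ is bounded, and the iteration applied to the $u$-equation with diffusion $D(u)\ge k_Du^{m-1}$ gives an $L^\infty$ bound for any $m$ in the admissible range.

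\textbf{Energy estimate for $N\ge2$.} Test the first equation against $u^{p-1}$. The diffusion contributes $-(p-1)k_D\int u^{m+p-3}|\nabla u|^2$, which up to lower-order terms equals $-c\int|\nabla u^{(m+p-1)/2}|^2$. The cross term is $(p-1)\int u^{p-1}(1+|\nabla v|^2)^\sigma\nabla v\cdot\nabla u$, and we split into two cases.

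\emph{Case $\sigma\le0$.} Bound $(1+|\nabla v|^2)^\sigma|\nabla v|\le |\nabla v|^{2\sigma+1}$ (or by a constant if $2\sigma+1\le0$). Young's inequality then reduces the cross term to $\int u^{p+1-m}|\nabla v|^{2(2\sigma+1)}$.

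\emph{Case $\sigma>0$.} Use $(1+|\nabla v|^2)^\sigma|\nabla v|\le C(1+|\nabla v|^{2\sigma+1})$, which reduces the cross term in the same way.

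To handle $|\nabla v|$, add $\int_\Omega w^{q}$ to the functional. Since $\frac{d}{dt}\int w^q\le -q\int w^q+q\int w^{q-1}u$, the evolution of $w$ is controlled by that of $u$. Elliptic estimates give $\|\nabla v\|_{L^{r}}\le C\|w\|_{L^{s}}$ with $\frac1r=\frac1s-\frac1N$ whenever $s<N$ is admissible. Starting from the $L^1$ bound on $w$, this gives $\nabla v\in L^{N/(N-1)-\varepsilon}$ uniformly, and a bootstrap upgrades it.

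We then apply Hölder to $\int u^{p+1-m}|\nabla v|^{2(2\sigma+1)}$, followed by Gagliardo--Nirenberg in $u^{(m+p-1)/2}$ using $\|u\|_{L^1}$, interpolating $\|\nabla v\|$ through $\|w\|_{L^q}$. The exponent count shows that the resulting power of $\|\nabla u^{(m+p-1)/2}\|_2$ is strictly below $2$ exactly when $2\sigma+1<\frac{mN+2-2N}{N-1}+1$, that is, when $\sigma<\frac{mN+2-2N}{2N-2}$, using $m>1-\frac1N$. Young's inequality then yields
\[
y'+y\le C,\qquad y=\int_\Omega u^p+\int_\Omega w^p,
\]
and hence a uniform bound.

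In the critical case $\sigma=\frac{mN+2-2N}{2N-2}$ the power equals $2$ and the constant cannot be absorbed. Instead we use the growth $\frac{d}{dt}y\le C\,y\,(1+\text{integrable factor})$, or equivalently an $L\log L$-type estimate, which allows exponential growth but gives global existence without a uniform bound. I expect the exponent bookkeeping in this Gagliardo--Nirenberg and Hölder step, done jointly in $(u,w)$, to be the main obstacle. The critical global case is where I am least certain of the details, and I would first try Grönwall with a time-dependent constant.

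\textbf{$N=1$.} The $L^1$ bound on $w$ gives $\|v_x\|_{L^\infty}\le C\|w\|_{L^1}\le C$ directly, since $v=(-\partial_{xx}+1)^{-1}w$ in one dimension. So the flux $(1+v_x^2)^\sigma v_x$ is bounded for every $\sigma\in\mathbb{R}$, and the $u$-equation becomes a porous-medium equation with a bounded drift.

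For $m>0$, testing against $u^{p-1}$ and applying one-dimensional Gagliardo--Nirenberg with the $L^1$ bound gives uniform $L^p$ bounds, and Moser iteration gives boundedness. For $m=0$ the diffusion $D(u)\sim u^{-1}$ is too weak for uniform absorption. Instead we obtain $\frac{d}{dt}\int u^p\le C_p\int u^p+C$, which gives bounds that may grow in time but prevent blow-up at any finite time, so the solution is global.
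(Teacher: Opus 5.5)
Your overall architecture matches the paper's: test the $u$-equation with $u^{p-1}$, control $\nabla v$ by elliptic regularity plus an $L^q$-functional of $w$, apply Gagliardo--Nirenberg against the conserved mass, and use Moser iteration once $\nabla v$ is bounded. The uniform-boundedness parts are plausible, although for $N\ge2$ you leave the exponent bookkeeping open. But the two borderline global-existence statements of the theorem are not proved. In both cases the obstruction you describe comes from how you split the cross term.

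\textbf{Critical case $N\ge2$, $\sigma=\frac{mN+2-2N}{2N-2}$.} You say the constant cannot be absorbed, and you leave an $L\log L$/Gr\"onwall idea open. That is true for a H\"older split, which has no free parameter. The paper instead splits additively with Young, putting an arbitrary $\varepsilon$ on the $u$-part. For $\sigma>-\frac12$,
\[
\int_\Omega u^{\gamma-m+1}|\nabla v|^{2(1+2\sigma)}\le\varepsilon\int_\Omega u^{Q}+C_\varepsilon\int_\Omega|\nabla v|^{\frac{NQ}{N-1}},
\qquad Q:=\gamma-m+1+2(1+2\sigma)\Big(1-\frac1N\Big).
\]
This exponent makes the $\nabla v$-term exactly linear in $\int_\Omega w^Q$. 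Set $\frac1s=\frac1N+\frac{N-1}{NQ}$. The embedding $W^{2,s}\hookrightarrow W^{1,\frac{NQ}{N-1}}$, elliptic regularity, and $\|w\|_{L^s}\le\|w\|_{L^1}^{1/N}\|w\|_{L^Q}^{(N-1)/N}$ together give $\int_\Omega|\nabla v|^{\frac{NQ}{N-1}}\le C\int_\Omega w^Q$. Meanwhile $w_t=-w+u$ gives $\frac{d}{dt}\int_\Omega w^Q\le\varepsilon\int_\Omega u^Q+C_\varepsilon\int_\Omega w^Q$.

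At the critical $\sigma$ one has $Q=\gamma+m-1+\frac2N$. Gagliardo--Nirenberg then gives $\int_\Omega u^Q\le C(\|\nabla u^{\frac{\gamma+m-1}{2}}\|_{L^2}^2+1)$ with $C$ depending only on the mass. A small $\varepsilon$ absorbs this term and leaves $y'\le Cy+C$ for $y=\int_\Omega u^\gamma+\int_\Omega w^Q$, hence $T$-dependent bounds; no logarithmic estimate is needed.

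The same split also closes your subcritical step. After H\"older and Gagliardo--Nirenberg you are left with products of powers of the dissipation and of $\|w\|$, and you never check that these close. With the additive split, one instead takes the Young parameter large, absorbs the $u$-terms via the subcritical Gagliardo--Nirenberg exponent, and uses the damping $-\int_\Omega w^Q$ from the $w$-equation.

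\textbf{$N=1$, $m=0$.} Treating the chemotactic term as a bounded drift $F=(1+v_x^2)^\sigma v_x$ gives, after Young, $C\int_\Omega u^{p+1}$ with a fixed constant. With $D(\xi)\ge k_D\xi^{-1}$ the dissipation is $\|\partial_x u^{\frac{p-1}{2}}\|_{L^2}^2$. One-dimensional Gagliardo--Nirenberg gives $\int_\Omega u^{p+1}\le C\|u\|_{L^1}^2\|\partial_x u^{\frac{p-1}{2}}\|_{L^2}^2+C$, where the exponent is exactly $2$, so nothing can be absorbed unless the mass is small. Interpolating against $\int_\Omega u^p$ instead produces a superlinear term $(\int_\Omega u^p)^{\frac{p}{p-1}}$. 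So your claimed $\frac{d}{dt}\int_\Omega u^p\le C\int_\Omega u^p+C$ does not follow.

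The missing step is to integrate by parts, $\int_\Omega u^{p-1}Fu_x=-\frac1p\int_\Omega u^pF_x$, with no boundary term since $v_x=0$ on $\partial\Omega$. Here $F_x=(1+v_x^2)^{\sigma-1}\big(1+(2\sigma+1)v_x^2\big)v_{xx}$, $v_x$ is bounded, and $v_{xx}=v-w$, so $|F_x|\le C(1+w)$. Then $\int_\Omega u^pw\le\varepsilon\int_\Omega u^{p+1}+C_\varepsilon\int_\Omega w^{p+1}$, and the small $\varepsilon$ makes the critical term absorbable. Finally $\frac{d}{dt}\int_\Omega w^{p+1}\le\varepsilon\int_\Omega u^{p+1}+C_\varepsilon\int_\Omega w^{p+1}$ closes $y'\le Cy+C$ for $y=\int_\Omega u^p+\int_\Omega w^{p+1}$.

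In both borderline cases the missing idea is the same: put the small parameter on $u$ and push the large constant onto a $w$-norm, which grows at most linearly because of $w_t=-w+u$.
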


\begin{remark}\label{remark 1.1}
{\rm For $N=1$, we establish that $m=0$ is the critical exponent for blow-up in \eqref{wen}. That is, for any $\sigma\in\mathbb{R}$, global existence of the classical solution of \eqref{wen} holds when $m\geq0$. In contrast, if $m<0$, there exists radially symmetric solution for which finite-time blow-up occurs.}
\end{remark}

\begin{remark}\label{remark 1.2}
{\rm For any $1-\frac{1}{N}<m<2-\frac{2}{N}~(N\geq 2)$, $\sigma=\frac{mN+2-2N}{2N-2}$ is the critical exponent for blow-up in \eqref{wen}. More precisely, global existence of the solution holds when $\sigma\leq\frac{mN+2-2N}{2N-2}$, while finite-time blow-up occurs when $\sigma>\frac{mN+2-2N}{2N-2}$. The method developed in this paper can be readily applied to model \eqref{1-7}, requiring only the replacement of $b(s)$ in \eqref{3-11} by $\frac{\mu}{N}s$. For $m=1$, our results contain and extend those in \cite{JLY} concerning existence and blow-up of solutions.}
\end{remark}

The main results of this work are captured by the following figure.

\begin{figure}[!htbp]
\centering
\begin{subfigure}[b]{0.42\textwidth}
\centering
\begin{tikzpicture}[scale=0.5]
\pgfmathsetmacro{\twoOverN}{2/1}
\pgfmathsetmacro{\xIntercept}{1 - \twoOverN}
\fill[naturepurple!30] (-4, 4) -- (0, 4) -- (0, -4) -- (-4, -4) -- cycle;
\fill[naturegreen!30] (0, 4) -- (4, 4) -- (4, -4) -- (0, -4) -- cycle;
\draw[thick, red] (0, -4) -- (0, 4);
\draw[->, thin, black] (0, 4) -- (0, 4);
\node[above, black] at (0, 4) {$\sigma$};
\draw[->, thin, black] (-4, 0) -- (4, 0) node[right] {$m$};
\node[below, black, font=\small] at (0, -4.5) {$N=1$};
\end{tikzpicture}
\end{subfigure}
\hspace{0.3cm}
\begin{subfigure}[b]{0.42\textwidth}
\centering
\begin{tikzpicture}[scale=0.5]
\fill[naturepurple!30] (-4, 0) -- (3, 0) -- (3, 4) -- (-4, 4) -- cycle;
\fill[naturepurple!30] (-4, -0.8) -- (1, -0.8) -- (3, 0) -- (-4, 0)-- cycle;
\draw[black, dashed, thin] (3, 0) -- (3, 4);
\draw[black, dashed, thin] (1, -0.8) -- (2, -0.4);
\draw[black, dashed, thin] (-4, -0.8) -- (1, -0.8);
\fill[naturegreen!30] (4, 0.4) -- (2, -0.4)-- (2, -4) -- (4, -4)-- cycle;
\draw[->, thin, black] (0, -4) -- (0, 4);
\node[above, black] at (0, 4) {$\sigma$};
\draw[->, thin, black] (-4, 0) -- (4, 0);
\node[right, black] at (4, 0) {$m$};
\draw[red, thick] (2, -0.4) -- (4, 0.4);
\draw[black, dashed, thin] (2, -0.4) -- (2, -4);
\node[black, font=\tiny, align=center] (formula) at (1.5, 2) {$\sigma = \frac{mN+2-2N}{2N-2}$};
\draw[black, -latex] (3.5, 0.3) -- (formula.south);
\fill[black] (1, -0.8) circle (2pt);
\node[left, font=\small] at (1.2, -1.1) {$A$};
\fill[black] (2, -0.4) circle (2pt);
\node[left, font=\small] at (2, -0.3) {$B$};
\fill[black] (3, 0) circle (2pt);
\node[below, font=\small] at (3, 0) {$C$};
\node[below, black, font=\small] at (0, -4.5) {$N\geq2$};
\end{tikzpicture}
\end{subfigure}

\vspace{0.3cm}
\begin{minipage}[c]{0.87\textwidth}
 \centering
\begin{tikzpicture}[font=\tiny, baseline, anchor=west]
\fill[naturepurple!30] (0,0) rectangle (0.6,0.3);
\node[right, align=left, anchor=west] at (0.7,0.15) {Purple: finite-time blow-up};
\fill[naturegreen!30] (0,-0.6) rectangle (0.6,-0.3);
\node[right, anchor=west] at (0.7,-0.45) {Green: global existence and uniform boundedness};
\draw[red, thick] (0,-1.2) -- (0.6,-1.2);
\node[right, anchor=west] at (0.7,-1.2) {Red line: global existence};
\node[font=\tiny, anchor=west] at (-0.15,-1.8) {$A(1-\frac{2}{N},\frac{N}{2-2N})$,~$B(1-\frac{1}{N},-\frac{1}{2})$,~$C(2-\frac{2}{N},0)$};
\end{tikzpicture}
\end{minipage}
\end{figure}

\section{Local existence and preliminaries}
In what follows, we simplify $\int_\Omega f(\cdot,t)\mathrm{d}x$ to $\int_\Omega f$ and use $\left\|\cdot\right\|_{L^p}=\left\|\cdot\right\|_{L^p(\Omega)}$. Additionally, the notation $c_i$ $(i=1,~2,~\dots)$ will be used to represent unspecified positive constants.

\begin{lemma}{\rm(\cite{TW2})}\label{le2.1}
Assume that $\Omega\subset\mathbb{R}^N~(N\geq 1)$ is a bounded smooth domain and that $D\in C^2([0,\infty))$ satisfies \eqref{D}. Let $(u_0, w_0)\in\left(C^1(\overline{\Omega})\right)^2$ be nonnegative. Then there exist $T_{\max}\in (0,\infty]$ and a unique triplet $(u,v,w)$ of nonnegative functions such that
\begin{equation}\label{2-1}
\begin{cases}
u\in C^0\left(\overline{\Omega}\times[0,T_{\max})\right)
\cap
C^{2,1}\left(\overline{\Omega}\times(0,T_{\max})\right),
\\
v\in C^{2,0}\left(\overline{\Omega}\times [0,T_{\max})\right),
\\
w\in C^{0,1}\left(\overline{\Omega}\times[0, T_{\max})\right),
\end{cases}
\end{equation}
which forms a classical solution of \eqref{wen} in $\Omega\times(0,T_{\max})$. Furthermore, the following properties are valid:
\begin{itemize}
\item If $T_{\max}<\infty$, then
\begin{equation}\label{2-2}
\limsup_{t\nearrow T_{\max}}\|u(\cdot,t)\|_{L^\infty(\Omega)}
=\infty.
\end{equation}
\item For all $t\in(0,T_{\max})$, we have
\begin{equation}\label{2-3}
\int_\Omega u(\cdot,t)\mathrm{d}x
=\int_\Omega u_0\mathrm{d}x.
\end{equation}
\item If $u_0$ and $w_0$ are radially symmetric, then the solution is radially symmetric.
\end{itemize}
\end{lemma}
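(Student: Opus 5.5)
The plan is to prove Lemma \ref{le2.1} by a Schauder-type fixed point argument that exploits the special structure of \eqref{wen}. Given $u$, the third equation is an ODE and $w$ has the explicit form
\begin{equation*}
w(x,t)=e^{-t}w_0(x)+\int_0^t e^{-(t-s)}u(x,s)\,\mathrm{d}s .
\end{equation*}
Given $w$, the function $v$ is the unique solution of the Neumann problem $-\Delta v+v=w$.

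For $T\in(0,1)$ and $R_0:=\|u_0\|_{L^\infty}+1$, we work in the closed convex set
\begin{equation*}
S:=\bigl\{\bar u\in C^0(\overline\Omega\times[0,T]) : \|\bar u\|_{L^\infty}\le R_0\bigr\}.
\end{equation*}
The fixed point map is built in three steps.
\begin{itemize}
\item From $\bar u\in S$ we form $w$ by the formula above.
\item From $w$ we form $v$. By elliptic $L^p$ theory, $\|v(\cdot,t)\|_{W^{2,p}}\le C\|w(\cdot,t)\|_{L^p}$ for every $p<\infty$, so $\nabla v$ is bounded and H\"older continuous in $x$, with bounds depending only on $R_0$ and $\|w_0\|_{L^\infty}$.
\item We solve the quasilinear Neumann problem $u_t=\nabla\cdot(D(u)\nabla u)-\nabla\cdot(u(1+|\nabla v|^2)^\sigma\nabla v)$, $u(\cdot,0)=u_0$.
\end{itemize}
The key point in the last step is that $D>0$ on $[0,\infty)$, by \eqref{D}. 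Once a priori $L^\infty$ bounds on $u$ are available, the equation is uniformly parabolic and the drift coefficient $(1+|\nabla v|^2)^\sigma\nabla v$ is bounded, whatever the sign of $\sigma$. Amann's theory, or the Ladyzhenskaya--Ural'tseva results for divergence-form quasilinear equations, then gives a solution together with a uniform H\"older bound. For $T$ small the solution stays within $R_0$ and depends continuously on $\bar u$. Compactness comes from the H\"older bound and Arzel\`a--Ascoli, so Schauder's theorem yields a fixed point.

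Regularity is then upgraded by bootstrapping. Since $w$ inherits time regularity from $u$, Schauder estimates give $u\in C^{2,1}$ for $t>0$, $v\in C^{2,0}$ and $w\in C^{0,1}$. This step uses $D\in C^2$ and $w_0\in C^1$; the $C^1$ regularity of $w_0$ is needed so that the drift coefficient has enough spatial regularity for the classical Schauder theory applied to the expanded divergence.

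The remaining claims are proved as follows.
\begin{itemize}
\item \textbf{Nonnegativity.} This follows from the parabolic comparison principle, since $u\equiv0$ is a solution of the $u$-equation. Then $w\ge0$ from the explicit formula, and $v\ge0$ by the elliptic maximum principle.
\item \textbf{Uniqueness.} We test the difference of two solutions, with the nonlinear diffusion handled via $\int (A(u_1)-A(u_2))(u_1-u_2)$ duality or via $L^2$ Gronwall estimates using the Lipschitz continuity of $\xi\mapsto(1+|\xi|^2)^\sigma\xi$ on bounded sets. The $v$- and $w$-differences are controlled linearly by $\|u_1-u_2\|$.
\item \textbf{Extensibility \eqref{2-2}.} Since the existence time $T$ depends only on $\|u_0\|_{L^\infty}$ and $\|w_0\|_{L^\infty}$ (through $R_0$ and the elliptic bounds), together with the $C^1$-type norms, a standard continuation argument applies. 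If $\|u\|_{L^\infty}$ stays bounded, then $w$ stays bounded by the explicit formula, and so does $\nabla w$, by Gronwall applied to $\nabla w_t=-\nabla w+\nabla u$ with parabolic gradient bounds on $u$. The solution can therefore be extended.
\item \textbf{Mass identity \eqref{2-3}.} Integrate the $u$-equation over $\Omega$ and use the Neumann conditions.
\item \textbf{Radial symmetry.} This follows from uniqueness and the rotation invariance of $\Omega=B_R(0)$ and of the equations.
\end{itemize}

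The main obstacle is the extensibility criterion. We must show that boundedness of $u$ alone suffices, which requires controlling $\nabla w$, or at least keeping $\nabla v$ H\"older continuous. I would first try to avoid $\nabla w$ entirely: $\|u\|_{L^\infty}$ bounds $w$ in $L^\infty$, hence $v$ in $W^{2,p}$ for all $p$, hence $\nabla v$ in $C^{\alpha}$ in $x$. This is already enough for H\"older estimates of $u$ in divergence form, so the local solution can be restarted with the relevant norms bounded. Since the paper attributes the lemma to \cite{TW2}, the argument is a standard adaptation of the one there.
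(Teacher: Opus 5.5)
Your proposal is correct. The paper gives no argument of its own and simply attributes the lemma to \cite{TW2}, and your sketch is the standard argument behind such a local existence statement: a Schauder fixed point through the explicit formula for $w$ and elliptic $W^{2,p}$ control of $v$ from $-\Delta v+v=w$, then a Schauder bootstrap, duality-based uniqueness, continuation from uniform H\"older bounds, and symmetry by uniqueness. You also correctly adapt it to $D(u)$ and to the drift $(1+|\nabla v|^2)^\sigma\nabla v$, which is bounded for every $\sigma$.

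One detail is worth making explicit. Because $D$ may decay at infinity (e.g.\ $m<1$), the a priori $L^\infty$ bound for the frozen $u$-problem should be obtained after truncating $D$ to a positive function bounded below outside $[0,2R_0]$, or by freezing $D(\bar u)$ as well. Your H\"older continuity at $t=0$ then shows that $u\le R_0$ for small $T$, so the truncation is inactive.
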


\begin{lemma}{\rm(\cite{ZK})}\label{le2.2}
Let $T>0$, $\tau\in(0,T)$, $A>0$, $\alpha>0$ and $B>0$. Suppose that $y:~[0,T)\rightarrow [0,\infty)$ is absolutely continuous and satisfies
\begin{equation*}
y'(t)+Ay^\alpha(t)\leq h(t),
\quad\text{for a.e.}~t\in(0,T),
\end{equation*}
with some nonnegative function $h\in L_{loc}^1\left([0,T)\right)$. If
\begin{equation*}
\int_t^{t+\tau}h(s)\mathrm{d}s
\leq B,
\quad\text{for all}~t\in(0,T-\tau),
\end{equation*}
then there exists a constant $\tilde{C}=\max\left\{y_0+B,\frac{1}{\tau^{\frac{1}{\alpha}}}(\frac{B}{A})^{\frac{1}{\alpha}}+2B\right\}$,
such that
$$y(t)\leq\tilde{C},
\quad\text{for~all}~t\in(0,T).$$
\end{lemma}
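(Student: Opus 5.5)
The plan is a window-by-window comparison that uses only $y'\le h$ and the uniform bound on $\int_t^{t+\tau}h$. Set
$$K:=\Big(\frac{B}{A\tau}\Big)^{\frac1\alpha}=\frac{1}{\tau^{\frac1\alpha}}\Big(\frac{B}{A}\Big)^{\frac1\alpha}.$$
Two elementary facts will be used repeatedly. The first is that, since $Ay^\alpha\ge0$, integrating the differential inequality gives, for every $0\le s_1<s_2<T$,
$$y(s_2)\le y(s_1)+\int_{s_1}^{s_2}h .$$
The second concerns subintervals $[s_1,s_2]$ of length at most $\tau$. On such an interval $\int_{s_1}^{s_2}h\le B$, since it is contained in a window $[t',t'+\tau]$ with $t'\in(0,T-\tau)$, or is a limit of such windows for intervals touching $0$ or $T$. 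Here I use $h\ge0$, absolute continuity and monotone convergence. If near the right endpoint $T$ one only has a bound by a sum of two overlapping windows, this contributes at most $2B$, which is where the $2B$ in $\tilde C$ comes from.

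First step (early times). For $t\in(0,\tau]$, the first fact together with the window bound gives $y(t)\le y_0+\int_0^t h\le y_0+B\le\tilde C$.

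Second step (the key observation). Suppose $y(s)>K$ for all $s$ in a full window $[a,a+\tau]$. Then integrating the inequality yields
$$y(a+\tau)\le y(a)+\int_a^{a+\tau}h-A\int_a^{a+\tau}y^\alpha< y(a)+B-A\tau K^\alpha=y(a).$$
Hence along any stretch on which $y$ stays above $K$, the values sampled at spacing $\tau$ strictly decrease.

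Third step (general $t$). Fix $t\in(\tau,T)$. Let $s_0:=\sup\{s\in[0,t]:y(s)\le K\}$, with $s_0:=0$ if this set is empty. By continuity, $y(s_0)\le\max\{K,y_0\}$ and $y>K$ on $(s_0,t]$. Write $t-s_0=k\tau+\rho$ with $k\in\mathbb N_0$ and $\rho\in[0,\tau)$. Applying the second step successively on $[s_0+\rho+(j-1)\tau,\,s_0+\rho+j\tau]$ for $j=1,\dots,k$ gives $y(t)\le y(s_0+\rho)$. On these windows $y>K$, except possibly at the left endpoint $s_0$ when $\rho=0$, which is harmless by continuity. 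On the initial partial interval $[s_0,s_0+\rho]$ of length $<\tau$, the first step's argument gives $y(s_0+\rho)\le y(s_0)+B$. Combining,
$$y(t)\le \max\{K,y_0\}+B\le\max\Big\{y_0+B,\ \frac{1}{\tau^{\frac1\alpha}}\Big(\frac BA\Big)^{\frac1\alpha}+2B\Big\}=\tilde C .$$

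Main obstacle. The only delicate point is bookkeeping rather than analysis. The hypothesis on $h$ is stated only for $t\in(0,T-\tau)$, so intervals touching $0$ or abutting $T$ have to be justified. I would handle this by approximating from inside with windows $[\varepsilon,\varepsilon+\tau]$ and letting $\varepsilon\to0$, and near $T$ by covering a short interval with at most two admissible windows. This is exactly the slack absorbed by the extra $B$ in $\tilde C$.
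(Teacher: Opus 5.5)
The paper does not prove this lemma; it only quotes it from \cite{ZK}. So the comparison is with the standard argument for this type of estimate. Your proof is correct and complete, and it actually gives the sharper bound $y(t)\le\max\{y_0,K\}+B$ with $K=(B/(A\tau))^{1/\alpha}$.

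The usual proof goes by contradiction and runs backwards. Suppose $y(t)>K+2B$ for some $t>\tau$. Then $y(s)\ge y(t)-\int_s^t h>K+B$ on all of $[t-\tau,t]$, so integrating over that window gives $y(t-\tau)>y(t)$. Iterating pushes the time back into $[0,\tau]$, where $y\le y_0+B$, which is a contradiction. There the second $B$ is exactly the price of certifying $y>K$ on the whole window through the crude lower bound $y(s)\ge y(t)-B$.

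Your route uses the same core mechanism: a full window on which $y\ge K$ cannot increase $y$. The difference is that you locate the last time $s_0\le t$ with $y(s_0)\le K$ and descend along the lattice anchored at $s_0+\rho$. This costs only one partial interval, hence only one $B$. On the first window when $\rho=0$ you have only $y\ge K$, but that still yields the non-strict $y(s_0+\tau)\le y(s_0)$, which is all you use. The backward argument avoids the last-exit time and the decomposition $t-s_0=k\tau+\rho$. Yours is direct and leaves a full $B$ of slack in $\tilde C$.

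Your bookkeeping remark needs one correction. The two-overlapping-windows scenario never occurs, and it is not where the $2B$ comes from. Take any $[s_1,s_2]\subset[0,T)$ with $0<s_1$ and $s_2-s_1\le\tau$. Set $t'=s_2-\tau$ if this is positive, and $t'=\min\{s_1,\frac{T-\tau}{2}\}$ otherwise. In both cases $t'\in(0,T-\tau)$ and $[s_1,s_2]\subset[t',t'+\tau]$, so a single admissible window suffices. The case $s_1=0$ follows from the intervals $[\varepsilon,s_2]$ by letting $\varepsilon\searrow0$.

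This matters. If a bound of $2B$ on $\int_a^{a+\tau}h$ were ever needed, your second step would break, because $A\tau K^\alpha=B$ absorbs only a single $B$. You should state your second fact as a clean single-window bound and drop the hedge.
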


\begin{lemma}\label{le2.3}
Let $\Omega=B_R(0)\subset\mathbb{R}^N (N\geq2)$ and $R>0$. Suppose that $h\in L^1(\Omega)$ is a radially symmetric function. Let $z$ be the unique radially symmetric solution to
\begin{equation}\label{2-4}
-\Delta z+z=h,
\quad\text{in~}\Omega,
\end{equation}
under homogeneous Neumann boundary conditions. If $h\geq 0$ in $\Omega$, then there exist constants $C_{i}>0 (i=1,2,3,4)$, depending only on $N$ and $R$, such that for any $r\in(0,R]$,
\begin{itemize}
\item~$N=2$.
\end{itemize}
\begin{equation}\label{2-5}
\left|z(r)\right|\leq C_1\left\|h\right\|_{L^1(\Omega)}\left(1+\left|\ln r\right|\right)+\left|z(R)\right|,
\end{equation}
and
\begin{equation}\label{2-6}
\left|z'(r)\right|
\leq \frac{C_2}{r}\left\|h\right\|_{L^1(\Omega)}.
\end{equation}
\begin{itemize}
\item~$N\geq3$.
\end{itemize}
\begin{equation}\label{2-7}
0\leq z(r)
\leq C_3\left\|h\right\|_{L^1(\Omega)}r^{2-N},
\end{equation}
and
\begin{equation}\label{2-8}
\left|z'(r)\right|
\leq C_4\left\|h\right\|_{L^1(\Omega)}r^{1-N}.
\end{equation}
\end{lemma}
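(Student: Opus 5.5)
We will work with the radial form of \eqref{2-4}. Writing $z=z(r)$, the equation reads $-(r^{N-1}z')'+r^{N-1}z=r^{N-1}h$ on $(0,R)$, with $z'(0)=z'(R)=0$. Integrating from $0$ to $r$ gives the basic identity
\begin{equation*}
r^{N-1}z'(r)=\int_0^r\rho^{N-1}z(\rho)\,\mathrm{d}\rho-\int_0^r\rho^{N-1}h(\rho)\,\mathrm{d}\rho .
\end{equation*}

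The first step is to show that $z\geq0$ and $\int_\Omega z=\int_\Omega h$. Nonnegativity follows from the maximum principle, or from positivity of the Neumann resolvent $(I-\Delta)^{-1}$ for $h\ge 0$; for merely $L^1$ data we would obtain it by approximating $h$ with smooth nonnegative radial functions and passing to the limit. The mass identity comes from integrating \eqref{2-4} over $\Omega$ and using the Neumann condition. Both integrals on the right-hand side of the identity are then nonnegative. Each is bounded by $\frac{1}{|S^{N-1}|}\|h\|_{L^1}$, since $\int_0^r\rho^{N-1}z\,\mathrm{d}\rho\le\frac{1}{|S^{N-1}|}\int_\Omega z=\frac{1}{|S^{N-1}|}\|h\|_{L^1}$, and likewise for the $h$-integral. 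The difference of two numbers lying in $[0,a]$ has absolute value at most $a$, so
\begin{equation*}
|z'(r)|\le \frac{\|h\|_{L^1}}{|S^{N-1}|}\,r^{1-N}\quad\text{for all } r\in(0,R].
\end{equation*}
This single estimate gives both \eqref{2-6} (the case $N=2$) and \eqref{2-8}, with $C_2=C_4=1/|S^{N-1}|$.

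The second step bounds $z$ itself by integrating $z'$ inward from $R$, using $z(r)=z(R)-\int_r^R z'(\rho)\,\mathrm{d}\rho$.
\begin{itemize}
\item For $N=2$ this gives $|z(r)|\le |z(R)|+C_2\|h\|_{L^1}\ln(R/r)$. Since $\ln(R/r)\le |\ln R|+|\ln r|\le c(R)\,(1+|\ln r|)$, we obtain \eqref{2-5}.
\item For $N\ge3$ the same integration gives $z(r)\le z(R)+\frac{C_4}{N-2}\|h\|_{L^1}r^{2-N}$. Here we additionally need $z(R)\lesssim \|h\|_{L^1}$, which is the main obstacle.
\end{itemize}

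To control $z(R)$ we will use a mean-value argument. By the gradient bound, for $\rho\in[R/2,R]$ we have $|z(R)-z(\rho)|\le c\|h\|_{L^1}$. Hence
\begin{equation*}
z(R)\le \frac{1}{|B_R\setminus B_{R/2}|}\int_{B_R\setminus B_{R/2}}z+c\|h\|_{L^1}\le c'\|h\|_{L^1},
\end{equation*}
where the last inequality uses $z\ge0$ and $\int_\Omega z=\|h\|_{L^1}$. Combining this with the inward integration, and using $1\le R^{N-2}r^{2-N}$ for $r\le R$, gives \eqref{2-7} with lower bound $0$ from the first step. The constants $C_1,\dots,C_4$ depend only on $N$ and $R$.

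The remaining technical point is justifying the radial integration and the maximum principle when $h$ is only in $L^1$. We will handle this by approximating $h$ with smooth data and noting that every bound above depends only on $\|h\|_{L^1}$, so the estimates pass to the limit.
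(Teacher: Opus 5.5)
Your proof is correct. For $N=2$ it is the paper's argument. The paper also integrates the radial equation, uses $\|z\|_{L^1(\Omega)}=\|h\|_{L^1(\Omega)}$ (which tacitly needs $z\ge0$, as you make explicit), and integrates $z'$ inward from $R$. Your remark that both integrals in $r^{N-1}z'(r)=\int_0^r\rho^{N-1}(z-h)\,\mathrm{d}\rho$ are nonnegative only improves the paper's $C_2=2\omega_2^{-1}$ to $\omega_2^{-1}$.

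The genuine difference is $N\ge3$. The paper does not prove \eqref{2-7}--\eqref{2-8}; it quotes \cite[Lemma A.1]{L4}. You instead derive them from the same identity in dimension $N$, together with the annulus-averaging bound $0\le z(R)\le c(N,R)\|h\|_{L^1(\Omega)}$. This makes the lemma self-contained and uniform in $N$, and it gives slightly more. The averaging argument works verbatim for $N=2$ (with $\int_{R/2}^R\rho^{-1}\,\mathrm{d}\rho=\ln 2$), so the term $|z(R)|$ in \eqref{2-5} can be replaced by $c(R)\|h\|_{L^1(\Omega)}$. That is the form actually needed in \eqref{3-22}, where the contribution of $v(R,t)$ must be absorbed into $C_7(R)M^*$.

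The price of your route is the bookkeeping for $h\in L^1$, which you handle by approximation. To pass the pointwise bounds to the limit, note that the $L^1$-contractivity of the Neumann resolvent, combined with the radial identity, gives locally uniform convergence of $z_k$ and $z_k'$ on $(0,R]$. In the application $h=w(\cdot,t)$ is continuous, so this point is harmless.
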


\begin{proof}
\eqref{2-7} and \eqref{2-8} follow immediately from \cite[Lemma A.1]{L4}.
As $z$ is the unique radially symmetric solution to \eqref{2-4} under homogeneous Neumann boundary conditions, we have that $z$ satisfies
\begin{equation}\label{2-9}
\left\|z\right\|_{L^1(\Omega)}=\left\|h\right\|_{L^1(\Omega)},
\end{equation}
and solves
\begin{equation}\label{2-10}
-\left(rz'(r)\right)_r=r(h-z)(r),
\quad\text{for all }r\in(0,R].
\end{equation}
Integrating \eqref{2-10} over $(0,r)$ gives
\begin{equation}\label{2-11}
z'(r)
= r^{-1}\int_0^r(z-h)(\rho)\rho\mathrm{d}\rho,
\quad\text{for all }r\in(0,R].
\end{equation}
By combining \eqref{2-9} and \eqref{2-11}, we obtain that
\begin{align}\label{2-12}
\left|z'(r)\right|
\leq&r^{-1}\int_0^r\left|z(\rho)+h(\rho)\right|\rho\mathrm{d}\rho
\nonumber
\\
\leq&r^{-1}\int_0^R\left(\left|z(\rho)\right|+\left|h(\rho)\right|\right)\rho\mathrm{d}\rho
\nonumber
\\
\leq& r^{-1}\omega_2^{-1}\left(\left\|z\right\|_{L^1(\Omega)}+\left\|h\right\|_{L^1(\Omega)}\right)
\nonumber
\\
=&2\omega_2^{-1}\left\|h\right\|_{L^1(\Omega)}r^{-1},
\quad\text{for }N=2\text{ and all }r\in(0,R],
\end{align}
where $\omega_2$ denotes the surface area of the unit sphere in $\mathbb{R}^2$. Setting $C_2=2\omega_2^{-1}$ completes the proof of \eqref{2-6}.
Employing the identity
\begin{equation*}
z(r)=-\int_r^R z'(\rho)\mathrm{d}\rho+z(R),
\quad\text{for all }r\in(0,R],
\end{equation*}
along with \eqref{2-12}, we deduce that
\begin{align*}
\left|z(r)\right|
\leq&\int_r^R\left|z'(\rho)\right|\mathrm{d}\rho+\left|z(R)\right|
\nonumber
\\
\leq&2\omega_2^{-1}\left\|h\right\|_{L^1(\Omega)}\left|\int_r^R\rho^{-1}\mathrm{d}\rho\right|
+\left|z(R)\right|
\nonumber
\\
=&2\omega_2^{-1}\left\|h\right\|_{L^1(\Omega)}\left|\ln R-\ln r\right|+\left|z(R)\right|
\nonumber
\\
\leq&C_1\left\|h\right\|_{L^1(\Omega)}\left(1+\left|\ln r\right|\right)+\left|z(R)\right|,
\quad\text{for all }r\in(0,R],
\end{align*}
with $C_1=2\omega_2^{-1}(1+\left|\ln R\right|)$.
\end{proof}

\section{Finite-time blow-up}

\subsection{A comparison lemma}

\begin{lemma}\label{le3.1}
Let $N\geq1$, $R>0$, and $M^*>M_*>0$. Suppose that $D\in C^3([0,\infty))$ fulfills \eqref{D}, and that the initial data $(u_0,w_0)$ satisfies \eqref{1-11}, together with
\begin{equation}\label{3-1}
M_*\leq\int_\Omega u_0\mathrm{d}x\leq M^*
\quad\text{and}\quad
M_*\leq\int_\Omega w_0\mathrm{d}x\leq M^*.
\end{equation}
Define
\begin{equation}\label{3-2}
\mu_*:=\frac{M_*}{2|\Omega|}
\quad\text{and}\quad
\mu^*:=CM^*,
\end{equation}
where $C>0$ is a constant depending only on $N$ and $R$.
For the solution $(u,v,w)$ in \eqref{wen}, we define the mass distribution functions
\begin{equation}\label{3-3}
U(s,t):=\int_0^{s^{\frac{1}{N}}}\rho^{N-1}u(\rho,t)\mathrm{d}\rho,
\quad s\in[0,R^N],~t\in[0,T_{\max}),
\end{equation}
and
\begin{equation}\label{3-4}
W(s,t):=\int_0^{s^{\frac{1}{N}}}\rho^{N-1}w(\rho,t)\mathrm{d}\rho,
\quad s\in [0,R^N],~t\in[0, T_{\max}),
\end{equation}
with the following properties:
\begin{itemize}
\item Regularity:
$$U\in C^0\left([0,T_{\max});C^1([0,R^N])\right)\cap C^1\left([0,R^N]\times(0,T_{\max})\right),$$
and
$$W,~W_t\in C^0\left([0,R^N]\times[0,T_{\max})\right).$$
\item Monotonicity and boundary values:
$$U_s\geq 0,$$
and for all $t\in(0,T_{\max})\cap(0,\ln 2)$,
\begin{equation}\label{3-5}
U(0,t)=W(0,t)=0,
\quad
U(R^N,t)\geq\frac{\mu_* R^N}{N},
\quad
W(R^N,t)\geq\frac{\mu_* R^N}{N}.
\end{equation}
\item Differential inequalities: For all $s\in(0,R^N)$ and $t\in(0,T_{\max})\cap(0,\ln 2)$,
\begin{equation}\label{3-6}
\mathcal{P}[U,W](s,t)\geq0,
\end{equation}
and
\begin{equation}\label{3-7}
\mathcal{Q}[U,W](s,t)=0.
\end{equation}
\end{itemize}
Hereafter, for any $T>0$, we consider functions $\varphi$ and $\psi$ defined on $[0,R^N]\times(0,T)$ with regularity specified as follows:
$$\varphi\in C^1\left([0,R^N]\times(0,T)\right)\quad\text{and}\quad\psi,~\psi_t\in C^0\left([0,R^N]\times(0,T)\right),$$
and
$$\varphi_s\geq 0,\quad\text{on~}(0,R^N)\times(0,T),$$
as well as
$$\varphi(\cdot,t)\in W_{loc}^{2,\infty}\left(\left(0,R^N\right)\right),
\quad\text{for all~}t\in(0,T).$$
For these functions, we let
\begin{align}\label{3-8}
&\mathcal{P}[\varphi,\psi](s,t)
\nonumber
\\
:=&\varphi_t
-N^2s^{2-\frac{2}{N}}D(N\varphi_s)\varphi_{ss}
-N\varphi_s\left(\psi-b(s)\right)\cdot(1+\left|\partial_rv(r,t)\right|^2)^\sigma,
\end{align}
and
\begin{equation}\label{3-9}
\mathcal{Q}[\varphi,\psi](s,t):=\psi_t+\psi-\varphi,
\end{equation}
for $t\in(0,T)$ and a.e. $s\in(0,R^N)$, where
$v$ is the solution to \eqref{wen}, and
\begin{equation}\label{3-10}
r:=\sqrt[N]{s},
\quad\text{for }s\in(0,R^N),
\end{equation}
as well as
\begin{align}\label{3-11}
b(s):=
\begin{cases}
\mu^* s,
&N=1,
\\
\mu^* \left(s+s^{\frac{3}{4}}+s^{\frac{3}{2}}\right),
&N=2,
\\
\mu^* s^{\frac{2}{N}},
&N\geq3.
\end{cases}
\end{align}
\end{lemma}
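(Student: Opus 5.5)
The plan is to rewrite the radial form of \eqref{wen} in the mass variable $s=r^N$. Once that is done, \eqref{3-7} is an identity, and \eqref{3-6} reduces to a pointwise upper bound for the accumulated mass of $v$. That bound will come from Lemma \ref{le2.3}.

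\textbf{Step 1: regularity, monotonicity, boundary values.} The regularity of $U$ and $W$ follows from \eqref{2-1} of Lemma \ref{le2.1}, together with radial symmetry, by differentiating under the integral in \eqref{3-3}--\eqref{3-4}. Since $u\ge0$, we get $U_s(s,t)=\frac1N u(s^{1/N},t)\ge 0$, and trivially $U(0,t)=W(0,t)=0$.

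Write $\omega_{N-1}$ for the area of the unit sphere, so that $|\Omega|=\omega_{N-1}R^N/N$. Mass conservation \eqref{2-3} gives
$$U(R^N,t)=\frac{1}{\omega_{N-1}}\int_\Omega u_0=\frac{R^N}{N|\Omega|}\int_\Omega u_0\ge \frac{R^N M_*}{N|\Omega|}=\frac{2\mu_*R^N}{N}.$$
Integrating the $w$-equation over $\Omega$ gives $\int_\Omega w=e^{-t}\int_\Omega w_0+(1-e^{-t})\int_\Omega u_0\ge M_*$, so the same computation yields the lower bound for $W(R^N,t)$ in \eqref{3-5}. The same identity also gives $\|w(\cdot,t)\|_{L^1}\le M^*$, which is used below.

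\textbf{Step 2: the equations for $U$ and $W$.} Integrating $w_t=-w+u$ against $\rho^{N-1}$ over $(0,s^{1/N})$ gives $W_t=U-W$, which is \eqref{3-7}.

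For $u$, multiply the radial equation by $\rho^{N-1}$ and integrate over $(0,r)$. This gives
$$U_t=r^{N-1}D(u)u_r-r^{N-1}u(1+v_r^2)^\sigma v_r .$$
Using $u=NU_s$ and $u_r=N^2s^{1-1/N}U_{ss}$, the diffusion term becomes $N^2s^{2-2/N}D(NU_s)U_{ss}$.

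For the cross-diffusion term, integrate $(r^{N-1}v_r)_r=r^{N-1}(v-w)$ over $(0,r)$, which gives $r^{N-1}v_r=V-W$ with $V(s,t):=\int_0^{s^{1/N}}\rho^{N-1}v\,\mathrm d\rho$. Substituting,
$$\mathcal P[U,W]=NU_s(1+|v_r|^2)^\sigma\bigl(W-V\bigr)-NU_s(1+|v_r|^2)^\sigma\bigl(W-b\bigr)=NU_s(1+|v_r|^2)^\sigma\bigl(b(s)-V(s,t)\bigr).$$
Since $U_s\ge0$, \eqref{3-6} is therefore equivalent to $V(s,t)\le b(s)$, with the constant $C$ in $\mu^*=CM^*$ still free to choose.

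\textbf{Step 3: the bound $V\le b$.} Apply Lemma \ref{le2.3} with $h=w\ge0$ and $\|h\|_{L^1}\le M^*$. The argument splits by dimension.

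\emph{Case $N\ge3$.} By \eqref{2-7}, $V\le C_3M^*\int_0^r\rho\,\mathrm d\rho=\frac{C_3}{2}M^*s^{2/N}$.

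\emph{Case $N=1$.} The one-dimensional elliptic estimate $\|v\|_{L^\infty}\le c\|w\|_{L^1}$ (from the Green's function or $W^{1,1}\hookrightarrow L^\infty$) gives $V\le cM^*s$.

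\emph{Case $N=2$.} First bound $|v(R)|$. Since $\|v\|_{L^1}=\|w\|_{L^1}$, there is some $r_0\in(0,R)$, which can be chosen in $(R/2,R)$, with $v(r_0)\le cM^*$. Then \eqref{2-6} gives $|v(R)|\le v(r_0)+C_2M^*\ln 2\le cM^*$. Inserting this into \eqref{2-5},
$$V\le cM^*\int_0^r\rho(1+|\ln\rho|)\,\mathrm d\rho .$$
For $r\le1$, the bound $r^2|\ln r|\le c\,r^{3/2}$ applies; for $r\ge1$, $r^2\ln r\le c\,r^3$. Hence $V\le cM^*(s+s^{3/4}+s^{3/2})$, using $s=r^2$. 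Taking $C$ to be the largest of these constants gives $V\le b(s)$ in every dimension, as required in \eqref{3-11}.

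The main obstacle is the case $N=2$. There the control of $v(R)$ and the logarithmic singularity at the origin are what force the unusual three-term choice of $b$. The restriction $t<\ln 2$ is not needed for these inequalities; it only plays a role later.
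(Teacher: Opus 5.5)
Your proof is correct and follows the paper's route: you pass to the mass variable $s=r^N$, use $r^{N-1}v_r=V-W$ to reduce \eqref{3-6} to $V\le b(s)$, obtain that bound from Lemma \ref{le2.3} (and the one-dimensional $L^\infty$ estimate), and get the boundary values from mass conservation. Two of your steps improve on the paper: your averaging argument giving $|v(R,t)|\le cM^*$ for $N=2$ supplies a bound the paper only asserts, and your observation that $\int_\Omega w\ge M_*$ for all $t$ (using both mass lower bounds) correctly shows that the restriction $t<\ln 2$ is not needed here.
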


\begin{proof}
Applying the first inequality in \eqref{3-1} and \eqref{2-3} leads to
\begin{equation}\label{3-12}
\mu^{(u)}(t)=\frac{1}{\left|\Omega\right|}\int_\Omega u(\cdot,t)\mathrm{d}x
=\frac{1}{\left|\Omega\right|}\int_\Omega u_0\mathrm{d}x
\geq\frac{M_*}{|\Omega|}
\geq\frac{M_*}{2|\Omega|},
\end{equation}
for all $t\in(0,T_{\max})$. Integrate the third equation in \eqref{wen} to derive that
\begin{equation}\label{3-13}
\frac{\mathrm{d}}{\mathrm{d}t}
\mu^{(w)}(t)+\mu^{(w)}(t)
=\mu^{(u)}(t),
\quad\text{for~all}~t\in(0,T_{\max}),
\end{equation}
which gives rise to
\begin{equation*}
\mu^{(w)}(t)
=\left(1-e^{-t}\right)\mu^{(u)}(0)+e^{-t}\mu^{(w)}(0),
\quad\text{for~all~}t\in(0,T_{\max}).
\end{equation*}
Due to the nonnegativity of $\mu^{(u)}(0)$ and \eqref{3-1}, we assert that
\begin{equation}\label{3-14}
\mu^{(w)}(t)
\geq\frac{1}{2}\mu^{(w)}(0)
\geq\frac{M_*}{2|\Omega|},
\quad\text{for all}~t\in(0,T_{\max})\cap(0,\ln 2).
\end{equation}
From \eqref{wen}, we infer that the functions $U$ and $W$ defined in \eqref{3-3} and \eqref{3-4} indeed possess the asserted regularity and monotonicity, and further satisfy
\begin{equation}\label{3-15}
U(0,t)=W(0,t)=0,~
U(R^N,t)=\frac{\mu^{(u)}(t)R^N}{N},~
W(R^N,t)=\frac{\mu^{(w)}(t)R^N}{N},
\end{equation}
for all $t\in(0,T_{\max})$, and
\begin{equation}\label{3-16}
W_t=-W+U,
\quad\text{in }(0,R^N)\times(0,T_{\max}),
\end{equation}
as well as
\begin{equation}\label{3-17}
U_t=N^2s^{2-\frac{2}{N}}D(NU_s)U_{ss}
+NU_s\left(W-V\right)\cdot(1+\left|\partial_rv(r,t)\right|^2)^\sigma,
\end{equation}
in $(0,R^N)\times(0,T_{\max})$, with
\begin{equation}\label{3-18}
V(s,t):=\int_0^{s^{\frac{1}{N}}}\rho^{N-1}v(\rho,t)\mathrm{d}\rho,
\quad s\in [0,R^N],~t\in[0, T_{\max}).
\end{equation}
By means of
\begin{equation}\label{3-19}
w(x,t)
=w_0(x)e^{-t}
+\int_0^te^{-(t-s)}u(x,s)\mathrm{d}s,
\quad x\in\Omega,~t>0,
\end{equation}
and \citep[Remark 2.1]{NT}, we find that
\begin{align}\label{3-20}
&\left\|v(\cdot,t)\right\|_{L^\infty}
\nonumber
\\
\leq&C_5(R)\left\|w\right\|_{L^1}
\nonumber
\\
\leq&C_5(R)\left(\left\|u_0\right\|_{L^1}
+\left\|w_0\right\|_{L^1}\right),
\quad\text{for }N=1~\text{and for all}~t\in[0, T_{\max}),
\end{align}
where $C_5(R)>0$ is a constant depending only on $R$.
Recalling \eqref{3-1}, \eqref{3-18} and \eqref{3-20}, we gain that
\begin{equation}\label{3-21}
V(s,t)
\leq C_5(R)\left(\left\|u_0\right\|_{L^1(\Omega)}
+\left\|w_0\right\|_{L^1(\Omega)}\right)s
\leq 2C_5(R)M^* s,
\quad\text{for }N=1,
\end{equation}
and for all $s\in [0,R^N],~t\in[0, T_{\max})$.
For $s\in [0,R^N]$ and $t\in[0, T_{\max})$, relying on \eqref{2-5}, \eqref{3-1}, \eqref{3-18} and \eqref{3-19}, as well as the fact that
\begin{equation*}
\rho\left|\ln\rho\right|\leq
\begin{cases}
\rho^{\frac{1}{2}},
&\text{if}~\rho\in(0,1],
\\
\rho^2,
&\text{if}~\rho>1,
\end{cases}
\end{equation*}
we estimate that
\begin{align}\label{3-22}
&V(s,t)
\nonumber
\\
\leq&C_6(R)\left\|w\right\|_{L^1}
\int_0^{s^{\frac{1}{N}}}\rho\left(1+\left|\ln\rho\right|\right)\mathrm{d}\rho
+C_6(R)\int_0^{s^{\frac{1}{N}}}\rho\mathrm{d}\rho
\nonumber
\\
\leq&C_6(R)\left(\left\|u_0\right\|_{L^1(\Omega)}
+\left\|w_0\right\|_{L^1(\Omega)}\right)
\left(\int_0^{s^{\frac{1}{N}}}\rho\mathrm{d}\rho
+\int_0^{s^{\frac{1}{N}}}\rho^{\frac{1}{2}}\mathrm{d}\rho
+\int_0^{s^{\frac{1}{N}}}\rho^2\mathrm{d}\rho\right)
\nonumber
\\
&+C_6(R)s
\nonumber
\\
\leq&C_7(R)M^*
\left(s+s^{\frac{3}{4}}+s^{\frac{3}{2}}\right),
\quad\text{for }N=2,
\end{align}
and
\begin{align}\label{3-23}
V(s,t)
\leq&C_8(NR)\left\|w\right\|_{L^1(\Omega)}\int_0^{s^{\frac{1}{N}}}\rho\mathrm{d}\rho
\nonumber
\\
\leq&\frac{C_8(NR)}{2}\left(\left\|u_0\right\|_{L^1(\Omega)}+\left\|w_0\right\|_{L^1(\Omega)}\right)s^{\frac{2}{N}}
\nonumber
\\
\leq&C_8(NR)M^* s^{\frac{2}{N}},
\quad\text{for }N\geq3,
\end{align}
where $C_6(R),~C_7(R)>0$ and $C_8(NR)$ are constants depending only on $N$ and $R$. Let $C:=\max\{2C_5(R), C_7(R), C_8(NR)\}$.
Employing \eqref{3-17}, \eqref{3-21}-\eqref{3-23}, $U_s\geq 0$, as well as the definition of \eqref{3-2}, \eqref{3-8} and \eqref{3-11}, we conclude \eqref{3-6}. \eqref{3-7} is equivalent to \eqref{3-16} under the definition of \eqref{3-9}.
By means of the definition of $\mu_*$ in \eqref{3-2}, and \eqref{3-12}, \eqref{3-14}, \eqref{3-15}, we get \eqref{3-5}.
\end{proof}

\begin{lemma}\label{le3.2}
Let $N\geq1$, $R>0$ and $\mu^*>0$. Assume that $D\in C^3([0,\infty))$ satisfies \eqref{D} and that for some $T>0$, the functions $$\underline{U},~\overline{U}\in C^0\left([0,T);C^1([0,R^N])\right)\cap C^1\left([0,R^N]\times(0,T)\right),$$
and
$$\underline{W},~\overline{W}~\underline{W}_t,~\overline{W}_t~\in C^0\left([0,R^N]\times[0,T)\right),$$
fulfill
\begin{itemize}
\item For all $t\in(0,T)$,
$$\{\underline{U}(\cdot,t),~\overline{U}(\cdot,t)\}\subset W_{loc}^{2,\infty}\left((0,R^N)\right),$$
and
$$\underline{U}_s \geq0,~
\overline{U}_s \geq0,
\quad\text{on}~(0,R^N)\times(0,T).$$
\item The operators $\mathcal{P}$ and $\mathcal{Q}$ introduced in \eqref{3-8} and \eqref{3-9} satisfy
\begin{equation*}
\mathcal{P}[\underline{U},\underline{W}](s,t)
\leq0
\quad\text{and}\quad
\mathcal{P}[\overline{U},\overline{W}](s,t)
\geq0,
\end{equation*}
for all $t\in(0,T)$ and a.e. $s\in(0,R^N)$,
as well as
\begin{equation*}
\mathcal{Q}[\underline{U},\underline{W}](s,t)\leq0
\quad\text{and}\quad
\mathcal{Q}[\overline{U},\overline{W}](s,t)\geq0,
\end{equation*}
for all $t\in(0,T)$ and a.e. $s\in(0,R^N)$.
\item The boundary and initial conditions hold:
\begin{equation*}
\begin{cases}
\underline{U}(0,t)\leq\overline{U}(0,t),
\quad
\underline{U}(R^N,t)\leq\overline{U}(R^N,t),
\\
\underline{W}(0,t)\leq\overline{W}(0,t),
\quad
\underline{W}(R^N,t)\leq\overline{W}(R^N,t),
\end{cases}
\end{equation*}
for all $t\in(0,T)$, and
\begin{equation*}
\underline{U}(s,0)\leq\overline{U}(s,0)
\quad\text{and}\quad
\underline{W}(s,0)\leq\overline{W}(s,0),
\quad\text{for all}~s\in[0,R^N].
\end{equation*}
\end{itemize}
Then
\begin{equation*}
\underline{U}(s,t)\leq\overline{U}(s,t),
\quad\text{for all}~s\in[0,R^N]~\text{and}~t\in[0,T).
\end{equation*}
\end{lemma}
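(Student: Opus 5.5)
The plan is an $L^2$-type energy argument for the positive part of the difference, combined with a Gronwall argument that also tracks the $W$-component. Fix $T'\in(0,T)$ and set
\[
d:=\underline{U}-\overline{U},\qquad z:=\underline{W}-\overline{W},\qquad a(s,t):=\bigl(1+|\partial_rv(s^{1/N},t)|^2\bigr)^\sigma .
\]
The essential observation is that in \eqref{3-8} the flux-limitation factor $a$ is \emph{not} a function of the unknowns. It is a fixed coefficient built from the given solution $v$. Since $v\in C^{2,0}(\overline{\Omega}\times[0,T_{\max}))$ by Lemma \ref{le2.1}, $a$ is continuous, positive and bounded on $[0,R^N]\times[0,T']$, with bounds depending on $T'$.

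Likewise, the regularity $\underline{U},\overline{U}\in C^0([0,T);C^1([0,R^N]))$ gives a bound $|\underline{U}_s|+|\overline{U}_s|\le L$ on $[0,R^N]\times[0,T']$. Because $D>0$ on $[0,\infty)$ by \eqref{D} and is continuous, we get $D\ge\delta>0$ on $[0,NL]$. Let $\mathcal{D}(\xi):=\int_0^\xi D$. Then
\[
N D(N\varphi_s)\varphi_{ss}=\partial_s\mathcal{D}(N\varphi_s),
\]
and on the relevant range $(\mathcal{D}(\xi)-\mathcal{D}(\eta))(\xi-\eta)\ge\delta(\xi-\eta)^2$ and $|\mathcal{D}(\xi)-\mathcal{D}(\eta)|\le \|D\|_{L^\infty(0,NL)}|\xi-\eta|$.

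First I handle $z$ through $\mathcal{Q}$. Subtracting the two $\mathcal{Q}$-inequalities gives $z_t+z\le d\le d_+$. Since $z(\cdot,0)\le0$, this yields
\[
z_+(s,t)\le\int_0^t e^{-(t-\tau)}d_+(s,\tau)\,\mathrm{d}\tau,
\]
hence $\int_0^{R^N}s^{-\kappa}z_+^2(s,t)\,\mathrm{d}s\le t\int_0^t\int_0^{R^N}s^{-\kappa}d_+^2$, where $\kappa:=2-\frac{2}{N}$.

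Next I subtract the $\mathcal{P}$-inequalities, multiply by $s^{-\kappa}d_+$ and integrate over $(0,R^N)$. The weight $s^{-\kappa}$ is chosen precisely to cancel the factor $s^{2-2/N}$ in front of the diffusion. It is admissible because $d(0,t)\le0$ and $d(\cdot,t)\in C^1$ give $d_+\le Cs$ near $0$, so $s^{-\kappa}d_+^2$ is integrable. If needed, I would rigorously test with a cutoff supported in $(\varepsilon,R^N)$ and let $\varepsilon\to0$, using the $W^{2,\infty}_{loc}$ regularity to integrate by parts on compact subintervals.

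The diffusion part becomes $-N\int\bigl(\mathcal{D}(N\underline{U}_s)-\mathcal{D}(N\overline{U}_s)\bigr)d_{+,s}\le -N^2\delta\int d_{+,s}^2$. There are no boundary terms, because $d_+=0$ at $s=0$ and at $s=R^N$. The taxis part splits as
\[
N a\bigl[d_s(\underline{W}-b)+\overline{U}_s\,z\bigr].
\]
The first piece, tested against $s^{-\kappa}d_+$, is bounded by $C\int s^{-\kappa}|d_{+,s}|d_+$, using that $a$, $\underline{W}$ and $b$ are bounded. By Young's inequality this is at most
\[
\tfrac{N^2\delta}{2}\int d_{+,s}^2+C\int s^{-2\kappa}d_+^2 .
\]
This step is where I expect the main obstacle. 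The extra singularity $s^{-2\kappa}$ cannot be controlled directly by $\int s^{-\kappa}d_+^2$. Moreover, one cannot integrate by parts instead, since $\underline{W}$ is merely continuous in $s$. My first attempt would be to use a Hardy-type inequality. Since $d_+(0)=0$ and $d_+(R^N)=0$, we have $\int s^{-2}d_+^2\le4\int d_{+,s}^2$. I would then split off a small neighbourhood $(0,s_0)$, where $s^{-2\kappa}\le s_0^{2-2\kappa}s^{-2}$ (at least when $\kappa\le 1$, i.e.\ $N\le2$), and absorb that portion into the dissipation. For $N\ge3$ I would instead change the weight to $s^{-\beta}$ with $\beta\in(\kappa,\ldots)$ and control the resulting commutator term $\beta s^{\kappa-\beta-1}(\mathcal{D}\text{-difference})d_+$ again by Hardy and Young.

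The second piece satisfies $Na\overline{U}_s z\,s^{-\kappa}d_+\le C s^{-\kappa}(z_+^2+d_+^2)$, using $\overline{U}_s\ge0$ and $z\le z_+$.

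Combining the two parts, $y(t):=\int s^{-\kappa}d_+^2(s,t)\,\mathrm{d}s$ satisfies
\[
y'(t)\le C\Bigl(y(t)+\int_0^t y\Bigr),\qquad y(0)=0 .
\]
A Gronwall argument gives $y\equiv0$ on $[0,T']$, hence $d\le0$. Since $T'<T$ was arbitrary, the claim of Lemma \ref{le3.2} follows.
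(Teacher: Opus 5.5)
\textbf{Where the proposal fails.} Your treatment of the $W$-component through $\mathcal{Q}$ is fine. For $N=1$, where $\kappa=0$ and no weight is needed, the energy argument does close. For $N\geq2$, however, the step you flagged is a genuine gap, and neither proposed repair works, so $y'\le C(y+\int_0^t y)$ is never established.

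For $N=2$ you have $\kappa=1$, hence $s_0^{2-2\kappa}=1$ and shrinking $s_0$ gains nothing. Hardy's inequality gives $\int s^{-1}|c|\,d_+|d_{+,s}|\le 2\|c\|_{\infty}\int d_{+,s}^2$ with $c=2a(\underline{W}-b)$, against the dissipation $4\delta\int d_{+,s}^2$. Absorption would therefore require $\|a(\underline{W}-b)\|_\infty<\delta$, which is not a hypothesis. The coefficient need not even vanish at $s=0$, since only $\underline{W}(0,t)\le\overline{W}(0,t)$ is assumed.

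For $N\geq3$, changing the weight to $s^{-\beta}$ cannot help, because the obstruction is one of scaling and is independent of $\beta$. The dissipation becomes $\int s^{\kappa-\beta}d_{+,s}^2$, which by any Hardy-type inequality controls at most $\int s^{\kappa-\beta-2}d_+^2$. Young's inequality applied to $\int s^{-\beta}|c|\,d_+|d_{+,s}|$ instead leaves $\int s^{-\beta-\kappa}d_+^2$. The ratio of the two weights is $s^{2-2\kappa}\to\infty$ as $s\to0$. For $N\geq4$, the bound $d_+\le Cs$ does not even guarantee that $\int s^{-2\kappa}d_+^2$ is finite. And, as you note, $\underline{W}$ has no $s$-regularity, so the transport term cannot be integrated by parts either.

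\textbf{The missing idea.} Abandon integral estimates and use the pointwise first-touching argument of Tao--Winkler, which is what the paper does. Put $\varphi=\underline{U}-\overline{U}-\varepsilon e^{\vartheta t}$. It is negative at $t=0$ and at $s\in\{0,R^N\}$. So at its first zero $(s_0,t_0)$, the point $s_0$ is interior, $\varphi_t\ge0$, $\varphi_s=0$, and $\liminf_j\varphi_{ss}(s_j,t_0)\le0$ along suitable $s_j\to s_0$.

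Because $\underline{U}_s=\overline{U}_s$ at that point, the entire transport contribution $Na\,d_s(\underline{W}-b)$ drops out. The diffusion mismatch $\{D(N\underline{U}_s)-D(N\overline{U}_s)\}\overline{U}_{ss}$ also drops out, in the limit along $s_j$. What remains is $0\le N\underline{U}_s\,a\,(\underline{W}-\overline{W})-\vartheta\varepsilon e^{\vartheta t_0}$.

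Your Duhamel bound from $\mathcal{Q}$, combined with $d<\varepsilon e^{\vartheta t}$ for $t<t_0$, gives $\underline{W}-\overline{W}\le\varepsilon e^{\vartheta t_0}/(\vartheta+1)$ at $(s_0,t_0)$. This is a contradiction once $\vartheta(\vartheta+1)>N\sup\underline{U}_s\,\sup a$. Neither the degeneracy of $s^{2-2/N}$ at $s=0$ nor the roughness of $\underline{W}$ plays any role in this argument. That is exactly why the maximum-principle route succeeds where the weighted $L^2$ energy does not.
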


\begin{proof}
It is worth noting that in the derivation of $\mathcal{P}(s,t)$ in \eqref{3-8}, the variable $V$ in equation \eqref{3-17} is estimated by $b(s)$, which is a function depending solely on the spatial variable. Moreover, in contrast to \cite{JLY,ZMT}, we retain $\nabla v$ rather than expressing it in terms of $W$. This treatment exempts the term $\nabla v$ from being involved in the comparison of sub- and supersolutions. Consequently, this approach enables us to break through the parameter restriction $\sigma \leq 0$ encountered in \cite{JLY,ZMT}. By the definition of
\begin{equation*}
\varphi(s,t):=\underline{U}(s,t)-\overline{U}(s,t)-\varepsilon e^{\vartheta t},\quad(s,t)\in[0,R^N]\times[0,T'),
\end{equation*}
in \cite[(3.25)]{TW} and $\mathcal{P}[\underline{U},\underline{W}](s,t)
\leq0\leq
\mathcal{P}[\overline{U},\overline{W}](s,t)$, we know that
\begin{align*}
\varphi_t
=&\underline{U}_t-\overline{U}_t-\vartheta\varepsilon e^{\vartheta t}
\\
\leq&N^2s^{2-\frac{2}{N}}D(N\underline{U}_s)\underline{U}_{ss}
-N^2s^{2-\frac{2}{N}}D(N\overline{U}_s)\overline{U}_{ss}
-\vartheta\varepsilon e^{\vartheta t}
\\
&
+N\underline{U}_s\left(\underline{W}-b(s)\right)\left(1+\left|\partial_r v(r,t)\right|^2\right)^\sigma
-N\overline{U}_s\left(\overline{W}-b(s)\right)\left(1+\left|\partial_r v(r,t)\right|^2\right)^\sigma
\\
=&N^2s^{2-\frac{2}{N}}D(N\underline{U}_s)\varphi_{ss}
+N^2s^{2-\frac{2}{N}}\left\lbrace D(N\underline{U}_s)-D(N\overline{U}_s)\right\rbrace\overline{U}_{ss}
-\vartheta\varepsilon e^{\vartheta t}
\\
&
+N\underline{U}_s\left(\underline{W}-b(s)\right)\left(1+\left|\partial_r v(r,t)\right|^2\right)^\sigma
-N\overline{U}_s\left(\overline{W}-b(s)\right)\left(1+\left|\partial_r v(r,t)\right|^2\right)^\sigma.
\end{align*}
Due to $\varphi_t(s_0,t_0)\geq 0$, $\varphi_s(s_0,t_0)=0$ and $\liminf\limits_{j\to\infty} \varphi_{ss}(s_j, t_0) \leq 0$ \cite[(3.27), (3.29) and (3.30)]{TW}, we conclude that
\begin{align*}
0\leq N\underline{U}_s(s_0,t_0)\left(\underline{W}(s_0,t_0)-\overline{W}(s_0,t_0)\right)\left(1+\left|\partial_r v(r_0,t_0)\right|^2\right)^\sigma
-\vartheta\varepsilon e^{\vartheta t_0}.
\end{align*}
Since the arguments are essentially identical to those in \cite[Lemma 3.2]{TW} except for the aforementioned modifications, we omit the remaining details. Note that the operators $\mathcal{P}$ and $\mathcal{Q}$ involve no derivatives of $\underline{W},~\overline{W}$ with respect to $s$, then we only need $\underline{W},~\overline{W}~\underline{W}_t,~\overline{W}_t~\in C^0\left([0,R^N]\times[0,T)\right)$ throughout the proof.
\end{proof}

\subsection{Construction of blow-up subsolutions.}

\begin{lemma}{\rm(\citep[Lemma 3.3]{TW})}\label{le3.3}
Let $N\geq1$, $R>0$, $\mu_*,~\mu^*>0$, $\varepsilon\in(0,\frac{1}{4})$, and
\begin{equation}\label{d}
d
:=\frac{\mu_* R^N}{N\left(e^{\frac{1}{e}}(R^N+1)+\mu^*+R^{N\varepsilon}\right)}.
\end{equation}
For all $T>0$ and any function $g\in C^1([0,T))$ satisfying $g(t)>\frac{1}{R^N}$, we define
\begin{equation}\label{3-25}
\widehat{U}(s,t):=
\begin{cases}
dg^{1-\varepsilon}(t)s,
&t\in[0,T),~s\in[0,\frac{1}{g(t)}],
\\
\varepsilon^{-\varepsilon}d\left(s-\frac{1-\varepsilon}{g(t)}\right)^\varepsilon,
&t\in[0,T),~s\in(\frac{1}{g(t)},R^N].
\end{cases}
\end{equation}
Then we derive that
\begin{itemize}
\item Regularity and boundary values:
$$\widehat{U}\in C^1\left([0,R^N]\times[0,T)\right)\cap C^0\left([0,T);W^{2,\infty}((0,R^N))\right),$$
and
\begin{equation*}
\widehat{U}(0,t)=0
\quad\text{and}\quad
\widehat{U}(R^N,t)\leq\frac{\mu_* R^N}{N},
\quad\text{for all}~t\in(0,T).
\end{equation*}
\item For all $t\in(0,T)$, the function $\widehat{U}(\cdot,t)\in C^2\left([0,R^N]\right)\setminus\{\frac{1}{g(t)}\}$
satisfies
\begin{equation}\label{3-26}
\widehat{U}_t(s,t)=
\begin{cases}
(1-\varepsilon)dg^{-\varepsilon}(t)g'(t)s,
&s\in(0,\frac{1}{g(t)}),
\\
\varepsilon^{1-\varepsilon}(1-\varepsilon)d\left(s-\frac{1-\varepsilon}{g(t)}\right)^{\varepsilon-1}\cdot\dfrac{g'(t)}{g^2(t)},
&s\in(\frac{1}{g(t)},R^N),
\end{cases}
\end{equation}
and
\begin{equation}\label{3-27}
\widehat{U}_{s}(s,t)=
\begin{cases}
dg^{1-\varepsilon}(t),
&s\in(0,\frac{1}{g(t)}),
\\
\varepsilon^{1-\varepsilon}d\left(s-\frac{1-\varepsilon}{g(t)}\right)^{\varepsilon-1},
&s\in(\frac{1}{g(t)},R^N),
\end{cases}
\end{equation}
as well as
\begin{equation}\label{3-28}
\widehat{U}_{ss}(s,t)=
\begin{cases}
0,
&s\in(0,\frac{1}{g(t)}),
\\
-\varepsilon^{1-\varepsilon}(1-\varepsilon)d\left(s-\frac{1-\varepsilon}{g(t)}\right)^{\varepsilon-2},
&s\in(\frac{1}{g(t)},R^N).
\end{cases}
\end{equation}
\end{itemize}
\end{lemma}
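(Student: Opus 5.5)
The plan is a direct verification from the explicit formula \eqref{3-25}, since Lemma \ref{le3.3} is essentially computational. Throughout, write $s_0(t):=\frac{1}{g(t)}$. The hypothesis $g>\frac{1}{R^N}$ gives $s_0(t)\in(0,R^N)$, so both branches are nonempty. On the outer branch $s>s_0(t)$ we have
\[
s-\frac{1-\varepsilon}{g(t)}\geq \frac{\varepsilon}{g(t)}>0 ,
\]
so the power $(\cdot)^\varepsilon$ is smooth there, with all derivatives bounded uniformly on compact time intervals.

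First, the derivative formulas. Differentiating each branch in $s$ and $t$ gives \eqref{3-26}--\eqref{3-28}. In the outer branch, the factor $\varepsilon\cdot\varepsilon^{-\varepsilon}=\varepsilon^{1-\varepsilon}$ appears, and the chain rule applied to $-(1-\varepsilon)/g$ produces $(1-\varepsilon)g'/g^2$.

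Second, the matching at $s=s_0(t)$, where $s-\frac{1-\varepsilon}{g}=\frac{\varepsilon}{g}$:
\begin{itemize}
\item the outer value is $\varepsilon^{-\varepsilon}d\,\varepsilon^\varepsilon g^{-\varepsilon}=dg^{-\varepsilon}$, which equals the inner value $dg^{1-\varepsilon}\cdot g^{-1}$;
\item the outer $s$-derivative is $\varepsilon^{1-\varepsilon}d\,\varepsilon^{\varepsilon-1}g^{1-\varepsilon}=dg^{1-\varepsilon}$, which matches the inner one;
\item the outer $t$-derivative is $\varepsilon^{1-\varepsilon}(1-\varepsilon)d\,\varepsilon^{\varepsilon-1}g^{1-\varepsilon}g'/g^2=(1-\varepsilon)dg^{-\varepsilon}g'\cdot g^{-1}$, which agrees with the inner $t$-derivative evaluated at $s=1/g$.
\end{itemize}
So $\widehat U$ and its first partials are continuous across the moving interface. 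Together with $g\in C^1$, this yields $\widehat U\in C^1([0,R^N]\times[0,T))$.

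Third, the second derivative. $\widehat U_{ss}$ is $0$ on the inner branch and bounded on the outer branch, since there the base is at least $\varepsilon/g$. It therefore has only a jump at $s_0$, which gives $\widehat U(\cdot,t)\in W^{2,\infty}$ and $C^2$ away from $1/g(t)$. Continuity in $t$ with values in $W^{2,\infty}$ should be read in the sense used in \cite{TW}; this is the one point I expect to need care. A jump location moving with $t$ is not continuous in the strong $W^{2,\infty}$ norm, so I would follow \cite{TW} and interpret the membership as local boundedness in time of the $W^{2,\infty}$ norms.

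Finally, the boundary values. $\widehat U(0,t)=0$ is immediate. For the endpoint, monotonicity in $s$ gives
\[
\widehat U(R^N,t)\leq \varepsilon^{-\varepsilon}d\,(R^N)^{\varepsilon}\leq e^{1/e}d\,R^{N\varepsilon},
\]
using $\varepsilon^{-\varepsilon}\leq e^{1/e}$. The definition \eqref{d} then gives
\[
e^{1/e}dR^{N\varepsilon}\leq \frac{\mu_*R^N}{N}\cdot\frac{e^{1/e}R^{N\varepsilon}}{e^{1/e}(R^N+1)+\mu^*+R^{N\varepsilon}} .
\]
To close the argument I need $e^{1/e}R^{N\varepsilon}\leq e^{1/e}(R^N+1)+\mu^*+R^{N\varepsilon}$, and this follows from $R^{N\varepsilon}\leq R^N+1$, valid because $\varepsilon<1$. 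Hence $\widehat U(R^N,t)\leq\frac{\mu_*R^N}{N}$, which completes the proof.
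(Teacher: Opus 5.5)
Your direct verification is correct and follows essentially the paper's route. The paper gives no argument of its own and only cites \cite[Lemma 3.3]{TW}; every listed property follows from exactly the computations you give. These are the matching of values and first partials at $s=1/g(t)$, the boundedness of $\widehat U_{ss}$ from $s-\frac{1-\varepsilon}{g}\geq\frac{\varepsilon}{g}$, and the estimate $\widehat U(R^N,t)\leq e^{1/e}dR^{N\varepsilon}\leq\frac{\mu_*R^N}{N}$ via $R^{N\varepsilon}\leq R^N+1$.

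Your caveat is also justified. $\widehat U_{ss}(\cdot,t)$ jumps by $(1-\varepsilon)\varepsilon^{-1}dg^{2-\varepsilon}(t)$ at the moving point $s=1/g(t)$, so $t\mapsto\widehat U(\cdot,t)$ is not norm-continuous into $W^{2,\infty}((0,R^N))$ once $g$ is strictly increasing, as it is in Lemma \ref{le3.13}. However, Lemma \ref{le3.2} only uses $\widehat U(\cdot,t)\in W^{2,\infty}_{loc}((0,R^N))$ for each fixed $t$, and you do establish that.
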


\begin{lemma}\label{le3.4}
Let $N\geq1$, $R>0$ and $\mu_*,~\mu^*>0$ hold.
For each $\varepsilon\in(0,\frac{1}{4})$, we propose the function $\widehat{W}(s)$ as
\begin{equation}\label{3-29}
\widehat{W}(s)=ds^\varepsilon,
\quad\text{for all }s\in[0,R^N],
\end{equation}
where $d$ is shown in \eqref{d}.
Then we yield that
$$\widehat{W},~\widehat{W}_t\in C^0\left([0,R^N]\times[0,T)\right),$$
and
\begin{equation*}
\widehat{W}(0,t)=0
\quad\text{and}\quad
\widehat{W}(R^N,t)\leq\frac{\mu_* R^N}{N},
\quad\text{for all}~t\in(0,T),
\end{equation*}
as well as
\begin{equation}\label{3-30}
\widehat{W}_{t}(s,t)\equiv0,
\quad\text{for all }t\in[0,T),~s\in[0,R^N].
\end{equation}
\end{lemma}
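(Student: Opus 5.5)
The statement of Lemma \ref{le3.4} is elementary: $\widehat{W}(s)=ds^\varepsilon$ is independent of $t$. The plan is to check the three assertions directly, using only the explicit formula and the definition of $d$ in \eqref{d}.

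\emph{Regularity and the time derivative.} Since $\varepsilon\in(0,\frac14)$, the map $s\mapsto ds^\varepsilon$ is continuous on $[0,R^N]$, with value $0$ at $s=0$. Regarded as a function of $(s,t)$ that is constant in $t$, it is therefore continuous on $[0,R^N]\times[0,T)$. Its time derivative vanishes identically, which gives \eqref{3-30}, and the zero function is trivially continuous. This settles $\widehat{W},\widehat{W}_t\in C^0([0,R^N]\times[0,T))$. We only need continuity in $s$, not differentiability at $s=0$, since by the remark in the proof of Lemma \ref{le3.2} the operators $\mathcal{P}$ and $\mathcal{Q}$ contain no $s$-derivatives of $W$.

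\emph{Boundary values.} At $s=0$ we have $\widehat{W}(0,t)=d\cdot0^\varepsilon=0$. At $s=R^N$ we need
\[
dR^{N\varepsilon}\le \frac{\mu_*R^N}{N}.
\]
By \eqref{d},
\[
dR^{N\varepsilon}=\frac{\mu_*R^N}{N}\cdot\frac{R^{N\varepsilon}}{e^{\frac1e}(R^N+1)+\mu^*+R^{N\varepsilon}}.
\]
The second factor is strictly less than $1$, because the denominator exceeds $R^{N\varepsilon}$ by the positive quantity $e^{\frac1e}(R^N+1)+\mu^*$. This yields the desired bound, in fact with strict inequality. The summand $R^{N\varepsilon}$ in the denominator of \eqref{d} was evidently included for exactly this purpose.

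No step poses a real obstacle. The only point needing attention is the endpoint estimate, and it reduces to the positivity of $\mu^*$ and of $e^{\frac1e}(R^N+1)$.
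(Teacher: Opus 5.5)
Your proposal is correct and is the direct verification the paper leaves implicit, since it states Lemma \ref{le3.4} without proof. The only nontrivial point is $dR^{N\varepsilon}<\frac{\mu_*R^N}{N}$, and it follows exactly as you show from the summand $R^{N\varepsilon}$ in the denominator of \eqref{d}.
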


\begin{lemma}\label{le3.5}
Let $N\geq1$, $R>0$, $\mu_*,~\mu^*>0$ and $\varepsilon\in(0,\frac{1}{4})\cap(0,\frac{1}{N})$. Then we conclude that
\begin{equation}\label{3-31}
\widehat{W}(s)-eb(s)\geq\frac{d}{2}s^\varepsilon,
\quad\text{for all }s\in[0,s_*],
\end{equation}
where $b(s)$, $d$ and $\widehat{W}$ are as in \eqref{3-11}, \eqref{d} and Lemma \ref{le3.4} separately, and
\begin{align}\label{3-32}
s_*:=\min\Biggl\{1,
\left(\frac{d}{2e\mu^*}\right)^{\frac{1}{1-\varepsilon}},
\left(\frac{d}{6e\mu^*}\right)^{\frac{1}{\frac{3}{4}-\varepsilon}},
\left(\frac{d}{2e\mu^*}\right)^{\frac{1}{\frac{2}{N}-\varepsilon}}\Biggr\}.
\end{align}
\end{lemma}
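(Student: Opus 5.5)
We prove \eqref{3-31} dimension by dimension. In each case we show that $e\,b(s)\le \frac d2 s^\varepsilon$ on $[0,s_*]$. Since $\widehat W(s)=ds^\varepsilon$, this gives the claim at once. At $s=0$ both sides vanish, so we fix $s\in(0,s_*]$. Throughout we use $s\le s_*\le 1$. This means that for exponents $a\ge b$ we have $s^{a}\le s^{b}$, and that $s^{a-\varepsilon}$ is increasing in $s$ whenever $a>\varepsilon$.

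For $N=1$ we have $b(s)=\mu^*s$. We need $e\mu^* s^{1-\varepsilon}\le d/2$, which is equivalent to $s\le (d/(2e\mu^*))^{1/(1-\varepsilon)}$. This holds because $s\le s_*$.

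For $N\ge3$ we have $b(s)=\mu^*s^{2/N}$, and $\frac2N-\varepsilon>0$ because $\varepsilon<\frac1N$. The condition $e\mu^*s^{2/N-\varepsilon}\le d/2$ is then exactly $s\le (d/(2e\mu^*))^{1/(\frac2N-\varepsilon)}$, which is the last entry of \eqref{3-32}.

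The case $N=2$ takes slightly more care, since $b(s)=\mu^*(s+s^{3/4}+s^{3/2})$ has three terms. Because $s\le1$, we have $s^{3/2}\le s\le s^{3/4}$. Hence $e\,b(s)\le 3e\mu^* s^{3/4}$, and it suffices to have $3e\mu^* s^{3/4-\varepsilon}\le d/2$. Here $\frac34-\varepsilon>\frac12>0$ since $\varepsilon<\frac14$. So the requirement is $s\le (d/(6e\mu^*))^{1/(\frac34-\varepsilon)}$, which is the third entry of \eqref{3-32}. Alternatively, one could estimate the three terms separately using the $\frac{1}{1-\varepsilon}$ entry together with the bound $s\le1$.

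The only point needing care is that every exponent $1-\varepsilon$, $\frac34-\varepsilon$, $\frac2N-\varepsilon$ is positive. This is guaranteed by $\varepsilon\in(0,\frac14)\cap(0,\frac1N)$, and it makes each power of $s$ monotone increasing. The minimum with $1$ in $s_*$ is used only to compare the powers of $s$ when $N=2$. Combining the three cases gives $\widehat W(s)-eb(s)\ge ds^\varepsilon-\frac d2 s^\varepsilon=\frac d2 s^\varepsilon$ on $[0,s_*]$.
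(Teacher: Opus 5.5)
Your argument is correct and follows the paper's proof. For $N=2$ you use $s\le 1$ to get $s+s^{3/4}+s^{3/2}\le 3s^{3/4}$ and then apply the $\frac{1}{\frac34-\varepsilon}$ entry of $s_*$, and for $N=1$ and $N\ge 3$ you read the bound off the matching entry directly (you also correctly conclude $\geq \frac d2 s^\varepsilon$, where the paper's display ends with the typo ``$\geq 0$''). The one slip is your ``alternatively'' remark for $N=2$: it fails as stated, because the $s^{3/4}$ term is not controlled by the $\frac{1}{1-\varepsilon}$ entry (there $s^{3/4}\ge s$), and three separate bounds of size $\frac d2 s^\varepsilon$ would overshoot, so drop it.
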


\begin{proof}
For $N=2$, using \eqref{3-11}, \eqref{3-29} and \eqref{3-32}, we derive that
\begin{align*}
\widehat{W}(s,t)-eb(s)
=&ds^\varepsilon
-e\mu^*\left(s+s^{\frac{3}{4}}+s^{\frac{3}{2}}\right)
\\
\geq&\frac{d}{2}s^\varepsilon
+s^\varepsilon(\frac{d}{2}-3e\mu^*s^{\frac{3}{4}-\varepsilon})
\\
\geq&\frac{d}{2}s^\varepsilon
+s^\varepsilon(\frac{d}{2}-3e\mu^*s_*^{\frac{3}{4}-\varepsilon})
\\
\geq&0,
\quad\text{for all}~s\in[0,s_*].
\end{align*}
Similarly, we can obtain the conclusion for $N=1$ and $N\geq3$.
\end{proof}

For any fixed $\theta>0$ and all $s\in[0,R^N]$, $t\in[0,T)$, we introduce
\begin{equation}\label{3-33}
\underline{U}(s,t)
:=e^{-\theta t}\widehat{U}(s,t)
\quad\text{and}\quad
\underline{W}(s,t)
:=e^{-\theta t}\widehat{W}(s),
\end{equation}
which will be shown to be a subsolution to \eqref{wen}.

\subsection{Subsolution properties: $\mathcal{P}[\underline{U},\underline{W}](s,t)\leq 0$.}

\begin{lemma}\label{le3.6}
Let $N\geq1$, $R>0$, $T>0$, $\varepsilon\in(0,\frac{1}{4})\cap(0,\frac{1}{N})$,  and let $D\in C^3([0,\infty))$ fulfill \eqref{D}. Then for any $\theta>0$, the functions $\underline{U}$, $\underline{W}$ and $\widehat{U}$, $\widehat{W}$ as in \eqref{3-33} and  Lemma \ref{le3.3}, Lemma \ref{le3.4} satisfy
\begin{equation}\label{3-34}
\mathcal{P}[\underline{U},\underline{W}](s,t)
\leq e^{-\theta t}\cdot
\begin{cases}
\mathcal{P}_1[\widehat{U},\widehat{W}](s,t),
&\sigma\geq 0,
\\
\mathcal{P}_2[\widehat{U},\widehat{W}](s,t),
&\sigma<0,
\end{cases}
\end{equation}
for all $t\in(0,T)\cap(0,\frac{1}{\theta})$ and $s\in[0,s_*]$,
where
\begin{align}\label{P1}
\mathcal{P}_1[\widehat{U},\widehat{W}](s,t)
=\widehat{U}_t
-\theta\widehat{U}-N^2s^{2-\frac{2}{N}}D(Ne^{-\theta t}\widehat{U}_s)\widehat{U}_{ss}
-\frac{Nd}{2e}\widehat{U}_s s^\varepsilon,
\end{align}
and
\begin{align}\label{P2}
\mathcal{P}_2[\widehat{U},\widehat{W}](s,t)
=&\widehat{U}_t-\theta\widehat{U}-N^2s^{2-\frac{2}{N}}D(Ne^{-\theta t}\widehat{U}_s)\widehat{U}_{ss}
\nonumber
\\
&-\frac{Nd}{2e}\widehat{U}_s s^\varepsilon\cdot
\begin{cases}
C_9,
&N=1,
\\
C_{10} s^{2\sigma(\frac{1}{N}-1)},
&N\geq2.
\end{cases}
\end{align}
Here, $C_9,~C_{10}>0$ are constants, together with $d$ and $s_*$ are defined in \eqref{d} and \eqref{3-32}, respectively.
\end{lemma}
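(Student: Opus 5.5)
Substituting the definitions \eqref{3-33} into \eqref{3-8} is the whole plan. Since $\underline{U}_t=e^{-\theta t}(\widehat{U}_t-\theta\widehat{U})$, $\underline{U}_s=e^{-\theta t}\widehat{U}_s$ and $\underline{U}_{ss}=e^{-\theta t}\widehat{U}_{ss}$, we get
\begin{align*}
\mathcal{P}[\underline{U},\underline{W}](s,t)
=e^{-\theta t}\Bigl\{&\widehat{U}_t-\theta\widehat{U}-N^2s^{2-\frac{2}{N}}D(Ne^{-\theta t}\widehat{U}_s)\widehat{U}_{ss}
\\
&-N\widehat{U}_s\left(e^{-\theta t}\widehat{W}(s)-b(s)\right)\left(1+|\partial_rv(r,t)|^2\right)^\sigma\Bigr\}.
\end{align*}
The first three terms are exactly the common part of $\mathcal{P}_1$ and $\mathcal{P}_2$. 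Everything therefore reduces to bounding the chemotactic term from below by $\frac{Nd}{2e}\widehat{U}_s s^\varepsilon$, times the appropriate factor.

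\textbf{Step 1.} For $t<\frac{1}{\theta}$ we have $e^{-\theta t}\geq e^{-1}$. Since $\widehat{W}\geq0$ and $b\geq0$,
\[
e^{-\theta t}\widehat{W}-b\geq e^{-1}\widehat{W}-b=e^{-1}\bigl(\widehat{W}-eb\bigr)\geq\frac{d}{2e}s^\varepsilon
\qquad\text{on }[0,s_*]
\]
by Lemma \ref{le3.5}. Combined with $\widehat{U}_s\geq0$ (see \eqref{3-27}), this gives
\[
N\widehat{U}_s\bigl(e^{-\theta t}\widehat{W}-b\bigr)(1+|v_r|^2)^\sigma\geq\frac{Nd}{2e}\widehat{U}_s s^\varepsilon(1+|v_r|^2)^\sigma .
\]

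\textbf{Step 2: $\sigma\geq0$.} Here $(1+|v_r|^2)^\sigma\geq1$, so the chemotactic term is at least $\frac{Nd}{2e}\widehat{U}_s s^\varepsilon$. This yields $\mathcal{P}_1$.

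\textbf{Step 3: $\sigma<0$.} This is the main obstacle: we now need an \emph{upper} bound on $|v_r|$, since $(1+|v_r|^2)^\sigma\geq(1+\sup|v_r|^2)^{\sigma}$.
\begin{itemize}
\item \emph{Case $N=1$.} Use $v_{rr}=v-w$ together with $v_r=0$ at the boundary/origin to write $v_r(r)=\int_0^r(v-w)$. Then $|v_r|\leq\|v\|_{L^1}+\|w\|_{L^1}\leq c\,M^*$, using \eqref{3-19}, \eqref{3-1} and \eqref{3-20}. Hence $(1+|v_r|^2)^\sigma\geq C_9:=(1+c^2M^{*2})^\sigma$.
\item \emph{Case $N\geq2$.} Apply Lemma \ref{le2.3} with $h=w\geq0$ and $\|w\|_{L^1}\leq 2M^*$. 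This gives $|v_r|\leq c\,r^{1-N}$ for $r\leq R$; for $N=2$ this follows from \eqref{2-6}, for $N\geq3$ from \eqref{2-8}. Since $s\leq s_*\leq1$ and $r\leq1$, we get $1+|v_r|^2\leq(1+c^2)r^{2-2N}$. Because $\sigma<0$,
\[
(1+|v_r|^2)^\sigma\geq(1+c^2)^\sigma r^{2\sigma(1-N)}=C_{10}\,s^{2\sigma(\frac{1}{N}-1)} .
\]
\end{itemize}
In both cases $C_9$ and $C_{10}$ depend only on $N$, $R$, $M^*$ and $\sigma$.

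Inserting these lower bounds, again with $\widehat{U}_s\geq0$, gives $\mathcal{P}\leq e^{-\theta t}\mathcal{P}_2$. The only care needed is that at $s=\frac{1}{g(t)}$ the inequality holds for a.e.\ $s$, consistent with the $W^{2,\infty}$ framework of Lemma \ref{le3.2}.
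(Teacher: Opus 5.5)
Your proposal is correct and follows the paper's own argument. You substitute \eqref{3-33} into \eqref{3-8} and use $e^{-\theta t}\geq e^{-1}$ with Lemma \ref{le3.5} to bound $e^{-\theta t}\widehat{W}-b(s)$ below by $\frac{d}{2e}s^\varepsilon$. For $\sigma<0$ you turn the gradient bounds of Lemma \ref{le2.3}, or a uniform bound on $|v_r|$ when $N=1$, into the lower bounds $C_9$ and $C_{10}s^{2\sigma(\frac{1}{N}-1)}$ for $(1+|v_r|^2)^\sigma$. The differences are cosmetic: you get the $N=1$ gradient bound by integrating $v_{rr}=v-w$ directly instead of quoting an external estimate, and you write out the $\sigma\geq0$ case, which the paper only calls analogous.
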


\begin{proof}
We restrict our proof to the case $\sigma<0$, and the conclusion for $\sigma\geq 0$ follows analogously.
For $N=1$, according to \citep[Remark 2.1]{NT}, we deduce that
\begin{equation}\label{3-37}
\left|\partial_rv(r,t)\right|\leq c_1,
\quad\text{for all } t\in(0,T)\text{ and }s\in[0,R^N].
\end{equation}
By means of \eqref{3-8}, \eqref{3-10}, \eqref{3-11}, \eqref{3-31}, \eqref{3-33}, \eqref{P2}, \eqref{3-37}, $\sigma<0$, $\theta>0$, Lemma \ref{le2.3}, and the nonnegativity of $\widehat{U}_s$ and $\widehat{W}$, we achieve that
\begin{align*}
&\mathcal{P}[\underline{U},\underline{W}](s,t)
\nonumber
\\
=&\underline{U}_{t}
-N^2s^{2-\frac{2}{N}}D(N\underline{U}_{s})\underline{U}_{ss}
-N\underline{U}_{s}\left(\underline{W}-b(s)\right)\cdot(1+\left|\partial_rv(r,t)\right|^2)^\sigma
\nonumber
\\
=&e^{-\theta t}\cdot\Biggl\{\widehat{U}_t-\theta\widehat{U}-N^2s^{2-\frac{2}{N}}D(Ne^{-\theta t}\widehat{U}_s)\widehat{U}_{ss}
\nonumber
\\
&-N\widehat{U}_s\left(e^{-\theta t}\widehat{W}-b(s)\right)\cdot(1+\left|\partial_rv(r,t)\right|^2)^\sigma\Biggr\}
\nonumber
\\
\leq&e^{-\theta t}\cdot\Biggl\{\widehat{U}_t-\theta\widehat{U}-N^2s^{2-\frac{2}{N}}D(Ne^{-\theta t}\widehat{U}_s)\widehat{U}_{ss}\Biggr\}
\nonumber
\\
&-e^{-\theta t}N\widehat{U}_se^{-1}\left(\widehat{W}-eb(s)\right)\cdot
\begin{cases}
c_2^\sigma,
&N=1,
\\
\left(1+c_3r^{-2}\right)^\sigma,
&N=2,
\\
\left(1+c_4r^{2-2N}\right)^\sigma,
&N\geq3,
\end{cases}
\nonumber
\\
\leq&e^{-\theta t}\cdot\Biggl\{\widehat{U}_t-\theta\widehat{U}-N^2s^{2-\frac{2}{N}}D(Ne^{-\theta t}\widehat{U}_s)\widehat{U}_{ss}\Biggr\}
\nonumber
\\
&-e^{-\theta t}\frac{Nd}{2e}\widehat{U}_s s^\varepsilon\cdot
\begin{cases}
c_2^\sigma,
&N=1,
\\
c_5^\sigma s^{2\sigma(\frac{1}{N}-1)},
&N\geq2,
\end{cases}
\nonumber
\\
=&e^{-\theta t}\cdot\mathcal{P}_2[\widehat{U},\widehat{W}](s,t),
\quad\text{for all}~t\in(0,T)\cap(0,\frac{1}{\theta}) \text{ and }s\in[0,s_*].
\end{align*}
\end{proof}

\subsubsection{$\mathcal{P}[\underline{U},\underline{W}](s,t)\leq 0$ for $\sigma\geq 0$ in inner and intermediate interval.}\label{sub3.3.1}

\begin{lemma}\label{le3.7}
Let $N\geq1$, $R>0$, $\varepsilon\in(0,\frac{1}{4})\cap(0,\frac{1}{N})$ and $\sigma\geq 0$. Suppose that $D\in C^3([0,\infty))$ satisfies \eqref{D} and that $s_*$ is shown in \eqref{3-32}.
Then for any fixed $\mu_*,~\mu^*>0$, one can select
\begin{equation}\label{3-38}
g_*>\max\Biggl\{\frac{1}{R^N},~\frac{1}{s_*}\Biggr\},
\end{equation}
and $\gamma_*>0$ satisfying the properties below:\\
If $T>0$ and $g\in C^1([0,T))$ satisfy
\begin{equation}\label{3-39}
g(t)\geq g_*,
\quad\text{for all~}t\in(0,T),
\end{equation}
and
\begin{equation}\label{3-40}
0\leq g'(t)\leq\gamma_* g^{1+(1-\varepsilon)}(t),
\quad\text{for all~}t\in(0,T),
\end{equation}
then for any
\begin{equation}\label{3-41}
\theta>0,
\end{equation}
the functions $\underline{U}$ and $\underline{W}$ defined in \eqref{3-33} satisfy
\begin{equation}\label{3-42}
\mathcal{P}[\underline{U},\underline{W}](s,t)
\leq0,
\quad\text{for all}~t\in(0,T)\cap(0,\frac{1}{\theta})
~\text{and}~s\in(0,\frac{1}{g(t)}).
\end{equation}
\end{lemma}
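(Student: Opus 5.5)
The plan is to reduce everything to Lemma \ref{le3.6} and then check the sign of $\mathcal{P}_1$ directly on the inner interval. By \eqref{3-38} and \eqref{3-39} we have $\frac{1}{g(t)}\le\frac{1}{g_*}<s_*$, so every $s\in(0,\frac{1}{g(t)})$ lies in $[0,s_*]$. Since $\sigma\ge0$, Lemma \ref{le3.6} therefore gives $\mathcal{P}[\underline{U},\underline{W}](s,t)\le e^{-\theta t}\mathcal{P}_1[\widehat{U},\widehat{W}](s,t)$ for all $t\in(0,T)\cap(0,\frac1\theta)$. It thus suffices to show $\mathcal{P}_1[\widehat{U},\widehat{W}]\le0$ for $s\in(0,\frac{1}{g(t)})$. (The relevant estimate in Lemma \ref{le3.6} for $\sigma\ge0$ is simply $(1+|\partial_rv|^2)^\sigma\ge1$, combined with \eqref{3-31} and $e^{-\theta t}\ge e^{-1}$.)

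On this interval, \eqref{3-26}--\eqref{3-28} give $\widehat{U}_{ss}=0$, so the diffusion term vanishes identically. This is why no growth condition on $D$ is needed here. Dropping the nonpositive term $-\theta\widehat{U}$, we obtain
\begin{equation*}
\mathcal{P}_1[\widehat{U},\widehat{W}](s,t)\le(1-\varepsilon)dg^{-\varepsilon}g's-\frac{Nd}{2e}dg^{1-\varepsilon}s^{1+\varepsilon}\cdot s^{-1}\cdot s
=dg^{-\varepsilon}s\Bigl\{(1-\varepsilon)g'-\frac{Nd}{2e}g\,s^{\varepsilon}\Bigr\}.
\end{equation*}
Here we used $\widehat{U}_s s^\varepsilon=dg^{1-\varepsilon}s^\varepsilon$ and wrote $dg^{1-\varepsilon}s^\varepsilon=dg^{-\varepsilon}s\cdot g s^{\varepsilon-1}$. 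So the chemotactic term equals $dg^{-\varepsilon}s\cdot\frac{Nd}{2e}gs^{\varepsilon-1}$.

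Since $\varepsilon<1$, the map $s\mapsto s^{\varepsilon-1}$ is decreasing. Hence on $(0,\frac1g)$ we have $gs^{\varepsilon-1}\ge g\cdot g^{1-\varepsilon}=g^{2-\varepsilon}$. It therefore suffices that $(1-\varepsilon)g'\le\frac{Nd}{2e}g^{2-\varepsilon}$. By \eqref{3-40}, $g'\le\gamma_*g^{2-\varepsilon}$, so the choice $\gamma_*:=\frac{Nd}{2e}$ (or anything smaller, say $\gamma_*\le\frac{Nd}{2e(1-\varepsilon)}$) closes the argument. The constant $g_*$ only needs to satisfy \eqref{3-38}.

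The main subtlety is the precise exponent bookkeeping, namely that the weight $s^\varepsilon$ coming from Lemma \ref{le3.5} still yields the power $g^{2-\varepsilon}$ matching \eqref{3-40}. The monotonicity of $s^{\varepsilon-1}$ handles this, so we expect no real obstacle in the inner region.

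The statement only claims \eqref{3-42} on the inner interval $(0,\frac1{g(t)})$. The intermediate region $(\frac1g,s_*)$, where $\widehat{U}_{ss}<0$ and \eqref{1-8} is needed, is presumably treated in the next lemma.
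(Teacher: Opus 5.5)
Your proposal is correct and follows essentially the same route as the paper's proof. You reduce to $\mathcal{P}_1$ via Lemma~\ref{le3.6} (valid since $\frac{1}{g(t)}\le\frac{1}{g_*}<s_*$), use $\widehat{U}_{ss}=0$ and $-\theta\widehat{U}\le 0$ on $(0,\frac{1}{g(t)})$, bound $g\,s^{\varepsilon-1}\ge g^{2-\varepsilon}$ there, and take $\gamma_*=\frac{Nd}{2e}$. One cosmetic slip: in your displayed inequality the bracket should contain $\frac{Nd}{2e}g\,s^{\varepsilon-1}$ rather than $\frac{Nd}{2e}g\,s^{\varepsilon}$, and the middle expression should have $s^{\varepsilon}$ in place of $s^{1+\varepsilon}$; your next sentence already states this correctly.
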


\begin{proof}
Making use of Lemma \ref{le3.3}, \eqref{P1}, \eqref{3-41} and $\varepsilon<1$, we estimate that
\begin{align}\label{3-43}
\mathcal{P}_1[\widehat{U},\widehat{W}](s,t)
=&\widehat{U}_t
-\theta\widehat{U}-N^2s^{2-\frac{2}{N}}D(Ne^{-\theta t}\widehat{U}_s)\widehat{U}_{ss}
-\frac{Nd}{2e}\widehat{U}_s s^\varepsilon
\nonumber
\\
\leq&\widehat{U}_t
-\frac{Nd}{2e}\widehat{U}_s s^\varepsilon
\nonumber
\\
\leq&dg^{-\varepsilon}(t)s\cdot \left(g'(t)
-\frac{Nd}{2e}g(t)s^{\varepsilon-1}\right)
\nonumber
\\
\leq&dg^{-\varepsilon}(t)s\cdot \left(g'(t)
-\frac{Nd}{2e}g^{1+(1-\varepsilon)}(t)\right),
\end{align}
for all $t\in(0,T)$ and $s\in\left(0,\frac{1}{g(t)}\right)$.
Let $\gamma_*:=\frac{Nd}{2e}$ and
use \eqref{3-34}, \eqref{3-38}, \eqref{3-40}, \eqref{3-43} to arrive at \eqref{3-42}.
\end{proof}

\begin{lemma}\label{le3.8}
Let $N\geq1$, $R>0$ and $\sigma\geq 0$. Suppose that $D\in C^3([0,\infty))$ satisfies \eqref{D}, \eqref{1-8} with parameters $\xi_0>0$, $K_D>0$, and exponent $m\in\mathbb{R}$ fulfilling
\begin{equation}\label{3-44}
m<2-\frac{2}{N}.
\end{equation}
Then there exists a $\varepsilon_*\in(0,\frac{1}{4})\cap(0,\frac{1}{N})$ such that for every $\varepsilon\in(0,\varepsilon_*)$ and $\mu_*,~\mu^*>0$, we can choose  $s_{**}\in(0,1]$ and $\gamma_{**}>0$ with the properties as follows:\\
Assume that $T>0$ and $g\in C^1([0,T))$ fulfill
\begin{equation}\label{3-45}
g(t)\geq g_*,
\quad\text{for all}~t\in(0,T),
\end{equation}
and
\begin{equation}\label{3-46}
0\leq g'(t)
\leq\gamma_{**}g^{1+(1-\varepsilon)}(t),
\quad\text{for all}~t\in(0,T),
\end{equation}
where $g_*$ is given in Lemma \ref{le3.7}.
Then for any
\begin{equation}\label{3-47}
\theta>0,
\end{equation}
the functions $\underline{U}$ and $\underline{W}$ defined in \eqref{3-33} satisfy
\begin{equation}\label{3-48}
\mathcal{P}[\underline{U},\underline{W}](s,t)
\leq0,
\end{equation}
for all $t\in(0,T)\cap(0,\frac{1}{\theta})$ and $s\in(\frac{1}{g(t)},R^N)\cap(0,s_{**})$.
\end{lemma}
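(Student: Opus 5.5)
The plan is to reduce the claim, via Lemma \ref{le3.6} (case $\sigma\geq0$), to showing $\mathcal{P}_1[\widehat{U},\widehat{W}](s,t)\leq0$ in the outer part $s\in(\frac1{g(t)},R^N)\cap(0,s_{**})$. We always take $s_{**}\leq s_*$ so that Lemma \ref{le3.6} applies. Then we bound the three remaining terms using the explicit formulas \eqref{3-26}--\eqref{3-28}. The term $-\theta\widehat{U}\leq0$ is simply dropped. Throughout, write $\hat s:=s-\frac{1-\varepsilon}{g(t)}$ and use the elementary comparison $\varepsilon s\leq\hat s\leq s\leq 1$, valid because $s>\frac1{g(t)}$.

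\emph{Time derivative versus chemotaxis.} By \eqref{3-26} and \eqref{3-46},
\[
\widehat{U}_t\leq\varepsilon^{1-\varepsilon}(1-\varepsilon)d\,\hat s^{\varepsilon-1}\gamma_{**}g^{-\varepsilon}(t)\leq \varepsilon^{1-\varepsilon}d\,\gamma_{**}\,\hat s^{\varepsilon-1}s^{\varepsilon},
\]
where the last step uses $g^{-\varepsilon}<s^{\varepsilon}$. The chemotactic term is
\[
\frac{Nd}{2e}\widehat{U}_s s^\varepsilon=\frac{Nd}{2e}\varepsilon^{1-\varepsilon}d\,\hat s^{\varepsilon-1}s^\varepsilon .
\]
Choosing $\gamma_{**}\leq\frac{Nd}{4e}$ makes $\widehat{U}_t$ at most half of it.

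\emph{Diffusion versus chemotaxis (main step).} Here $-\widehat{U}_{ss}>0$, so we need an upper bound on $D(\xi)$ with $\xi=Ne^{-\theta t}\varepsilon^{1-\varepsilon}d\,\hat s^{\varepsilon-1}$. Since $t<\frac1\theta$ we have $e^{-1}\leq e^{-\theta t}\leq1$. Hence $\xi$ is comparable to $\hat s^{\varepsilon-1}$ from both sides, with constants depending on $\varepsilon,d,N$. In particular $\xi\geq Ne^{-1}\varepsilon^{1-\varepsilon}d\,s^{\varepsilon-1}>\xi_0$ once $s_{**}$ is small. Then \eqref{1-8}, applied with the appropriate one-sided comparison according to the sign of $m-1$, gives $D(\xi)\leq c_1\hat s^{(\varepsilon-1)(m-1)}$. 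Consequently
\[
-N^2s^{2-\frac2N}D(\cdot)\widehat{U}_{ss}\leq c_2\,s^{2-\frac2N}\hat s^{\varepsilon-2+(\varepsilon-1)(m-1)}.
\]
Dividing by the chemotactic term $\sim\hat s^{\varepsilon-1}s^\varepsilon$ leaves $s^{2-\frac2N-\varepsilon}\hat s^{-m+\varepsilon(m-1)}$. Using $\hat s\geq\varepsilon s$ or $\hat s\leq s$ according to the sign of the exponent, this is at most $c_3(\varepsilon)s^{\kappa}$ with
\[
\kappa=2-\tfrac2N-m-\varepsilon(2-m).
\]
By \eqref{3-44}, $\kappa>0$ for $\varepsilon<\varepsilon_*$, where $\varepsilon_*$ is chosen small, e.g.\ $\varepsilon_*(2-m)_+<\frac12(2-\frac2N-m)$. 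Then shrinking $s_{**}$ so that $c_3s_{**}^\kappa\leq\frac12\cdot\frac{Nd}{2e}$ absorbs the diffusion into the other half of the chemotactic term. This gives $\mathcal{P}_1\leq0$ and hence \eqref{3-48}.

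The delicate point is the diffusion step. It requires tracking the two regimes $m\geq1$ and $m<1$ in the bound for $D$, as well as the sign of the $\hat s$-exponent. It also requires ensuring that all constants depend only on $\varepsilon,d,N,K_D,\xi_0$ and not on $g$ or $\theta$, which is why only the bounds $e^{-1}\leq e^{-\theta t}\leq 1$ and $\varepsilon s\leq\hat s\leq s$ are used.
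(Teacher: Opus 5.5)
Your proposal is correct and matches the paper's proof. The paper also reduces to $\mathcal{P}_1$ via Lemma~\ref{le3.6} with $s_{**}\leq s_*$ and splits the chemotactic term $\frac{Nd}{2e}\widehat{U}_s s^\varepsilon$ into halves. It absorbs $\widehat{U}_t$ by the smallness of $\gamma_{**}$ (using $s^{-\varepsilon}<g^{\varepsilon}$), and absorbs the diffusion by the smallness of $s_{**}$, using $\xi>\xi_0$, a bound on $e^{-(m-1)\theta t}$ according to the sign of $m-1$, $\varepsilon s\leq s-\frac{1-\varepsilon}{g(t)}\leq s$, and positivity of exactly your exponent $\kappa=-\frac{2}{N}+(\varepsilon-1)(m-2)$.
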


\begin{proof}
Applying \eqref{3-44}, we can choose a sufficiently small $\varepsilon_*\in(0,\frac{1}{4})\cap(0,\frac{1}{N})$ such that
\begin{equation}\label{3-49}
-\frac{2}{N}+(\varepsilon-1)(m-2)>0,
\quad\text{for all }\varepsilon\in(0,\varepsilon_*).
\end{equation}
For each chosen $\varepsilon\in(0,\varepsilon_*)$, $\mu_*,~\mu^*>0$, as well as $d$ and $s_*$ taken from \eqref{d} and \eqref{3-32} separately, we use \eqref{3-49} and $\varepsilon<1$ to pick $s_{**}\in(0,1]$ and $\gamma_{**}>0$ properly small such that
\begin{equation}\label{3-50}
s_{**}\leq s_*,
\end{equation}
and
\begin{equation}\label{3-51}
\frac{N\varepsilon^{1-\varepsilon}ds_{**}^{\varepsilon-1}}{e}
>\xi_0,
\end{equation}
and
\begin{align}\label{3-52}
2N^mK_D\left(e+e^{2-m}\right)d^{m-2}(1-\varepsilon)\varepsilon^{(1-\varepsilon)(m-1)}
c_6(\varepsilon)\cdot s_{**}^{-\frac{2}{N}+(\varepsilon-1)(m-2)}
\leq\frac{1}{2},
\end{align}
as well as
\begin{equation}\label{3-53}
\frac{2e(1-\varepsilon)}{Nd}\gamma_{**}
\leq\frac{1}{2},
\end{equation}
with
\begin{equation}\label{3-54}
c_6(\varepsilon):=\max
\begin{Bmatrix}
1,\varepsilon^{(\varepsilon-1)(m-2)+\varepsilon-2}
\end{Bmatrix}.
\end{equation}
Note that
\begin{equation}\label{3-55}
s\geq
s-\frac{1-\varepsilon}{g(t)}
\geq s-(1-\varepsilon)s
=\varepsilon s,
\quad\text{for~all}~t\in(0,T)\text{~and~}
s\in(\frac{1}{g(t)},R^N).
\end{equation}
On the basis of \eqref{3-27}, \eqref{3-51}, \eqref{3-55}, as well as $\varepsilon<1$, we conclude that
\begin{equation}\label{3-56}
Ne^{-\theta t}\widehat{U}_s
\geq\frac{N}{e}\widehat{U}_s
=\frac{N}{e}\cdot\varepsilon^{1-\varepsilon}d\left(s-\frac{1-\varepsilon}{g(t)}\right)^{\varepsilon-1}
\geq\frac{N}{e}\cdot\varepsilon^{1-\varepsilon}ds_{**}^{\varepsilon-1}
>\xi_0,
\end{equation}
for all $t\in(0,T)\cap(0,\frac{1}{\theta})$ and $s\in(\frac{1}{g(t)},R^N)\cap(0,s_{**})$.
Due to \eqref{1-8}, \eqref{P1}, \eqref{3-47}, \eqref{3-56}, the fact that $\widehat{U}_{ss}\leq 0$ by \eqref{3-28}, and the nonnegativity of $\widehat{U}$, we assert that
\begin{align}\label{3-57}
\mathcal{P}_1[\widehat{U},\widehat{W}](s,t)
=&\widehat{U}_t
-\theta\widehat{U}-N^2s^{2-\frac{2}{N}}D(Ne^{-\theta t}\widehat{U}_s)\widehat{U}_{ss}
-\frac{Nd}{2e}\widehat{U}_s s^\varepsilon
\nonumber
\\
\leq&\widehat{U}_t
-N^2s^{2-\frac{2}{N}}K_D\cdot\left(Ne^{-\theta t}\widehat{U}_s\right)^{m-1}\widehat{U}_{ss}
-\frac{Nd}{2e}\widehat{U}_s s^\varepsilon,
\end{align}
for all $t\in(0,T)$ and $s\in(\frac{1}{g(t)},R^N)$.
Thanks to \eqref{3-27} and \eqref{3-28}, we find that
\begin{align}\label{3-58}
&\frac{-N^2s^{2-\frac{2}{N}}K_D\cdot(Ne^{-\theta t}\widehat{U}_s)^{m-1}\widehat{U}_{ss}}
{\frac{Nd}{2e}\widehat{U}_s\cdot s^{\varepsilon}}
\nonumber
\\
=&-\frac{2N^mK_De^{1-(m-1)\theta t}}{d}\cdot s^{2-\frac{2}{N}-\varepsilon}\widehat{U}_s^{m-2}\widehat{U}_{ss}
\nonumber
\\
=&2N^mK_De^{1-(m-1)\theta t}d^{m-2}(1-\varepsilon)\varepsilon^{(1-\varepsilon)(m-1)}
\nonumber
\\
&\times s^{2-\frac{2}{N}-\varepsilon}\left(s-\frac{1-\varepsilon}{g(t)}\right)^{(\varepsilon-1)(m-2)+\varepsilon-2},
\end{align}
for all $t\in(0,T)$ and $s\in(\frac{1}{g(t)},R^N)$.
Invoking \eqref{3-49}, \eqref{3-54} and \eqref{3-55}, we see that
\begin{align}\label{3-59}
s^{2-\frac{2}{N}-\varepsilon}\left(s-\frac{1-\varepsilon}{g(t)}\right)^{(\varepsilon-1)(m-2)+\varepsilon-2}
\leq&c_6(\varepsilon)\cdot s^{-\frac{2}{N}+(\varepsilon-1)(m-2)}
\nonumber
\\
\leq&c_6(\varepsilon)\cdot s_{**}^{-\frac{2}{N}+(\varepsilon-1)(m-2)},
\end{align}
for all $\varepsilon\in(0,\varepsilon_*)$, $t\in(0,T)$ and $s\in(\frac{1}{g(t)},R^N)\cap(0,s_{**})$.
Note that
\begin{equation}\label{3-60}
e^{1-(m-1)\theta t}\leq
\begin{cases}
e^{2-m},
&\text{if}~m<1,
\\
e,
&\text{if}~m\geq1,
\end{cases}
\end{equation}
for all $t\in(0,T)\cap(0,\frac{1}{\theta})$.
According to \eqref{3-52} and \eqref{3-58}-\eqref{3-60}, we estimate that
\begin{equation}\label{3-61}
-N^2s^{2-\frac{2}{N}}K_D\cdot(Ne^{-\theta t}\widehat{U}_s)^{m-1}\widehat{U}_{ss}
\leq\frac{1}{2}\cdot\frac{Nd}{2e}\widehat{U}_s\cdot s^{\varepsilon},
\end{equation}
for all $t\in(0,T)\cap(0,\frac{1}{\theta})$ and $s\in(\frac{1}{g(t)},R^N)\cap(0,s_{**})$.
Recalling \eqref{3-26} and \eqref{3-27}, we derive that
\begin{align}\label{3-62}
\frac{\widehat{U}_t}
{\frac{Nd}{2e}\widehat{U}_s\cdot s^{\varepsilon}}
=\frac{2e(1-\varepsilon)}{Nd}
\cdot
s^{-\varepsilon}
\cdot
\frac{g'(t)}{g^2(t)}
\leq\frac{2e(1-\varepsilon)}{Nd}
\cdot g^{\varepsilon-2}(t)g'(t),
\end{align}
for all $t\in(0,T)$ and $s\in(\frac{1}{g(t)},R^N)$.
Due to \eqref{3-46} and \eqref{3-53}, we establish that
\begin{equation}\label{3-63}
\frac{2e(1-\varepsilon)}{Nd}
\cdot g^{\varepsilon-2}(t)g'(t)
\leq\frac{2e(1-\varepsilon)}{Nd}\gamma_{**}
\leq\frac{1}{2},
\end{equation}
for all $t\in(0,T)$.
\eqref{3-62} and \eqref{3-63} imply
\begin{equation}\label{3-64}
\widehat{U}_t
\leq\frac{1}{2}\cdot\frac{Nd}{2e}\widehat{U}_s\cdot s^{\varepsilon},
\quad\text{for~all }t\in(0,T)\text{ and }s\in(\frac{1}{y(t)},R^N).
\end{equation}
In combination with \eqref{3-34}, \eqref{3-57}, \eqref{3-61} and \eqref{3-64} shows \eqref{3-48}.
\end{proof}

\subsubsection{$\mathcal{P}[\underline{U},\underline{W}](s,t)\leq 0$ for $\sigma<0$ in inner and intermediate interval.}\label{sub3.3.2}

\begin{remark}\label{remark 3.1}
{\rm $\mathcal{P}_1$ and $\mathcal{P}_2$ are analogous for $N=1$. Then $\mathcal{P}_2\leq 0$  for $N=1$ follows directly from the method in Section 3.3.1. We thus consider only the case $N\geq 2$ hereinafter.}
\end{remark}

\begin{lemma}\label{le3.9}
Let $N\geq2$, $R>0$, and
\begin{equation}\label{3-65}
\frac{N}{2-2N}<\sigma<0.
\end{equation}
Assume that $D\in C^3([0,\infty))$ satifies \eqref{D}.
Then for any fixed $\mu_*,~\mu^*>0$, there exist $\varepsilon_{**}\in(0,\frac{1}{4})\cap(0,\frac{1}{N})$, $\delta_1>0$ and $\tilde{\gamma}_*>0$, such that
if $T>0$ and $g\in C^1([0,T))$ satisfy
\begin{equation}\label{3-66}
g(t)\geq g_*,
\quad\text{for all~}t\in(0,T),
\end{equation}
and
\begin{equation}\label{3-67}
0\leq g'(t)\leq\tilde{\gamma}_* g^{1+\delta_1}(t),
\quad\text{for all~}t\in(0,T),
\end{equation}
where $g_*$ is defined in Lemma \ref{le3.7},
then for all $\varepsilon\in(0,\varepsilon_{**})$, and
\begin{equation}\label{3-68}
\theta>0,
\end{equation}
the functions $\underline{U}$ and $\underline{W}$ defined in \eqref{3-33} satisfy
\begin{equation}\label{3-69}
\mathcal{P}[\underline{U},\underline{W}](s,t)\leq0,
\quad\text{for all}~t\in(0,T)\cap(0,\frac{1}{\theta})
~\text{and}~s\in(0,\frac{1}{g(t)}).
\end{equation}
\end{lemma}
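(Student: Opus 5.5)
The argument follows Lemma \ref{le3.7}, with \eqref{P2} in place of \eqref{P1}. The key is that the factor $C_{10}s^{2\sigma(\frac1N-1)}$ enlarges the chemotactic gain near $s=0$ when $\sigma<0$, since $2\sigma(\frac1N-1)=\frac{2|\sigma|(N-1)}{N}\cdot(-1)\cdot(-1)$ has the sign that makes it a \emph{negative} power. Precisely, $2\sigma(\frac1N-1)=-2\sigma\frac{N-1}{N}>0$ when $\sigma<0$. So the exponent $\kappa:=2\sigma(\frac1N-1)$ is positive, and the gain is weakened near the origin. The restriction $\sigma>\frac{N}{2-2N}$ is exactly $\kappa<1$. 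This is what will allow the inner-region estimate to close.

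First, by Lemma \ref{le3.6} and $s<\frac1{g(t)}\le\frac1{g_*}<s_*$, it suffices to show $\mathcal{P}_2[\widehat U,\widehat W]\le0$ on $(0,\frac1{g(t)})$. There $\widehat U_{ss}=0$ by \eqref{3-28}, so the diffusion term vanishes, and $-\theta\widehat U\le0$ can be dropped. Using \eqref{3-26}--\eqref{3-27} we get
\begin{equation*}
\mathcal{P}_2[\widehat U,\widehat W]\le dg^{-\varepsilon}(t)s\Bigl(g'(t)-\tfrac{NdC_{10}}{2e}\,g(t)\,s^{\varepsilon+\kappa-1}\Bigr).
\end{equation*}

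Next, choose $\varepsilon_{**}\in(0,\frac14)\cap(0,\frac1N)$ so small that $\varepsilon+\kappa<1$ for all $\varepsilon<\varepsilon_{**}$; this is possible because $\kappa<1$ by \eqref{3-65}. Then $s^{\varepsilon+\kappa-1}$ is decreasing in $s$, so for $s<\frac1{g(t)}$ we have $s^{\varepsilon+\kappa-1}\ge g^{1-\varepsilon-\kappa}(t)$. The bracket is therefore bounded above by $g'(t)-\frac{NdC_{10}}{2e}g^{1+(1-\varepsilon-\kappa)}(t)$.

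The main obstacle is that $\delta_1$ and $\tilde\gamma_*$ must not depend on $\varepsilon$, whereas $1-\varepsilon-\kappa$ and $d$ (through $R^{N\varepsilon}$ in \eqref{d}) do. To handle this, I would take $\delta_1:=1-\varepsilon_{**}-\kappa>0$. Since $g\ge g_*>1$ (we may enlarge $g_*$ so that $g_*\ge1$), we get $g^{1+(1-\varepsilon-\kappa)}\ge g^{1+\delta_1}$ for every $\varepsilon<\varepsilon_{**}$. For $d$, note that $R^{N\varepsilon}\le\max\{1,R^{N/4}\}$, which gives a lower bound $d\ge d_0>0$ uniform in $\varepsilon$. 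Finally, set $\tilde\gamma_*:=\frac{Nd_0C_{10}}{2e}$. Then \eqref{3-67} makes the bracket nonpositive, and \eqref{3-69} follows. One should also check that $C_{10}$ from Lemma \ref{le3.6} does not depend on $\varepsilon$. It does not: it comes only from Lemma \ref{le2.3}, $\mu^*$, and $R$.
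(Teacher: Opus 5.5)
Your argument is correct and is essentially the paper's proof. You reduce via Lemma \ref{le3.6} to $\mathcal{P}_2$, use $\widehat U_{ss}=0$ on $(0,\frac1{g(t)})$, and bound $s^{\varepsilon+\kappa-1}\ge g^{1-\varepsilon-\kappa}$ using $\kappa=2\sigma(\frac1N-1)\in(0,1)$, exactly as in \eqref{3-70}--\eqref{3-71}. The one difference is that the paper's $\delta_1=1-\varepsilon-\kappa$ and $\tilde\gamma_*=\frac{NdC_{10}}{2e}$ depend on $\varepsilon$, whereas your $\delta_1=1-\varepsilon_{**}-\kappa$ and $\tilde\gamma_*=\frac{Nd_0C_{10}}{2e}$ are uniform in $\varepsilon$, as the quantifier order requires. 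For that you must take $\varepsilon_{**}<1-\kappa$ strictly, and no enlargement of $g_*$ is needed since $g_*>1/s_*\ge1$. Your opening remark about the sign of $\kappa$ is garbled, but the corrected claim $0<\kappa<1$ is right.
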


\begin{proof}
According to \eqref{3-65}, there exists a constant $\varepsilon_{**}\in(0,\frac{1}{4})\cap(0,\frac{1}{N})$ such that
\begin{equation}\label{3-70}
1-\varepsilon-2\sigma\left(\frac{1}{N}-1\right):=\delta_1
>0,
\quad\text{for all }\varepsilon\in(0,\varepsilon_{**}).
\end{equation}
We use Lemma \ref{le3.3}, \eqref{P2}, \eqref{3-67}, \eqref{3-68}, \eqref{3-70}, and the nonnegativity of $\widehat{U}$, as well as define $\tilde{\gamma}_*:=\frac{NdC_{10}}{2e}$ to achieve that
\begin{align}\label{3-71}
&\mathcal{P}_2[\widehat{U},\widehat{W}](s,t)
\nonumber
\\
=&\widehat{U}_t-\theta\widehat{U}-N^2s^{2-\frac{2}{N}}D(Ne^{-\theta t}\widehat{U}_s)\widehat{U}_{ss}
-\frac{Nd}{2e}C_{10}\widehat{U}_s\cdot
s^{\varepsilon+2\sigma(\frac{1}{N}-1)}
\nonumber
\\
\leq&\widetilde{U}_t
-\frac{Nd}{2e}C_{10}\widehat{U}_s\cdot
s^{\varepsilon+2\sigma(\frac{1}{N}-1)}
\nonumber
\\
\leq&dg^{-\varepsilon}s\left(g'(t)
-\frac{Nd}{2e}C_{10} g(t)s^{\varepsilon+2\sigma(\frac{1}{N}-1)-1}\right)
\nonumber
\\
\leq&dg^{-\varepsilon}(t)s\left(g'(t)-\frac{Nd}{2e}C_{10}g^{2-2\sigma(\frac{1}{N}-1)-\varepsilon}(t)\right)
\nonumber
\\
\leq&0,
\quad\text{for all } t\in(0,T),~s\in(0,\frac{1}{g(t)})\text{ and any }\varepsilon\in(0,\varepsilon_{**}).
\end{align}
\eqref{3-34} and \eqref{3-71} shows \eqref{3-69}.
\end{proof}

\begin{lemma}\label{le3.10}
Let $N\geq2$ and $R>0$. Assume that $D\in C^3([0,\infty))$ fulfills \eqref{D}, \eqref{1-8} with parameters $\xi_0>0$, $K_D>0$, and exponents $m\in\mathbb{R}$, $\sigma\in\mathbb{R}$ satisfying
\begin{equation}\label{3-72}
m<2-\frac{2}{N},
\end{equation}
and
\begin{equation}\label{3-73}
0>\sigma>\max\{\frac{N}{2-2N},\frac{mN+2-2N}{2N-2}\}.
\end{equation}
Then for any fixed $\mu_*,~\mu^*>0$, there exist $\varepsilon_{***}\in(0,\frac{1}{4})\cap(0,\frac{1}{N})$, $\delta_2>0$, $\tilde{s}_{**}\in(0,1]$ and $\tilde{\gamma}_{**}>0$, such that
if $T>0$ and $g\in C^1([0,T))$ satisfy
\begin{equation}\label{3-74}
g(t)\geq g_*,
\quad\text{for all}~t\in(0,T),
\end{equation}
and
\begin{equation}\label{3-75}
0\leq g'(t)
\leq\tilde{\gamma}_{**}g^{1+\delta_2}(t),
\quad\text{for all}~t\in(0,T),
\end{equation}
where $g_*$ is given in Lemma \ref{le3.7},
then for all $\varepsilon\in(0,\varepsilon_{***})$, and
\begin{equation}\label{3-76}
\theta>0,
\end{equation}
the functions $\underline{U}$ and $\underline{W}$ defined in \eqref{3-33} satisfy
\begin{equation}\label{3-77}
\mathcal{P}[\underline{U},\underline{W}](s,t)\leq0,
\quad\text{for all}~t\in(0,T)\cap(0,\frac{1}{\theta})
~\text{and}~s\in(\frac{1}{g(t)},R^N)\cap(0,\tilde{s}_{**}).
\end{equation}
\end{lemma}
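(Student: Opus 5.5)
We mirror the proof of Lemma \ref{le3.8}, now starting from the bound $\mathcal{P}[\underline{U},\underline{W}]\leq e^{-\theta t}\mathcal{P}_2[\widehat{U},\widehat{W}]$ of Lemma \ref{le3.6}. The chemotactic term in $\mathcal{P}_2$ carries the extra weight $C_{10}s^{2\sigma(\frac1N-1)}$. The plan is to show that this term dominates both $\widehat{U}_t$ and the diffusion term, each by a factor $\frac12$, on $(\frac{1}{g(t)},R^N)\cap(0,\tilde s_{**})$.

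\textbf{Choice of $\varepsilon_{***}$ and $\delta_2$.} By \eqref{3-72}--\eqref{3-73}, we have $2\sigma(N-1)>mN+2-2N$. Multiplying by $\frac1N$, this is equivalent to
\[
-\frac{2}{N}-(m-2)-2\sigma\Big(\frac1N-1\Big)>0 .
\]
By continuity in $\varepsilon$ we pick $\varepsilon_{***}\in(0,\frac14)\cap(0,\frac1N)$, with $\varepsilon_{***}\leq\varepsilon_{**}$ from Lemma \ref{le3.9}, such that for all $\varepsilon\in(0,\varepsilon_{***})$
\[
\kappa(\varepsilon):=-\frac{2}{N}+(\varepsilon-1)(m-2)-2\sigma\Big(\frac1N-1\Big)>0 .
\]
We also take $\delta_2:=\delta_1=1-\varepsilon-2\sigma(\frac1N-1)>0$, as in \eqref{3-70}. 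This matches the inner region, where Lemma \ref{le3.9} needs $g'\leq\tilde\gamma_*g^{1+\delta_1}$, so both lemmas can be applied with the same $g$.

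\textbf{Diffusion term.} As in \eqref{3-56}, we choose $\tilde s_{**}\leq s_*$ small enough that $Ne^{-\theta t}\widehat{U}_s>\xi_0$, so that \eqref{1-8} applies. Then, exactly as in \eqref{3-58}--\eqref{3-60}, the ratio of $-N^2s^{2-\frac2N}K_D(Ne^{-\theta t}\widehat U_s)^{m-1}\widehat U_{ss}$ to $\frac{Nd}{2e}C_{10}\widehat U_s s^{\varepsilon+2\sigma(\frac1N-1)}$ is at most
\[
2N^mK_D(e+e^{2-m})d^{m-2}C_{10}^{-1}(1-\varepsilon)\varepsilon^{(1-\varepsilon)(m-1)}c_6(\varepsilon)\,s^{\kappa(\varepsilon)} .
\]
Here we use \eqref{3-55} to compare $s-\frac{1-\varepsilon}{g}$ with $s$. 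Since $\kappa(\varepsilon)>0$, shrinking $\tilde s_{**}$ makes this ratio at most $\frac12$.

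\textbf{Time derivative and the main obstacle.} Using \eqref{3-26}--\eqref{3-27}, the ratio of $\widehat U_t$ to the chemotactic term is
\[
\frac{2e(1-\varepsilon)}{NdC_{10}}\,s^{-\varepsilon-2\sigma(\frac1N-1)}\,\frac{g'}{g^2}.
\]
This is the main obstacle. Because $\sigma<0$, the exponent $-2\sigma(\frac1N-1)$ is negative, so $s^{-\varepsilon-2\sigma(\frac1N-1)}$ is largest at the smallest admissible $s$. We therefore use $s>\frac1g$ to bound it by $g^{\varepsilon+2\sigma(\frac1N-1)}$, which gives the bound
\[
\frac{2e(1-\varepsilon)}{NdC_{10}}\,g^{\varepsilon+2\sigma(\frac1N-1)-2}\,g' .
\]
The growth condition \eqref{3-75} with $\delta_2=1-\varepsilon-2\sigma(\frac1N-1)$ makes this at most $\frac{2e(1-\varepsilon)}{NdC_{10}}\tilde\gamma_{**}$. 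Choosing $\tilde\gamma_{**}$ small enough makes it at most $\frac12$.

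Summing the two halves and discarding $-\theta\widehat U\leq0$ gives $\mathcal{P}_2\leq0$, hence \eqref{3-77} by \eqref{3-34}.
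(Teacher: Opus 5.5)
Your proposal is correct and follows the paper's proof essentially step for step. It uses the same choice of $\varepsilon_{***}$ via positivity of $-\frac{2}{N}+2\sigma(1-\frac{1}{N})+(\varepsilon-1)(m-2)$, and the same $\delta_2=1+2\sigma(1-\frac1N)-\varepsilon$, which indeed coincides with $\delta_1$. It also uses the same two ``half-domination'' bounds of the diffusion term and of $\widehat U_t$ by the weighted chemotactic term in $\mathcal{P}_2$, via $\varepsilon s\le s-\frac{1-\varepsilon}{g}\le s$ and $s>\frac1g$. As in the paper, $\delta_2$, $\tilde s_{**}$ and $\tilde\gamma_{**}$ depend on the fixed $\varepsilon$, which is exactly how the lemma is used in Lemma~\ref{le3.13}.
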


\begin{proof}
By using \eqref{3-73}, we deduce that there exists $\varepsilon_{***}\in(0,1)$
such that
\begin{equation}\label{3-78}
-\frac{2}{N}+2\sigma(1-\frac{1}{N})+(\varepsilon-1)(m-2)>0,
\end{equation}
and
\begin{equation}\label{3-79}
1+2\sigma(1-\frac{1}{N})-\varepsilon:=\delta_2>0,
\end{equation}
for all $\varepsilon\in(0,\varepsilon_{***})$.
For every $\varepsilon\in(0,\varepsilon_{***})$, $\mu_*,~\mu^*>0$, $d$ and $s_{**}$ taken from \eqref{d} and \eqref{3-50}, we employ \eqref{3-78} and $\varepsilon<1$ to select $\tilde{s}_{**}\in(0,1]$ and $\tilde{\gamma}_{**}>0$ sufficiently small such that
\begin{equation}\label{3-80}
\tilde{s}_{**}\leq s_{**},
\end{equation}
and
\begin{align}\label{3-81}
&\frac{2N^mK_D(e^{2-m}+e)}{C_{10}}\varepsilon^{(1-\varepsilon)(m-1)}(1-\varepsilon)d^{m-2}c_7(\varepsilon)
\nonumber
\\
&\times \tilde{s}_{**}^{-\frac{2}{N}+2\sigma(1-\frac{1}{N})+(\varepsilon-1)(m-2)}
\leq\frac{1}{2},
\end{align}
as well as
\begin{equation}\label{3-82}
\frac{2e(1-\varepsilon)}{NC_{10}d}\tilde{\gamma}_{**}
\leq\frac{1}{2},
\end{equation}
with
\begin{equation}\label{3-83}
c_7(\varepsilon):=\max
\begin{Bmatrix}
1,\varepsilon^{(\varepsilon-1)(m-2)+\varepsilon-2}
\end{Bmatrix}.
\end{equation}
Thanks to \eqref{1-8},  \eqref{P2}, \eqref{3-56}, \eqref{3-76}, \eqref{3-80}, $\widehat{U}_{ss}\leq 0$ by \eqref{3-28}, and the nonnegativity of $\widehat{U}$, we know that
\begin{align}\label{3-84}
&\mathcal{P}_2[\widehat{U},\widehat{W}](s,t)
\nonumber
\\
=&\widehat{U}_t-\theta\widehat{U}-N^2s^{2-\frac{2}{N}}D(Ne^{-\theta t}\widehat{U}_s)\widehat{U}_{ss}
-\frac{Nd}{2e}C_{10}\widehat{U}_s\cdot
s^{\varepsilon+2\sigma(\frac{1}{N}-1)}
\nonumber
\\
\leq&\widehat{U}_t
-N^2s^{2-\frac{2}{N}}K_D(Ne^{-\theta t}\widehat{U}_s)^{m-1}\widehat{U}_{ss}
-\frac{Nd}{2e}C_{10}\widehat{U}_s\cdot
s^{\varepsilon+2\sigma(\frac{1}{N}-1)},
\end{align}
for all $t\in(0,T)$ and $s\in(\frac{1}{g(t)},R^N)$.
Employing \eqref{3-27} and \eqref{3-28}, we conclude that
\begin{align}\label{3-85}
&\frac{-N^2s^{2-\frac{2}{N}}K_D(Ne^{-\theta t}\widehat{U}_s)^{m-1}\widehat{U}_{ss}}
{\frac{Nd}{2e}C_{10}\widehat{U}_s\cdot
s^{\varepsilon+2\sigma(\frac{1}{N}-1)}}
\nonumber
\\
=&-\frac{2N^mK_De^{1-(m-1)\theta t}}{dC_{10}}\cdot s^{2-\frac{2}{N}-\varepsilon+2\sigma(1-\frac{1}{N})}\widehat{U}_s^{m-2}\widehat{U}_{ss}
\nonumber
\\
=&\frac{2N^mK_De^{1-(m-1)\theta t}}{C_{10}}\varepsilon^{(1-\varepsilon)(m-1)}(1-\varepsilon)d^{m-2}
\nonumber
\\
&\times s^{2-\frac{2}{N}-\varepsilon+2\sigma(1-\frac{1}{N})}\left(s-\frac{1-\varepsilon}{g(t)}\right)^{(\varepsilon-1)(m-2)+\varepsilon-2},
\end{align}
for all $t\in(0,T)$ and $s\in(\frac{1}{g(t)},R^N)$.
Recalling \eqref{3-55}, \eqref{3-78} and \eqref{3-83},  we infer that
\begin{align}\label{3-86}
&s^{2-\frac{2}{N}-\varepsilon+2\sigma(1-\frac{1}{N})}\left(s-\frac{1-\varepsilon}{g(t)}\right)^{(\varepsilon-1)(m-2)+\varepsilon-2}
\nonumber
\\
\leq&c_7(\varepsilon)\cdot s^{-\frac{2}{N}+2\sigma(1-\frac{1}{N})+(\varepsilon-1)(m-2)}
\nonumber
\\
\leq&c_7(\varepsilon)\cdot \tilde{s}_{**}^{-\frac{2}{N}+2\sigma(1-\frac{1}{N})+(\varepsilon-1)(m-2)},
\end{align}
for all $\varepsilon\in(0,\varepsilon_{***})$, and $t\in(0,T)$, $s\in(\frac{1}{g(t)},R^N)\cap(0,\tilde{s}_{**})$.
Because of \eqref{3-60}, \eqref{3-81}, \eqref{3-85} and \eqref{3-86}, we obtain that
\begin{equation}\label{3-87}
-N^2s^{2-\frac{2}{N}}K_D(Ne^{-\theta t}\widehat{U}_s)^{m-1}\widehat{U}_{ss}
\leq\frac{1}{2}\cdot \frac{Nd}{2e}C_{10}\widehat{U}_s\cdot
s^{\varepsilon+2\sigma(\frac{1}{N}-1)},
\end{equation}
for all $t\in(0,T)\cap(0,\frac{1}{\theta})$ and $s\in(\frac{1}{g(t)},R^N)\cap(0,\tilde{s}_{**})$.
In view of \eqref{3-26}, \eqref{3-27}, \eqref{3-75} and \eqref{3-79}, we conclude that
\begin{align}\label{3-88}
\frac{\widehat{U}_t}
{\frac{Nd}{2e}C_{10}\widehat{U}_s\cdot
s^{\varepsilon+2\sigma(\frac{1}{N}-1)}}
=&\frac{2e(1-\varepsilon)}{NC_{10}d}
\cdot
s^{-\varepsilon+2\sigma(1-\frac{1}{N})}
\cdot
\frac{g'(t)}{g^2(t)}
\nonumber
\\
\leq&\frac{2e(1-\varepsilon)}{NC_{10}d}
\cdot g^{\varepsilon-2\sigma(1-\frac{1}{N})-2}(t)g'(t)
\nonumber
\\
=&\frac{2e(1-\varepsilon)}{NC_{10}d}
\cdot g^{-1-\delta_2}(t)g'(t)
\nonumber
\\
\leq&\frac{2e(1-\varepsilon)}{NC_{10}d}\tilde{\gamma}_{**},
\end{align}
for all $\varepsilon\in(0,\varepsilon_{***})$ and $t\in(0,T)$, $s\in(\frac{1}{g(t)},R^N)$.
\eqref{3-82} and \eqref{3-88} imply
\begin{equation}\label{3-89}
\widehat{U}_t
\leq\frac{1}{2}\cdot \frac{Nd}{2e}C_{10}\widehat{U}_s\cdot
s^{\varepsilon+2\sigma(\frac{1}{N}-1)},
\end{equation}
for all $\varepsilon\in(0,\varepsilon_{***})$ and $t\in(0,T)\cap(0,\frac{1}{\theta})$, $s\in(\frac{1}{g(t)},R^N)\cap(0,\tilde{s}_{**})$.
\eqref{3-34}, \eqref{3-84}, \eqref{3-87} and \eqref{3-89} lead to \eqref{3-77}.
\end{proof}

\subsubsection{$\mathcal{P}[\underline{U},\underline{W}](s,t)\leq 0$ for $\sigma\in\mathbb{R}$ in outer interval.}

\begin{lemma}\label{le3.11}
Let $N\geq 1$, $R>0,~\mu_*,~\mu^*>0$, $\varepsilon\in(0,\frac{1}{4})\cap(0,\frac{1}{N})$ and $\sigma\in\mathbb{R}$. Suppose that $D\in C^3([0,\infty))$ satisfies \eqref{D}. Let $s_0\in(0,1]$ denote an arbitrarily given constant. It follows that there exists $\theta_*>0$, for which the following holds:\\
For any $\theta>\theta_*$, if $T>0$ and $g\in C^1([0,T))$ satisfy
\begin{equation}\label{3-90}
g(t)>\frac{1}{R^N}
\quad\text{and}\quad
0\leq g'(t)\leq g^2(t),
\quad\text{for all}~t\in(0,T),
\end{equation}
then the functions $\underline{U}$ and $\underline{W}$ defined in \eqref{3-33} satisfy
\begin{equation}\label{3-91}
\mathcal{P}[\underline{U},\underline{W}](s,t)\leq0,
\end{equation}
for all $t\in(0,T)\cap(0,\frac{1}{\theta})$ and $s\in(\frac{1}{g(t)},R^N)\cap[s_0,R^N)$.
\end{lemma}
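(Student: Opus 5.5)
The plan is to let the absorptive term $-\theta\widehat U$ dominate in the outer region $s\in(\frac{1}{g(t)},R^N)\cap[s_0,R^N)$. Here $s$ stays away from the origin, so every other contribution to $\mathcal{P}$ should be bounded by a constant independent of $\theta$, $T$ and $g$, while $\widehat U$ has a positive lower bound. I will not use Lemma \ref{le3.6}, since Lemma \ref{le3.5} only covers $s\le s_*$. Instead I start from the exact expansion
\begin{align*}
\mathcal{P}[\underline U,\underline W](s,t)=e^{-\theta t}\Bigl\{&\widehat U_t-\theta\widehat U-N^2s^{2-\frac2N}D(Ne^{-\theta t}\widehat U_s)\widehat U_{ss}\\
&-N\widehat U_s\bigl(e^{-\theta t}\widehat W-b(s)\bigr)(1+|\partial_rv|^2)^\sigma\Bigr\}.
\end{align*}
In the chemotactic term I drop $-Ne^{-\theta t}\widehat U_s\widehat W(1+|\partial_r v|^2)^\sigma\le0$, which is allowed because $\widehat U_s\ge0$ and $\widehat W\ge0$. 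What remains is $N\widehat U_s\,b(s)\,(1+|\partial_rv|^2)^\sigma$.

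\textbf{Step 1: two-sided bounds on $\widehat U$ and its derivatives.} By \eqref{3-55}, $\varepsilon s_0\le \varepsilon s\le s-\frac{1-\varepsilon}{g(t)}\le R^N$ in this region. From \eqref{3-27} this gives
\[
\varepsilon^{1-\varepsilon}dR^{N(\varepsilon-1)}\le\widehat U_s\le \varepsilon^{1-\varepsilon}d(\varepsilon s_0)^{\varepsilon-1}.
\]
From \eqref{3-28} it gives $|\widehat U_{ss}|\le \varepsilon^{1-\varepsilon}d(\varepsilon s_0)^{\varepsilon-2}$. From \eqref{3-26} together with $0\le g'\le g^2$ in \eqref{3-90}, it gives
\[
\widehat U_t\le \varepsilon^{1-\varepsilon}(1-\varepsilon)d(\varepsilon s_0)^{\varepsilon-1}.
\]
Finally, from \eqref{3-25}, $\widehat U\ge \varepsilon^{-\varepsilon}d(\varepsilon s_0)^{\varepsilon}=ds_0^\varepsilon$.

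\textbf{Step 2: bounding the diffusion term.} For $t<\frac1\theta$ we have $e^{-\theta t}\in(e^{-1},1]$. Hence the argument $Ne^{-\theta t}\widehat U_s$ of $D$ lies in a compact interval $[a_1,a_2]\subset(0,\infty)$ whose endpoints depend only on $N,R,\varepsilon,s_0,d$. Since $D$ is continuous, $\max_{[a_1,a_2]}D$ is finite, and so
\[
-N^2s^{2-\frac2N}D(\cdot)\widehat U_{ss}\le c_1
\]
with $c_1$ independent of $\theta$, $T$ and $g$. Note that \eqref{1-8} is not needed here.

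\textbf{Step 3: bounding the chemotactic remainder.} This is the step I expect to be the main obstacle, because for $\sigma>0$ the factor $(1+|\partial_rv|^2)^\sigma$ is not trivially bounded. If $\sigma\le0$ it is at most $1$. If $\sigma>0$, I use \eqref{3-19} to get $\|w(\cdot,t)\|_{L^1}\le\|u_0\|_{L^1}+\|w_0\|_{L^1}\le2M^*$, and then apply Lemma \ref{le2.3} with $h=w$ and $z=v$. For $N\ge2$ this yields $|\partial_rv(r,t)|\le C\,2M^*\,r^{1-N}\le 2CM^*s_0^{\frac{1-N}{N}}$, because $r=s^{1/N}\ge s_0^{1/N}$. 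For $N=1$, \cite[Remark 2.1]{NT} gives the bound directly, as in \eqref{3-37}. Since $b$ is bounded on $[0,R^N]$, the whole remainder is at most a constant $c_2$ depending on $N,R,M^*,\varepsilon,s_0,\sigma$ but not on $\theta$, $T$ or $g$.

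\textbf{Step 4: choosing $\theta_*$.} Collecting the bounds, and writing $c_0$ for the bound on $\widehat U_t$ from Step 1, I get
\[
\mathcal{P}[\underline U,\underline W]\le e^{-\theta t}\bigl(c_0+c_1+c_2-\theta ds_0^\varepsilon\bigr).
\]
Setting $\theta_*:=(c_0+c_1+c_2)/(ds_0^\varepsilon)$, every $\theta>\theta_*$ gives \eqref{3-91} for all $t\in(0,T)\cap(0,\frac1\theta)$ and all $s\in(\frac{1}{g(t)},R^N)\cap[s_0,R^N)$.
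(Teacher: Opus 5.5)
Your proof is correct and follows the same route as the paper, which simply defers to the argument of \cite[Lemma 3.10]{TW}. On the outer region $s\ge s_0$, every term of $e^{\theta t}\mathcal{P}[\underline U,\underline W]$ other than $-\theta\widehat U$ is bounded independently of $\theta$, $T$ and $g$, while $\widehat U\ge ds_0^\varepsilon$, so a large $\theta$ wins; your Step 3 also makes explicit the one ingredient missing from the flux-free setting of \cite{TW}, namely the bound on $(1+|\partial_r v|^2)^\sigma$ for $\sigma>0$ via Lemma \ref{le2.3} with $r\ge s_0^{1/N}$ (or \cite[Remark 2.1]{NT} for $N=1$), which the paper's one-line proof leaves implicit.
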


\begin{proof}
The same method as that for \citep[Lemma 3.10]{TW} can be applied to prove the lemma.
\end{proof}

\subsection{Subsolution properties: $\mathcal{Q}[\underline{U},\underline{W}](s,t)\leq 0$.}

\begin{lemma}\label{le3.12}
Let $N\geq1$, $R>0$, $\varepsilon\in(0,\frac{1}{4})$ and $T>0$.
Then for $\underline{U}$ and $\underline{W}$ defined in \eqref{3-33}, and for all
\begin{equation}\label{3-92}
\theta\geq1,
\end{equation}
we conclude that
\begin{equation}\label{3-93}
\mathcal{Q}[\underline{U},\underline{W}](s,t)\leq0,
\quad\text{for all~}
t\in(0,T)\text{ and }s\in(0,R^N).
\end{equation}
\end{lemma}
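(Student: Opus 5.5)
The plan is a direct computation: by \eqref{3-9}, $\mathcal{Q}[\underline{U},\underline{W}]=\underline{W}_t+\underline{W}-\underline{U}$, and everything is explicit through \eqref{3-33}, Lemma \ref{le3.3} and Lemma \ref{le3.4}. So I would substitute the definitions, factor out $e^{-\theta t}$, and read off the sign. No comparison or estimate on $v$ is needed, because $\mathcal{Q}$ does not involve $v$.

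\textbf{Step 1: rewrite $\mathcal{Q}$.} By \eqref{3-30}, $\widehat{W}$ does not depend on $t$. Hence $\underline{W}_t=-\theta e^{-\theta t}\widehat{W}(s)=-\theta e^{-\theta t}ds^\varepsilon$. Substituting this gives
\begin{equation*}
\mathcal{Q}[\underline{U},\underline{W}](s,t)
=e^{-\theta t}\Bigl\{(1-\theta)ds^\varepsilon-\widehat{U}(s,t)\Bigr\},
\quad t\in(0,T),~s\in(0,R^N).
\end{equation*}

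\textbf{Step 2: check the signs.} By \eqref{3-92} we have $\theta\geq1$, and $d>0$ by \eqref{d}, so $(1-\theta)ds^\varepsilon\leq0$. It remains to verify $\widehat{U}\geq0$ from \eqref{3-25}.
\begin{itemize}
\item On $[0,\frac{1}{g(t)}]$, $\widehat{U}=dg^{1-\varepsilon}(t)s\geq0$.
\item On $(\frac{1}{g(t)},R^N]$, we have $s-\frac{1-\varepsilon}{g(t)}>\frac{\varepsilon}{g(t)}>0$, so the power $\bigl(s-\frac{1-\varepsilon}{g(t)}\bigr)^\varepsilon$ is well defined and positive. Hence $\widehat{U}>0$ there as well.
\end{itemize}
Both terms inside the braces are therefore nonpositive, and \eqref{3-93} follows for all $t\in(0,T)$ and $s\in(0,R^N)$.

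There is no real obstacle here. The only point needing care is the positivity of the base $s-\frac{1-\varepsilon}{g(t)}$ in the outer branch of $\widehat{U}$, which is exactly the observation already recorded in \eqref{3-55}. This step is where the time-independent choice of $\widehat{W}$ in Lemma \ref{le3.4} pays off: the decay factor $e^{-\theta t}$ with $\theta\geq1$ alone makes $\underline{W}_t+\underline{W}\leq0$. No relation between $\widehat{U}$ and $\widehat{W}$ is needed.
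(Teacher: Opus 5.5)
Your proposal is correct and follows the paper's proof: both factor out $e^{-\theta t}$ to get $\widehat{W}_t+(1-\theta)\widehat{W}-\widehat{U}$, use $\widehat{W}_t\equiv0$ and $\theta\geq1$ to discard the $\widehat{W}$ terms, and conclude from $\widehat{U}\geq0$. Your explicit check that the base $s-\frac{1-\varepsilon}{g(t)}$ is positive on the outer branch supplies the nonnegativity of $\widehat{U}$, which the paper only asserts.
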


\begin{proof}
Employ \eqref{3-8}, \eqref{3-30}, \eqref{3-33}, \eqref{3-92}, and the nonnegativity of $\widehat{U}$ and $\widehat{W}$, we yield that
\begin{align*}
e^{\theta t}\cdot\mathcal{Q}[\underline{U},\underline{W}](s,t)
=\widehat{W}_t
-\theta\widehat{W}
+\widehat{W}
-\widehat{U}
\leq-\widehat{U}
\leq0,
\end{align*}
for all $t\in(0,T)$ and $s\in(0,R^N)$.
\end{proof}

\subsection{Proof of Theorem \ref{th1.1}.}

\begin{lemma}\label{le3.13}
Let $N\geq 1$ and $R>0$.~Suppose that $D\in C^3([0,\infty))$ satisfy \eqref{D}, \eqref{1-8} with parameters $\xi_0>0,~K_D>0$, and exponents $m\in\mathbb{R}$ and $\sigma\in\mathbb{R}$ fulfilling
\begin{equation}\label{3-94}
m<2-\frac{2}{N},
\end{equation}
and
\begin{equation}\label{3-95}
\sigma\in
\begin{cases}
\left(-\infty,+\infty\right),
&\text{if}~N=1,
\\
\left(\max\{\frac{N}{2-2N},\frac{mN+2-2N}{2N-2}\},+\infty\right),
&\text{if}~N\geq 2.
\end{cases}
\end{equation}
It follows that there exists $\varepsilon\in(0,\frac{1}{4})\cap(0,\frac{1}{N})$ with the property that for arbitrary $\mu_*,~\mu^*>0$, and $T_*>0$, we can select $\theta>0$, $T\in(0,T_*)$ and a positive function $g\in C^1([0,T))$ such that
\begin{equation}\label{3-96}
g(t)\to+\infty,
\quad\text{as}~t\nearrow T,
\end{equation}
and that the functions $\underline{U}$ and $\underline{W}$ as in \eqref{3-33} satisfy
\begin{equation}\label{3-97}
\mathcal{P}[\underline{U},\underline{W}](s,t)\leq0
\quad\text{and}\quad
\mathcal{Q}[\underline{U},\underline{W}](s,t)\leq0,
\end{equation}
for all $t\in(0,T)$ and $s\in(0,R^N)\setminus\{\frac{1}{y(t)}\}$.
\end{lemma}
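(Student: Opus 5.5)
We assemble Lemma \ref{le3.13} from the preceding lemmas. First we fix $\varepsilon$, depending only on $N$, $m$ and $\sigma$. If $\sigma\geq0$ (or $N=1$, via Remark \ref{remark 3.1}) we take $\varepsilon\in(0,\varepsilon_*)$ with $\varepsilon_*$ from Lemma \ref{le3.8}. If $N\geq2$ and $\sigma<0$, then \eqref{3-95} gives \eqref{3-65} and \eqref{3-73}, and we take $\varepsilon<\min\{\varepsilon_{**},\varepsilon_{***}\}$ from Lemmas \ref{le3.9} and \ref{le3.10}. In every case $\varepsilon\in(0,\frac14)\cap(0,\frac1N)$.

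Given $\mu_*,\mu^*>0$ and $T_*$, the lemmas supply $g_*$ (Lemma \ref{le3.7}), a rate constant, a growth exponent and a threshold radius. For $\sigma\geq0$ these are $\gamma:=\min\{\gamma_*,\gamma_{**},1\}$, exponent $\delta:=1-\varepsilon$ and $s_0:=s_{**}$. For $\sigma<0$ they are $\gamma:=\min\{\tilde\gamma_*,\tilde\gamma_{**},1\}$, exponent $\delta:=\min\{\delta_1,\delta_2\}$ and $s_0:=\tilde s_{**}$. Here $\delta_1=\delta_2$ by \eqref{3-70} and \eqref{3-79}, and $\delta\le1$ since $\sigma<0$. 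We then apply Lemma \ref{le3.11} with this $s_0\in(0,1]$ to get $\theta_*$. The choice
\[
\theta>\max\{\theta_*,1,2/T_*\}
\]
secures Lemma \ref{le3.12} together with $T<1/\theta$.

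Next we construct $g$ explicitly as the blow-up solution of $g'=\gamma g^{1+\delta}$ with $g(0)=g_0$, that is,
\[
g(t)=\bigl(g_0^{-\delta}-\gamma\delta t\bigr)^{-1/\delta},\qquad T:=\frac{g_0^{-\delta}}{\gamma\delta}.
\]
Then $g\to\infty$ as $t\nearrow T$, and $g'\geq0$. Choosing $g_0\geq g_*$ large enough gives $T<\min\{T_*,1/\theta\}$, so that $(0,T)\cap(0,\frac1\theta)=(0,T)$, and also $g\geq g_*>\frac{1}{R^N}$. To get $g'\le g^2$ for \eqref{3-90}, we note $\gamma g^{1+\delta}\le g^2$ because $\gamma\le1$, $\delta\le1$ and $g\geq1$; we may assume $g_*\geq1$. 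The bounds \eqref{3-40}, \eqref{3-46}, \eqref{3-67} and \eqref{3-75} then hold by the choice of $\gamma$ and $\delta$. If $\delta<1-\varepsilon$, a smaller exponent in the hypothesis bound is only weaker once $g\ge1$, so this is consistent.

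With these choices, for $t\in(0,T)$ the interval $(0,R^N)\setminus\{1/g(t)\}$ splits into three pieces:
\begin{itemize}
\item $s<1/g(t)$, handled by Lemma \ref{le3.7} or \ref{le3.9};
\item $1/g(t)<s<s_0$, handled by Lemma \ref{le3.8} or \ref{le3.10};
\item $s\geq s_0$ with $s>1/g(t)$, handled by Lemma \ref{le3.11}.
\end{itemize}
These pieces cover everything, and in each one $\mathcal P\le0$. Lemma \ref{le3.12} gives $\mathcal Q\le0$ on the whole interval.

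The step requiring the most care is the order of constant selection. The threshold $s_0$ has to be fixed before $\theta_*$, because Lemma \ref{le3.11} takes $s_0$ as given; and $\theta$ has to be fixed before $g_0$, because $T<1/\theta$ must be forced by enlarging $g_0$. Since the lemmas' constants depend only on $\varepsilon,\mu_*,\mu^*$ and not on $\theta$ or $g_0$, this ordering is consistent.
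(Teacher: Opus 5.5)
Your proposal is correct and follows essentially the same route as the paper. You fix $\varepsilon$ below the thresholds of Lemmas \ref{le3.8}--\ref{le3.10}, take $\gamma,\delta,s_0$ as minima of the constants those lemmas supply, and choose $\theta$ via Lemmas \ref{le3.11}--\ref{le3.12}. You then let $g$ solve $g'=\gamma g^{1+\delta}$ with $g_0\ge g_*$ large enough that $T=1/(\gamma\delta g_0^{\delta})<\min\{1/\theta,T_*\}$, and cover $(0,R^N)\setminus\{1/g(t)\}$ by the inner, intermediate and outer lemmas. The differences from the paper are minor bookkeeping: your split by the sign of $\sigma$ (using $\delta_1=\delta_2$), the strict choice $\theta>\theta_*$ as Lemma \ref{le3.11} requires, and the explicit check of $g'\le g^2$ are slightly tidier than the paper's single minimum over all constants.
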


\begin{proof}
For $\varepsilon_*$, $\varepsilon_{**}$ and $\varepsilon_{***}$ defined in Lemmas \ref{le3.8}-\ref{le3.10}, one can get any $\varepsilon\in(0,\frac{1}{4})\cap(0,\frac{1}{N})$ such that
\begin{equation*}
\varepsilon<\min\left\{\varepsilon_*,~\varepsilon_{**},~\varepsilon_{***}\right\}.
\end{equation*}
Define
\begin{equation*}
\delta:=\min\left\{\delta_1,~\delta_2,~1-\varepsilon\right\},
\end{equation*}
where $\delta_1$ and $\delta_2$ are shown in Lemma \ref{le3.9} and Lemma \ref{le3.10}.
Given $\mu_*,~\mu^*>0$, $T_*>0$, together with $s_*$, $s_{**}$, $\tilde{s}_{**}$ and $s_0$ as specified in \eqref{3-32}, Lemma \ref{le3.8}, Lemma \ref{le3.10} and Lemma \ref{le3.11}, respectively, we set
\begin{equation*}
s_0=\min\left\{s_*,~s_{**},~\tilde{s}_{**}\right\}.
\end{equation*}
Define
\begin{equation*}
\theta:=\max\left\{\theta_*,~1\right\},
\end{equation*}
where $\theta_*$ is shown in Lemma \ref{le3.11}.
As $d,~\gamma_*,~\gamma_{**},~\tilde{\gamma}_{*},~\tilde{\gamma}_{**}$
in \eqref{d}, Lemmas \ref{le3.7}-\ref{le3.10}, we pick
\begin{equation*}
\gamma:=\min\left\{\gamma_*,~\gamma_{**},~\tilde{\gamma}_{*},~\tilde{\gamma}_{**},~1\right\}.
\end{equation*}
A sufficiently large $g_0>0$ is chosen such that
\begin{equation*}
g_0\geq g_*,
\end{equation*}
where $g_*$ is defined in Lemma \ref{le3.7}, and that
\begin{equation}\label{3-98}
T:=\frac{1}{\gamma\delta g_0^\delta}
\quad\text{satisfies}\quad T<\min\left\{\frac{1}{\theta},~T_*\right\}.
\end{equation}
Given $T>0$ satisfying \eqref{3-98}, the problem
\begin{equation*}
\begin{cases}
g'(t)=\gamma g^{1+\delta}(t),
\quad t\in(0,T),
\\
g(0)=g_0,
&\end{cases}
\end{equation*}
possesses a solution $g\in C^1([0,T))$ which fulfills \eqref{3-96} and $g\geq g_0\geq g_*$. Then all conditions on $g$ specified in Lemmas \ref{le3.7}-\ref{le3.11} are fulfilled. Lemmas \ref{le3.7}-\ref{le3.11} and Remark \ref{remark 3.1} lead to \eqref{3-97} under the condition of \eqref{3-94} and \eqref{3-95}.
It follows from \eqref{3-98} that $(0,T)\cap(0,\frac{1}{\theta})=(0,T)$ and $T<T_*$. Hence, the proof of the lemma is accomplished.
\end{proof}

\begin{proof}[Proof of Theorem \ref{th1.1}.]
According to Lemma \ref{le3.13}, we conclude that $(\underline{U},\underline{W})$ as defined in \eqref{3-33} is a subsolution to \eqref{wen}. For the definition of \eqref{3-10}, we let
\begin{equation*}
M^{(u)}(r)=\omega_N\underline{U}(r^N,0)\quad\text{and}\quad M^{(w)}(r)=\omega_N\underline{W}(r^N,0),
\end{equation*}
for $r\in[0,R]$, where $\omega_N$ denotes the surface area of the unit sphere in $\mathbb{R}^N$.
For $(u_0,w_0)$ satisfying \eqref{1-11}-\eqref{1-13}, combining this with the comparison principle stated in Lemma \ref{le3.2} and \eqref{3-3}, \eqref{3-27}, \eqref{3-33}, \eqref{3-96}, \eqref{3-98}, we derive that
\begin{equation*}
\frac{1}{N}u(0,t)=U_{s}(0,t)\geq\underline{U}_{s}(0,t)=e^{-\theta t}dg^{1-\varepsilon}(t)\geq e^{-1}dg^{1-\varepsilon}(t)\rightarrow\infty,\text{ as }t\nearrow T,
\end{equation*}
which implies that $T_{max}\leq T<\infty$. Then we get the conclusion of Theorem \ref{th1.1}.
\end{proof}

\section{Global existence of solution}
In this section, we establish global existence for solutions to \eqref{wen} in the cases $N=1$ and $N\geq2$.

\subsection{Global existence of solution in $N=1$.}\label{se4.1}

\begin{lemma}\label{le4.1}
Let $\Omega\subset\mathbb{R}$ be a bounded smooth domain. Assume that \eqref{1-16} is valid.
Suppose that $D\in C^2([0,\infty))$ satisfies \eqref{D}, \eqref{1-15} with parameters $\xi_0>0,~k_D>0,~m\in\mathbb{R}~\text{and~}\sigma\in\mathbb{R}$. Then for any
\begin{align}\label{p}
p>p_*:=\max\{3,~m\},
\end{align}
and $T\in(0,T_{\max})$, the following holds:
\begin{itemize}
\item [$(i)$] If $\sigma\in\mathbb{R}$ and $m>0$, then there exists a constant $C_{11}(p)>0$ such that
\begin{equation}\label{4-2}
\left\|u(\cdot,t)\right\|_{L^p}
\leq C_{11}(p),
\quad\text{for all}~t\in(0,T_{\max}).
\end{equation}
\end{itemize}

\begin{itemize}
\item [$(ii)$] If $\sigma\in\mathbb{R}$ and $m=0$, then there exists a constant $C_{12}(p,T)>0$ such that
\begin{equation}\label{4-3}
\left\|u(\cdot,t)\right\|_{L^p}
\leq C_{12}(p,T),
\quad\text{for all}~t\in(0,T).
\end{equation}
\end{itemize}
\end{lemma}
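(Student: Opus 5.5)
In one dimension the chemotactic flux is harmless because $v_x$ is bounded in terms of mass alone, whatever the sign of $\sigma$. Since $0=v_{xx}-v+w$ with $\|w(\cdot,t)\|_{L^1}\le \|u_0\|_{L^1}+\|w_0\|_{L^1}$ by \eqref{3-19}, elliptic theory (as in \cite[Remark 2.1]{NT}) gives
\[
\|v(\cdot,t)\|_{W^{1,\infty}}\le c_1 \quad\text{for all } t\in(0,T_{\max}).
\]
Consequently the sensitivity factor $(1+|v_x|^2)^\sigma$ is bounded above by a constant $c_2$ for every $\sigma\in\mathbb{R}$. This is why $\sigma$ plays no role in the statement.

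\textbf{Step 1 (energy identity).} Test the first equation of \eqref{wen} against $u^{p-1}$ with $p>p_*$ and integrate by parts:
\[
\frac1p\frac{\mathrm{d}}{\mathrm{d}t}\int_\Omega u^p+(p-1)\int_\Omega D(u)u^{p-2}u_x^2=(p-1)\int_\Omega u^{p-1}(1+v_x^2)^\sigma v_xu_x .
\]
Apply Young's inequality to the right-hand side. This absorbs half of the diffusion term and leaves a remainder of the form
\[
c\int_\Omega \frac{u^{p}}{D(u)}.
\]
Using \eqref{1-15} and \eqref{D} (note $D>0$ on $[0,\xi_0]$), this remainder is bounded by $c\int_\Omega u^{p+1-m}+c$. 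Likewise the diffusion term dominates
\[
c\int_\Omega \bigl|(u^{\frac{p+m-1}{2}})_x\bigr|^2-c .
\]

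\textbf{Step 2 (case $m>0$, uniform bound).} Set $\phi=u^{(p+m-1)/2}$; it is positive because $p>p_*\ge m$ and $p>3$. In 1D the Gagliardo--Nirenberg inequality, together with the mass bound $\|\phi\|_{L^{2/(p+m-1)}}\le c$, interpolates $\int u^{p+1-m}$ and $\int u^p$ against $\|\phi_x\|_{L^2}^2$.

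\begin{itemize}
\item The exponent condition needed is $p+1-m<p+m+2$, i.e. $m>-\tfrac12$. So for $m>0$ the remainder is absorbed into the diffusion, with some room to spare.
\item The room to spare is used to add $\int u^p$ to both sides. This yields
\[
y'+Ay^{\alpha}\le B, \qquad y=\int_\Omega u^p,\quad \alpha>1 \text{ (or at least } \alpha>0).
\]
\end{itemize}

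Lemma \ref{le2.2} with $h\equiv B$ then gives \eqref{4-2}.

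\textbf{Step 3 (case $m=0$).} The Young remainder becomes $c\int u^{p+1}$. The diffusion only controls $\int |(u^{(p-1)/2})_x|^2$, and Gagliardo--Nirenberg with the $L^1$ bound gives $\int u^{p+1}\le \eta\|\phi_x\|_2^2+C_\eta$ only when $p+1<p+1$ — exactly the borderline case.

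\begin{itemize}
\item My first attempt is to avoid Young and integrate by parts in the cross term instead. Writing $u^{p-1}u_x=\tfrac1p(u^p)_x$ gives
\[
-\frac{p-1}{p}\int_\Omega u^p\bigl((1+v_x^2)^\sigma v_x\bigr)_x .
\]
\item Here $v_{xx}=v-w$ and $(1+v_x^2)^\sigma$ is bounded with bounded derivative factor. Hence the term is bounded by $c\int u^p(1+w)$.
\item By \eqref{3-19}, $\|w(\cdot,t)\|_{L^\infty}\le \|w_0\|_{L^\infty}+\int_0^t\|u(\cdot,s)\|_{L^\infty}\,\mathrm{d}s$. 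Alternatively, I would couple $\int u^p$ with $\int w^{p+1}$, since $\frac{\mathrm{d}}{\mathrm{d}t}\int w^{q}\le q\int w^{q-1}u$, so that $\int u^p w\le \int u^{p+1}/(\ldots)+\int w^{p+1}$.
\end{itemize}

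This produces a Gronwall-type inequality for $y=\int u^p+\int w^{p+1}$ on $(0,T)$, with $T$-dependent growth and no uniform dissipation. The result is \eqref{4-3} with a constant depending on $T$.

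\textbf{Main obstacle.} Closing the critical case $m=0$. Controlling $\int u^pw$ requires a superlinear-but-closable estimate, and this is where the sharp bookkeeping of exponents (and the restriction $p>3$) must be done carefully.
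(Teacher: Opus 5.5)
Your plan follows the paper's route. For $m>0$ you test with $u^{p-1}$, use $\|v\|_{W^{1,\infty}}\le c$, apply Gagliardo--Nirenberg with the mass bound, and conclude via Lemma \ref{le2.2}. For $m=0$ you integrate the cross term by parts and couple with $\int_\Omega w^{p+1}$. Your differentiation of $(1+v_x^2)^\sigma v_x$ is cleaner than the paper's step of pulling $(1+|\nabla v|^2)^\sigma$ out of the integral.

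\textbf{Gap in part (ii).} Part (ii) is not closed: you leave the decisive step as the ``main obstacle'' and misdiagnose it. No superlinear estimate is needed. The route through $\|w\|_{L^\infty}\le\|w_0\|_{L^\infty}+\int_0^t\|u\|_{L^\infty}$ is circular, and the restriction $p>3$ is not what makes things close. The missing observation is that at the borderline the Gagliardo--Nirenberg inequality still holds with power exactly two and a constant $C_0$ depending only on $p$, $\Omega$ and $\|u_0\|_{L^1}$:
\[
\int_\Omega u^{p+1}\le C_0\bigl\|(u^{\frac{p-1}{2}})_x\bigr\|_{L^2}^2+C_0 .
\]
This is the exponent $a_3=\frac{p-1}{p+1}$ in the paper. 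So you do not need an arbitrarily small $\eta$ from the interpolation. You need a small coefficient in front of $\int_\Omega u^{p+1}$, and Young's inequality provides it: $c\int_\Omega u^pw\le\varepsilon_1\int_\Omega u^{p+1}+c(\varepsilon_1)\int_\Omega w^{p+1}$, and $\frac{\mathrm{d}}{\mathrm{d}t}\int_\Omega w^{p+1}+(p+1)\int_\Omega w^{p+1}=(p+1)\int_\Omega w^pu\le\varepsilon_1\int_\Omega u^{p+1}+c(\varepsilon_1)\int_\Omega w^{p+1}$. Taking $\varepsilon_1C_0$ small relative to the diffusion constant absorbs every $u^{p+1}$ term. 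The remaining $c\int_\Omega u^p$ is harmless, and you are left with the \emph{linear} inequality $y'\le Cy+C$ for $y=\int_\Omega(u^p+w^{p+1})$. Hence $y(t)\le(y(0)+1)e^{Ct}$ on $(0,T)$, which is \eqref{4-3}. This also explains why the Young-on-the-gradient route of your Step 1 fails at $m=0$: it produces $c\int_\Omega u^{p+1}$ with a fixed, non-small $c$.

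\textbf{Other corrections.} In Step 2 the critical exponent is off by one. With $\phi=u^{\frac{p+m-1}{2}}$ one gets $\int_\Omega u^{p+1-m}\le C\|\phi_x\|_{L^2}^{\frac{2(p-m)}{p+m}}+C$. The borderline power is therefore $p+m+1$ (the $N=1$ case of $\gamma+m-1+\frac{2}{N}$), and absorption requires $m>0$, not $m>-\frac12$. Your own Step 3, which correctly finds $m=0$ borderline, contradicts the claimed threshold $m>-\frac12$. Part (i) survives, since it only concerns $m>0$. Finally, for $m<1$ your claim that the diffusion term dominates $c\int_\Omega|(u^{\frac{p+m-1}{2}})_x|^2-c$ fails near $u=0$. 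There the weight $u^{m-1}$ is unbounded, and an additive constant cannot repair a gradient term. Use $D(u)\ge c(u+1)^{m-1}$ and test with $(u+1)^{p-1}$ instead; the paper glosses over this by assuming $u>\xi_0$.
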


\begin{proof}
Combining the boundary conditions and the second equation of \eqref{wen}, the results in \citep[Remark 1.1]{NT}, we can obtain that
\begin{equation}\label{4-4}
\left\|v(\cdot,t)\right\|_{W^{1,\infty}(\Omega)}
\leq c_8,
\quad\text{for all }t\in(0,T_{\max}).
\end{equation}
It suffices to focus on the region $\left\lbrace(x,t)\in\Omega\times(0,T_{max}):u>\xi_0\right\rbrace $. since the boundedness of $u$ holds naturally on its complement. For convenience, we may assume without loss of generality that $u>\xi_0$ holds throughout $\Omega\times(0,T_{\max})$.
\begin{itemize}
\item [$(i)$]$\sigma\in\mathbb{R}$ and $m>0$.
\end{itemize}
Making use of \eqref{1-15}, \eqref{p}, \eqref{4-4}, Young's inequality, integrating by parts and the first equation of \eqref{wen}, we conclude that
\begin{align*}
&\frac{1}{p}\frac{\mathrm{d}}{\mathrm{d}t}\int_\Omega u^p
+k_D(p-1)\int_\Omega u^{m+p-3}|\nabla u|^2
\nonumber
\\
\leq&\frac{1}{p}\frac{\mathrm{d}}{\mathrm{d}t}\int_\Omega u^p
+(p-1)\int_\Omega u^{p-2}D(u)|\nabla u|^2
\nonumber
\\
=&(p-1)\int_\Omega u^{p-1}(1+\left|\nabla v\right|^2)^{\sigma}\nabla u\cdot\nabla v
\nonumber
\\
\leq&\frac{(p-1)k_D}{2}\int_\Omega u^{m+p-3}|\nabla u|^2
+\frac{p-1}{2k_D}\int_\Omega u^{p-m+1}\left|\nabla v\right|^2\left(1+\left|\nabla v\right|^2\right)^{2\sigma}
\nonumber
\\
\leq&\frac{(p-1)k_D}{2}\int_\Omega u^{m+p-3}|\nabla u|^2
+c_{9}(p)\int_\Omega u^{p-m+1},
\quad\text{for all }t\in(0,T_{\max}),
\end{align*}
which implies
\begin{align}\label{4-5}
&\frac{\mathrm{d}}{\mathrm{d}t}\int_\Omega u^p
+c_{10}(p)\left\|\nabla u^{\frac{p+m-1}{2}}\right\|
_{L^2}^2
+\left(\int_\Omega u^p\right)^{\frac{1}{2}}
\nonumber
\\
\leq&c_{11}(p)\int_\Omega u^{p-m+1}
+\left(\int_\Omega u^p\right)^{\frac{1}{2}},
\quad\text{for all }t\in(0,T_{\max}).
\end{align}
Thanks to the Gagliardo-Nirenberg inequality and \eqref{2-3}, we have that
\begin{align}\label{4-6}
&c_{11}(p)\int_\Omega u^{p-m+1}
\nonumber
\\
=&c_{11}(p)\left\|u^{\frac{p+m-1}{2}}\right\|
_{L^\frac{2(p-m+1)}{p+m-1}}
^{\frac{2(p-m+1)}{p+m-1}}
\nonumber
\\
\leq&c_{12}(p)\left\|\nabla u^{\frac{p+m-1}{2}}\right\|_{L^2}^{\frac{2a_1(p-m+1)}{p+m-1}}
\left\|u^{\frac{p+m-1}{2}}\right\|_{L^\frac{2}{p+m-1}}^{\frac{2(1-a_1)(p-m+1)}{p+m-1}}
+c_{12}(p)\left\|u^{\frac{p+m-1}{2}}\right\|
_{L^\frac{2}{p+m-1}}
^{\frac{2(p-m+1)}{p+m-1}}
\nonumber
\\
\leq&c_{13}(p)\left\|\nabla u^{\frac{p+m-1}{2}}\right\|_{L^2}^{\frac{2a_1(p-m+1)}{p+m-1}}
+c_{14}(p),
\end{align}
and
\begin{align}\label{4-7}
&\left(\int_\Omega u^p\right)^{\frac{1}{2}}
\nonumber
\\
=&\left\|u^{\frac{p+m-1}{2}}\right\|
_{L^\frac{2p}{p+m-1}}
^{\frac{p}{p+m-1}}
\nonumber
\\
\leq&c_{15}(p) \left\|\nabla u^{\frac{p+m-1}{2}}\right\|_{L^2}^{\frac{pa_2}{p+m-1}}
\left\|u^{\frac{p+m-1}{2}}\right\|_{L^\frac{2}{p+m-1}}^{\frac{p(1-a_2)}{p+m-1}}
+c_{15}(p)\left\|u^{\frac{p+m-1}{2}}\right\|
_{L^\frac{2}{p+m-1}}
^{\frac{p}{p+m-1}}
\nonumber
\\
\leq&c_{16}(p)\left\|\nabla u^{\frac{p+m-1}{2}}\right\|_{L^2}^{\frac{pa_2}{p+m-1}}
+c_{17}(p).
\end{align}
Here, for any $p>p_*$,
\begin{equation*}
a_1=\frac{(p-m)(p+m-1)}{(p+m)(p-m+1)}\in(0,1),
\quad
a_2=\frac{(p-1)(p+m-1)}{p(p+m)}\in(0,1),
\end{equation*}
and they satisfy
\begin{equation}\label{4-8}
\frac{2a_1(p-m+1)}{p+m-1}<2,
\quad
\frac{pa_2}{p+m-1}<2,
\quad\text{for any }p>p_*\text{ and }m>0.
\end{equation}
According to \eqref{4-5}-\eqref{4-8} and Young's inequality, we gain that
\begin{equation}\label{4-9}
\frac{\mathrm{d}}{\mathrm{d}t}\int_\Omega u^p
+\left(\int_\Omega u^p\right)^{\frac{1}{2}}
\leq c_{18}(p),
\quad\text{for all }t\in(0,T_{\max}).
\end{equation}
\eqref{4-2} follows from \eqref{4-9} and Lemma \ref{le2.2}.
\begin{itemize}
\item [$(ii)$]$\sigma\in\mathbb{R}$ and $m=0$.
\end{itemize}
Once more using \eqref{1-15}, \eqref{p}, \eqref{4-4}, Young's inequality and integrating by parts on the first equation of \eqref{wen}, we derive that
\begin{align}\label{4-10}
&\frac{1}{p}\frac{\mathrm{d}}{\mathrm{d}t}\int_\Omega u^p
+k_D(p-1)\int_\Omega u^{p-3}|\nabla u|^2
\nonumber
\\
\leq&\frac{1}{p}\frac{\mathrm{d}}{\mathrm{d}t}\int_\Omega u^p
+(p-1)\int_\Omega u^{p-2}D(u)|\nabla u|^2
\nonumber
\\
=&(p-1)\int_\Omega u^{p-1}(1+\left|\nabla v\right|^2)^{\sigma}\nabla u\cdot\nabla v
\nonumber
\\
\leq&c_{19}(p)\left|\int_\Omega u^{p-1}\nabla u\cdot\nabla v\right|
\nonumber
\\
=&c_{20}(p)\left|-\int_\Omega u^p\Delta v\right|
\nonumber
\\
\leq&\varepsilon_1\int_\Omega u^{p+1}
+c_{21}(p,\varepsilon_1)\int_\Omega\left|\Delta v\right|^{p+1},
\quad\text{for all }t\in(0,T_{\max}),
\end{align}
where $\varepsilon_1>0$ is a constant to be chosen later.
Use \eqref{4-10}, the well-known regularity theory for elliptic problems (\cite{WP}) and the second equation of \eqref{wen} to yield that
\begin{align}\label{4-11}
&\frac{1}{p}\frac{\mathrm{d}}{\mathrm{d}t}\int_\Omega u^p
+k_D(p-1)\int_\Omega u^{p-3}|\nabla u|^2
\nonumber
\\
\leq&\varepsilon_1\int_\Omega u^{p+1}
+c_{21}(p,\varepsilon_1)\int_\Omega\left|\Delta v\right|^{p+1}
\nonumber
\\
\leq&\varepsilon_1\int_\Omega u^{p+1}
+c_{22}(p,\varepsilon_1)\int_\Omega w^{p+1},
\quad\text{for all }t\in(0,T_{\max}).
\end{align}
In light of the third equation of \eqref{wen} and Young's inequality, we assert that
\begin{align}\label{4-12}
\frac{1}{p+1}\frac{\mathrm{d}}{\mathrm{d}t}\int_\Omega w^{p+1}
+\int_\Omega w^{p+1}
=&\int_\Omega w^pu
\nonumber
\\
\leq&\varepsilon_1\int_\Omega u^{p+1}
+c_{23}(p,\varepsilon_1)\int_\Omega w^{p+1},
\end{align}
for all $t\in(0,T_{\max})$. Adding \eqref{4-11} and \eqref{4-12} leads to
\begin{align}\label{4-13}
&\frac{\mathrm{d}}{\mathrm{d}t}\int_\Omega\left(u^p+w^{p+1}\right)
+\left\|\nabla u^{\frac{p-1}{2}}\right\|_{L^2}^2
\nonumber
\\
\leq&c_{24}(p)\varepsilon_1\int_\Omega u^{p+1}
+c_{25}(p,\varepsilon_1)\int_\Omega w^{p+1},
\quad\text{for all }t\in(0,T_{\max}).
\end{align}
Applying the Gagliardo-Nirenberg inequality and \eqref{2-3} again, we find that
\begin{align}\label{4-14}
&c_{24}(p)\varepsilon_1\int_\Omega u^{p+1}
\nonumber
\\
=&c_{24}(p)\varepsilon_1\left\|u^{\frac{p-1}{2}}\right\|_{L^{\frac{2(p+1)}{p-1}}}^{\frac{2(p+1)}{p-1}}
\nonumber
\\
\leq&c_{26}(p)\varepsilon_1\left\|\nabla u^{\frac{p-1}{2}}\right\|_{L^2}^{\frac{2(p+1)a_3}{p-1}}
\left\|u^{\frac{p-1}{2}}\right\|_{L^\frac{2}{p-1}}^{\frac{2(p+1)(1-a_3)}{p-1}}
+c_{26}(p)\varepsilon_1\left\|u^{\frac{p-1}{2}}\right\|_{L^\frac{2}{p-1}}^{\frac{2(p+1)}{p-1}}
\nonumber
\\
\leq&c_{27}(p)\varepsilon_1\left\|\nabla u^{\frac{p-1}{2}}\right\|_{L^2}^{\frac{2(p+1)a_3}{p-1}}
+c_{28}(p)\varepsilon_1,
\end{align}
where
\begin{equation*}
a_3=\frac{p-1}{p+1}\in(0,1)
\quad\text{and}\quad
\frac{2(p+1)a_3}{p-1}=2,
\quad\text{for any } p>p_*.
\end{equation*}
Employing \eqref{4-13} and \eqref{4-14}, and letting $\varepsilon_1$ be sufficiently small, we estimate that
\begin{align*}
\frac{\mathrm{d}}{\mathrm{d}t}\int_\Omega\left(u^p+w^{p+1}\right)
\leq c_{29}(p)\int_\Omega w^{p+1}+c_{30}(p),
\quad\text{for all }t\in(0,T_{\max}),
\end{align*}
Then we arrive at \eqref{4-3}.
\end{proof}

\begin{lemma}\label{le4.2}
Let $\Omega\subset\mathbb{R}$ be a bounded smooth domain. Assume that \eqref{1-16} is valid and that $D\in C^2([0,\infty))$ satisfying \eqref{D}, \eqref{1-15} with parameters $\xi_0>0,~k_D>0,~m\in\mathbb{R}~\text{and~}\sigma\in\mathbb{R}$. Then for any $T\in(0,T_{\max})$,  we have the following results:
\begin{itemize}
\item [$(i)$] If $\sigma\in\mathbb{R}$ and $m>0$, then there exists a constant $C_{13}(p)>0$ such that
\begin{equation}\label{4-15}
\left\|u(\cdot,t)\right\|_{L^{\infty}}
\leq C_{13}(p),
\quad\text{for all}~t\in(0,T_{\max}).
\end{equation}
\end{itemize}

\begin{itemize}
\item [$(ii)$] If $\sigma\in\mathbb{R}$ and $m=0$, then there exists a constant $C_{14}(T,p)>0$ such that
\begin{equation}\label{4-16}
\left\|u(\cdot,t)\right\|_{L^{\infty}}
\leq C_{14}(T,p),
\quad\text{for all}~t\in(0,T).
\end{equation}
\end{itemize}
\end{lemma}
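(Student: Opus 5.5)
We upgrade the $L^p$ bounds of Lemma \ref{le4.1} to $L^\infty$ bounds by a Moser--Alikakos iteration. In one space dimension the chemotactic flux is already under control: by \eqref{4-4} we have $\|v(\cdot,t)\|_{W^{1,\infty}}\le c_8$. Hence the drift $F:=(1+|v_x|^2)^{\sigma}v_x$ satisfies $\|F\|_{L^\infty}\le c$ for every $\sigma\in\mathbb{R}$ (use $(1+|v_x|^2)^\sigma\le 1$ if $\sigma<0$ and $\le(1+c_8^2)^\sigma$ otherwise). The first equation then reads $u_t=(D(u)u_x)_x-(uF)_x$ with bounded $F$. As in Lemma \ref{le4.1}, we may restrict attention to the set where $u>\xi_0$, on which $D(u)\ge k_Du^{m-1}$.

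\textbf{Testing step.} For $p\ge p_*$, test with $u^{p-1}$ and use Young's inequality exactly as in Lemma \ref{le4.1}:
\begin{equation*}
\frac{\mathrm{d}}{\mathrm{d}t}\int_\Omega u^p+c\,p^{-1}\left\|\left(u^{\frac{p+m-1}{2}}\right)_x\right\|_{L^2}^2\le C p^2\int_\Omega u^{p-m+1}.
\end{equation*}
Add $\int_\Omega u^p$ to both sides. In $N=1$, the Gagliardo--Nirenberg inequality (with $\|\cdot\|_{L^1}$ of $u^{(p+m-1)/2}$ as the lower-order norm, i.e.\ $\int u^{p/2+\cdots}$ expressed through the previous level) bounds the right-hand side in terms of a small multiple of the gradient term plus $C p^{\kappa}\big(\sup_t\int_\Omega u^{q}\big)^{\beta}$, where $q\approx p/2$ and $\kappa,\beta$ are fixed. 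Here $m>0$ guarantees $p-m+1<p+m-1+2$, so the exponents are subcritical; this is the same inequality \eqref{4-8} that was used in Lemma \ref{le4.1}.

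\textbf{Iteration.} ODE comparison then gives $\sup_t\int_\Omega u^{p_k}\le \max\{\int u_0^{p_k},\,C b^k(\sup_t\int u^{p_{k-1}})^{2}\}$ with $p_k=2^kp_0$. The standard Alikakos lemma converts this into $\sup_t\|u\|_{L^{p_k}}\le C$ uniformly in $k$, and letting $k\to\infty$ yields \eqref{4-15}.

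\textbf{Alternative for part (i).} For $m>0$ there is a simpler route. Lemma \ref{le4.1}$(i)$ gives $u\in L^\infty(L^p)$ for some $p>1$, so $uF\in L^\infty(L^p)$. A quasilinear parabolic $L^\infty$ result for divergence-form equations with bounded lower-order flux (e.g.\ Tao--Winkler's Moser lemma) then applies directly, since $D(u)\ge k_D u^{m-1}$ with $m>0$. I would cite this and keep the iteration above only as a check.

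\textbf{Part (ii), $m=0$.} The same scheme works on $(0,T)$. Now $\int u^{p+1}$ is critical against $\|(u^{(p-1)/2})_x\|^2$, which is the equality $\tfrac{2(p+1)a_3}{p-1}=2$. I would therefore absorb via the $\Delta v$ trick of \eqref{4-10}--\eqref{4-13}: add $w^{p+1}$, note that $w\le \|w_0\|_\infty+\int_0^t u$ pointwise, and use Gronwall at each level. This keeps the constants $T$-dependent but finite and gives \eqref{4-16}.

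\textbf{Main obstacle.} The delicate point is tracking the $p$-dependence of the constants through the iteration, in particular the degeneracy $D\sim u^{m-1}$, which is singular or degenerate as $m$ changes. I expect it is cleanest to pass to $z=u^{(p+m-1)/2}$ and run the standard Alikakos bookkeeping.
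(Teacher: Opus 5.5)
Your main line matches the paper's. The paper proves this lemma only by invoking a Moser-type iteration (referring to Tao--Winkler) on top of the $L^p$ bounds of Lemma \ref{le4.1}, using that in one dimension the drift $F=(1+|v_x|^2)^\sigma v_x$ is bounded by \eqref{4-4}. Your ``alternative for part (i)'' is exactly this, with $p>3=N+2$ presumably giving the flux $uF$ the integrability such a lemma requires, and your iteration for (i) is a sound sketch of it.

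What you should correct is part (ii). The criticality you invoke, $\frac{2(p+1)a_3}{p-1}=2$, is criticality relative to the mass $\|u\|_{L^1}$, which is the lower-order norm used in Lemma \ref{le4.1}. It does not occur in the iteration. For $m=0$, set $z=u^{\frac{p-1}{2}}$, so that $\int_\Omega u^{p+1}=\|z\|_{L^r}^r$ with $r=\frac{2(p+1)}{p-1}$, and take $\int_\Omega u^q$ as the lower-order quantity. The one-dimensional Gagliardo--Nirenberg inequality then produces the power
\[
ar=\frac{2(p+1-q)}{p-1+q}
\]
of $\|z_x\|_{L^2}$. This is $<2$ for every $q>1$ and equals $2$ only at $q=1$. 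Lemma \ref{le4.1}(ii) already supplies $\sup_{t\in(0,T)}\|u\|_{L^{p_0}}\le C_{12}(p_0,T)$ for some $p_0>3$, and your scheme uses $q\approx p/2$. So the iteration for $m=0$ is subcritical and runs verbatim as in (i) on $(0,T)$; the $T$-dependence enters only through the starting level.

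Similarly, $m>0$ is not what makes the iteration subcritical. With $\int_\Omega u^{(p+m-1)/2}$ as lower-order norm you only need $p>3-3m$. The hypothesis $m>0$ matters only for the time-uniform starting bound of Lemma \ref{le4.1}(i).

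You should drop the detour you propose instead, namely the $\Delta v$ trick of \eqref{4-10}--\eqref{4-13} with $w^{p+1}$ added and Gronwall at every level. It is unnecessary, and as set up there it also fails. Young's inequality puts a coefficient of order $(Cp)^{p}\varepsilon_1^{-p}$ in front of $\int_\Omega w^{p+1}$. Hence the Gronwall rate $A_k$ at level $p_k$ grows much faster than $p_k$, and the factors $e^{A_kT/p_k}$ are unbounded in $k$. That destroys exactly the $k$-uniformity the Alikakos bookkeeping needs.
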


\begin{proof}
By applying the Moser-type iteration technique, we are able to derive the results. For a comprehensive derivation process, the reader is referred to \cite{TW2012}.
\end{proof}

\subsection{Global existence of solution in $N\geq2$.}\label{se4.2}

\begin{lemma}\label{le4.3}
Let $\Omega\subset\mathbb{R}^N~(N\geq 2)$ be a bounded smooth domain and let \eqref{1-16} hold. Assume that $D\in C^2([0,\infty))$ satisfies \eqref{D}, \eqref{1-15} with parameters $\xi_0>0,~k_D>0,~m\in\mathbb{R}~\text{and~}\sigma\in\mathbb{R}$. Define
\begin{align}\label{4-17}
\gamma_*:=\max\Biggl\{3,~N+m-1\Biggr\}.
\end{align}
Then for any $T\in(0,T_{\max})$ and sufficiently large $\gamma>\gamma_*$, we obtain the following conclusion:
\begin{itemize}
\item [$(i)$] If $m>1-\frac{1}{N}$ and $\sigma<\frac{mN+2-2N}{2N-2}$, then there exists a constant $C_{15}(\gamma)>0$ such that
\begin{equation}\label{4-18}
\left\|u(\cdot,t)\right\|_{L^\gamma}
\leq C_{15}(\gamma),
\quad\text{for all}~t\in(0,T_{\max}).
\end{equation}
\end{itemize}
\begin{itemize}
\item [$(ii)$] If $m>1-\frac{1}{N}$ and $\sigma\leq\frac{mN+2-2N}{2N-2}$, then there exists a constant $C_{16}(\gamma,T)>0$ such that
\begin{equation}\label{4-19}
\left\|u(\cdot,t)\right\|_{L^\gamma}
\leq C_{16}(\gamma,T),
\quad\text{for all}~t\in(0,T).
\end{equation}
\end{itemize}
\end{lemma}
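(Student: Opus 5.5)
We will test the first equation of \eqref{wen} against $u^{\gamma-1}$ and couple it with a test of the third equation against $w^{\gamma'}$ for a suitable $\gamma'$, in the spirit of the proof of Lemma \ref{le4.1}$(ii)$. Since $|\nabla v|$ is no longer bounded for $N\geq2$, the flux limitation has to be exploited directly. As in Lemma \ref{le4.1}, we reduce to the region $\{u>\xi_0\}$, where \eqref{1-15} is available. Testing and then applying Young's inequality give
\begin{align*}
\frac{1}{\gamma}\frac{\mathrm{d}}{\mathrm{d}t}\int_\Omega u^\gamma
+\frac{k_D(\gamma-1)}{2}\int_\Omega u^{m+\gamma-3}|\nabla u|^2
\leq c\int_\Omega u^{\gamma-m+1}\left(1+|\nabla v|^2\right)^{2\sigma+1}.
\end{align*}
When $2\sigma+1\leq0$ the right-hand side reduces to $c\int_\Omega u^{\gamma-m+1}$. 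This term is controlled by Gagliardo--Nirenberg together with the mass bound \eqref{2-3} as soon as $\gamma-m+1<\gamma+m-1+\frac{2}{N}$, i.e.\ $m>1-\frac1N$. We then conclude as in \eqref{4-5}--\eqref{4-9}, using Lemma \ref{le2.2}.

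When $2\sigma+1>0$, set $q:=2\sigma+1$. We split the right-hand side with Hölder's inequality,
\[
\int_\Omega u^{\gamma-m+1}|\nabla v|^{2q}\leq \|u\|_{L^{(\gamma-m+1)\lambda}}^{\gamma-m+1}\,\|\nabla v\|_{L^{2q\lambda'}}^{2q}.
\]
Elliptic regularity for $-\Delta v+v=w$ gives $\|\nabla v\|_{L^{r}}\leq c\|w\|_{L^{\frac{Nr}{N+r}}}$ for $r$ large, with $\|w\|_{L^1}$ bounded by \eqref{3-19}. In addition, $\frac{\mathrm{d}}{\mathrm{d}t}\int_\Omega w^{k}+k\int_\Omega w^k\leq k\int_\Omega w^{k-1}u$, so $L^k$ bounds on $w$ follow from those on $u$ through Young's inequality with a small parameter. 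We therefore work with the combined functional $y(t)=\int_\Omega u^\gamma+\int_\Omega w^{\gamma}$ and estimate the resulting $\int_\Omega w^{\cdot}$ term by $\varepsilon_1\int_\Omega u^{\gamma+m-1+\frac2N}$ plus a lower-order term. The integral $\int_\Omega u^{\gamma+m-1+\frac2N}$ is exactly the quantity that Gagliardo--Nirenberg with the $L^1$ bound dominates by $\|\nabla u^{\frac{\gamma+m-1}{2}}\|_{L^2}^2$.

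We expect the main obstacle to be the exponent bookkeeping. After the Hölder and Young splittings, the total homogeneity in $u$ of the cross term behaves like $\gamma-m+1+2q\cdot\frac{N-1}{N}\cdot(\ldots)$. The requirement that this homogeneity be strictly smaller than $\gamma+m-1+\frac2N$ should reduce, as $\gamma\to\infty$, precisely to $\sigma<\frac{mN+2-2N}{2N-2}$. This is the reason the lemma asks for $\gamma$ sufficiently large, beyond $\gamma_*$. In case $(i)$, strict inequality leaves a sublinear power, so exactly as in \eqref{4-9} we obtain
\[
y'+y^{\kappa}\leq c \quad\text{for some }\kappa\in(0,1),
\]
and Lemma \ref{le2.2} gives \eqref{4-18}. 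In the critical case $(ii)$ the exponents match exactly, and we obtain only $y'\leq c\,y+c$ after absorbing the gradient term through a careful choice of Young constants. This uses the fact that the gradient $L^{N/(N-1)}$-type bound from Lemma \ref{le2.3}-like estimates enters linearly. Grönwall's inequality then yields a bound depending on $T$, which is \eqref{4-19}. The delicate point is verifying that in the equality case the coefficient in front of the top-order term can still be made smaller than the diffusion term. If this fails, we would first try increasing $\gamma$ and inserting an intermediate $L^{\frac{N}{2}}$-type interpolation for $w$.
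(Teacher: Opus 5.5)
Your case $\sigma\le-\frac12$ is the paper's argument. For $-\frac12<\sigma<\frac{mN+2-2N}{2N-2}$, the functional $\int_\Omega u^\gamma+\int_\Omega w^\gamma$ can be made to close for large $\gamma$. It does so only through the ODE $\frac{\mathrm{d}}{\mathrm{d}t}\int_\Omega w^\gamma+\int_\Omega w^\gamma\le\int_\Omega u^\gamma$ inside the functional. Contrary to your phrasing, $\int_\Omega w^k$ at time $t$ is never bounded by an integral of $u$ at the same time $t$.

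The genuine gap is the critical case (ii). Let $Z=\int_\Omega w^\gamma$, $D=\|\nabla u^{\frac{\gamma+m-1}{2}}\|_{L^2}^2$, and let $\lambda$ be your H\"older exponent. Your chain is H\"older, then Gagliardo--Nirenberg with the mass bound, then $\|\nabla v\|_{L^r}\le C\|w\|_{L^{\frac{Nr}{N+r}}}$, then interpolation between $L^1$ and $L^\gamma$. It bounds the cross term by $CD^{\alpha_u}Z^{\alpha_w}$, and at $\sigma=\frac{mN+2-2N}{2N-2}$ one computes
\[
1-\alpha_u=\frac{Y+\frac1\lambda}{\gamma+m-2+\frac2N},\qquad \alpha_w=\frac{Y+\frac1\lambda}{\gamma-1},\qquad Y:=2m-3+\frac2N .
\]
Here $Y+\frac1\lambda>0$ is forced, since otherwise $\alpha_u\ge1$. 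Young's inequality then gives $\varepsilon D+C_\varepsilon Z^{\beta}$ with $\beta=\frac{\gamma+m-2+\frac2N}{\gamma-1}>1$, because $m>1-\frac1N$. This holds independently of $\lambda$ and for every $\gamma$.

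The $w$-equation supplies only the linear damping $-Z$, so you arrive at $y'\le Cy^\beta+C$. That gives no bound on an arbitrary $(0,T)$, and your claimed $y'\le cy+c$ does not follow. Enlarging $\gamma$ only pushes $\beta\to1^+$. The reason is that $\int_\Omega w^\gamma$ has the wrong homogeneity compared with the critical quantity $\int_\Omega u^{\gamma+m-1+\frac2N}$.

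The missing idea is to match the $w$-exponent to the cross term. The paper applies Young with $q=1+\frac{2(1+2\sigma)(1-\frac1N)}{\gamma-m+1}$, which produces $\varepsilon_3\int_\Omega u^{P}+C\int_\Omega|\nabla v|^{\frac{NP}{N-1}}$, where $P=\gamma-m+1+2(1+2\sigma)(1-\frac1N)$. Then $W^{2,r}$-regularity, Sobolev embedding and interpolation between $L^1$ and $L^P$ with weight exactly $b=\frac1N$ give $C\int_\Omega w^P$, which is linear in $\int_\Omega w^P$. Testing the third equation with $w^{P-1}$ closes the estimate for $\int_\Omega u^\gamma+\int_\Omega w^P$.

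Since $P<\gamma+m-1+\frac2N$ if and only if $\sigma<\frac{mN+2-2N}{2N-2}$, for every $\gamma$, with equality at the critical value, both cases follow:
\begin{itemize}
\item Case (i) uses $\varepsilon_3$ large. The net coefficient of $\int_\Omega w^P$ becomes negative, and the subcritical $\int_\Omega u^P$ is absorbed for any coefficient.
\item Case (ii) uses $\varepsilon_3$ small, so the exactly critical $\int_\Omega u^P$ is absorbed into the dissipation, and then Gr\"onwall applies to $y'\le cy+c$.
\end{itemize}
Finally, Lemma \ref{le2.3} is a radial estimate and cannot be invoked in a general domain.
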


\begin{proof}
Motivated by \cite{ZY}, for any $m>1-\frac{1}{N}$, the proof is divided into two cases: $\sigma\in(-\infty,-\frac{1}{2}]$ and $\sigma\in(-\frac{1}{2},\frac{mN+2-2N}{2N-2}]$. We may assume without loss of generality that $u>\xi_0$. Otherwise, the boundedness of $u$ would have been already established.
\begin{itemize}
\item~$m>1-\frac{1}{N}$ and $\sigma\leq-\frac{1}{2}$.
\end{itemize}
By means of \eqref{1-15}, \eqref{4-17}, Young's inequality, integrating by parts and the first equation of \eqref{wen}, we infer that
\begin{align*}
&\frac{1}{\gamma}\frac{\mathrm{d}}{\mathrm{d}t}\int_\Omega u^\gamma
+k_D(\gamma-1)\int_\Omega u^{m+\gamma-3}|\nabla u|^2
\nonumber
\\
\leq&\frac{1}{\gamma}\frac{\mathrm{d}}{\mathrm{d}t}\int_\Omega u^\gamma
+(\gamma-1)\int_\Omega u^{\gamma-2}D(u)|\nabla u|^2
\nonumber
\\
=&(\gamma-1)\int_\Omega u^{\gamma-1}(1+\left|\nabla v\right|^2)^\sigma\nabla u\cdot\nabla v
\nonumber
\\
\leq&\frac{k_D(\gamma-1)}{2}\int_\Omega u^{m+\gamma-3}|\nabla u|^2
+\frac{\gamma-1}{2k_D}\int_\Omega u^{\gamma-m+1}\left(1+\left|\nabla v\right|^2\right)^{1+2\sigma}
\nonumber
\\
\leq&\frac{k_D(\gamma-1)}{2}\int_\Omega u^{m+\gamma-3}|\nabla u|^2
+\frac{\gamma-1}{2k_D}\int_\Omega u^{\gamma-m+1},
\end{align*}
for all $t\in(0,T_{\max})$ and $\sigma\leq-\frac{1}{2}$.
Then using $m>1-\frac{1}{N}$ and Young's inequality, we achieve that
\begin{align}\label{4-20}
&\frac{\mathrm{d}}{\mathrm{d}t}\int_\Omega u^\gamma
+\varepsilon_2\left(\int_\Omega u^\gamma\right)^{\frac{1}{2}}
+c_{31}(\gamma)\left\|\nabla u^{\frac{\gamma+m-1}{2}}\right\|_{L^2}^2
\nonumber
\\
\leq&\varepsilon_2\int_\Omega u^{\gamma+m-1+\frac{2}{N}}
+\varepsilon_2\left(\int_\Omega u^\gamma\right)^{\frac{1}{2}}
+c_{32}(\gamma,\varepsilon_2),
\end{align}
for all $t\in(0,T_{\max})$, where $\varepsilon_2>0$ is a constant to be chosen later. Employing the Gagliardo-Nirenberg inequality and \eqref{2-3}, we gain that
\begin{align}\label{4-21}
\varepsilon_2\int_\Omega u^{\gamma+m-1+\frac{2}{N}}
=&\varepsilon_2\left\|u^{\frac{\gamma+m-1}{2}}\right\|_{L^{\frac{2(\gamma+m-1+\frac{2}{N})}{\gamma+m-1}}}^{\frac{2(\gamma+m-1+\frac{2}{N})}{\gamma+m-1}}
\nonumber
\\
\leq&c_{33}(\gamma)\varepsilon_2\left\|\nabla u^{\frac{\gamma+m-1}{2}}\right\|_{L^2}^{\frac{2(\gamma+m-1+\frac{2}{N})a_4}{\gamma+m-1}}
\left\|u^{\frac{\gamma+m-1}{2}}\right\|_{\frac{2}{\gamma+m-1}}^{\frac{2(\gamma+m-1+\frac{2}{N})(1-a_4)}{\gamma+m-1}}
\nonumber
\\
&+c_{33}(\gamma)\varepsilon_2\left\|u^{\frac{\gamma+m-1}{2}}\right\|_{\frac{2}{\gamma+m-1}}^{\frac{2(\gamma+m-1+\frac{2}{N})}{\gamma+m-1}}
\nonumber
\\
\leq&c_{34}(\gamma)\varepsilon_2\left\|\nabla u^{\frac{\gamma+m-1}{2}}\right\|_{L^2}^{\frac{2(\gamma+m-1+\frac{2}{N})a_4}{\gamma+m-1}}
+c_{35}(\gamma)\varepsilon_2,
\end{align}
and
\begin{align}\label{4-22}
&\varepsilon_2\left(\int_\Omega u^\gamma\right)^{\frac{1}{2}}
\nonumber
\\
=&\varepsilon_2\left\|u^{\frac{\gamma+m-1}{2}}\right\|
_{L^\frac{2\gamma}{\gamma+m-1}}
^{\frac{\gamma}{\gamma+m-1}}
\nonumber
\\
\leq&c_{36}(\gamma)\varepsilon_2 \left\|\nabla u^{\frac{\gamma+m-1}{2}}\right\|_{L^2}^{\frac{\gamma a_5}{\gamma+m-1}}
\left\|u^{\frac{\gamma+m-1}{2}}\right\|_{L^\frac{2}{\gamma+m-1}}^{\frac{\gamma(1-a_5)}{\gamma+m-1}}
+c_{36}(\gamma)\varepsilon_2\left\|u^{\frac{\gamma+m-1}{2}}\right\|
_{L^\frac{2}{\gamma+m-1}}
^{\frac{\gamma}{\gamma+m-1}}
\nonumber
\\
\leq&c_{37}(\gamma)\varepsilon_2 \left\|\nabla u^{\frac{\gamma+m-1}{2}}\right\|_{L^2}^{\frac{\gamma a_5}{\gamma+m-1}}
+c_{38}(\gamma)\varepsilon_2.
\end{align}
For any $\gamma>\gamma_*$,
\begin{equation*}
a_4=\frac{\gamma+m-1}{\gamma+m-1+\frac{2}{N}}\in(0,1),
\quad
a_5=\frac{(\gamma-1)(\gamma+m-1)}{\gamma(\gamma+m-2+\frac{2}{N})}\in(0,1),
\end{equation*}
and they satisfy
\begin{equation}\label{4-23}
\frac{2(\gamma+m-1+\frac{2}{N})a_4}{\gamma+m-1}=2,
\quad
\frac{\gamma a_5}{\gamma+m-1}<2,
\end{equation}
for any $\gamma>\gamma_*$ and $m>1-\frac{1}{N}$.
Use \eqref{4-20}-\eqref{4-23}, and choose an appropriately small $\varepsilon_2$ to arrive at
\begin{align*}
\frac{\mathrm{d}}{\mathrm{d}t}\int_\Omega u^\gamma
+c_{39}\left(\int_\Omega u^\gamma\right)^{\frac{1}{2}}
\leq c_{40}(\gamma),
\quad\text{for all }t\in(0,T_{\max}).
\end{align*}
Invoking Lemma \ref{le2.2}, we arrive at
\begin{equation}\label{4-24}
\int_\Omega u^\gamma\leq c_{41}(\gamma),
\quad
\text{for all }t\in(0,T_{\max}).
\end{equation}

\begin{itemize}
\item~$m>1-\frac{1}{N}$ and $-\frac{1}{2}<\sigma\leq\frac{mN+2-2N}{2N-2}$.
\end{itemize}
Applying \eqref{1-15}, \eqref{4-17}, Young's inequality, integrating by parts and the first equation of \eqref{wen}, we see that
\begin{align*}
&\frac{1}{\gamma}\frac{\mathrm{d}}{\mathrm{d}t}\int_\Omega u^\gamma
+k_D(\gamma-1)\int_\Omega u^{m+\gamma-3}|\nabla u|^2
\nonumber
\\
\leq&\frac{1}{\gamma}\frac{\mathrm{d}}{\mathrm{d}t}\int_\Omega u^\gamma
+(\gamma-1)\int_\Omega u^{\gamma-2}D(u)|\nabla u|^2
\nonumber
\\
=&(\gamma-1)\int_\Omega u^{\gamma-1}(1+\left|\nabla v\right|^2)^\sigma\nabla u\cdot\nabla v
\nonumber
\\
\leq&\frac{k_D(\gamma-1)}{2}\int_\Omega u^{m+\gamma-3}|\nabla u|^2
+\frac{\gamma-1}{2k_D}\int_\Omega u^{\gamma-m+1}\left(1+\left|\nabla v\right|^2\right)^{1+2\sigma}
\nonumber
\\
\leq&\frac{k_D(\gamma-1)}{2}\int_\Omega u^{m+\gamma-3}|\nabla u|^2
+c_{42}(\gamma)\int_\Omega u^{\gamma-m+1}\left(1+\left|\nabla v\right|^{2(1+2\sigma)}\right),
\end{align*}
for all $t\in(0,T_{\max})$ and $\sigma>-\frac{1}{2}$. Relying on Young's inequality, we deduce that
\begin{align}\label{4-25}
&\frac{\mathrm{d}}{\mathrm{d}t}\int_\Omega u^\gamma
+c_{43}(\gamma)\left\|\nabla u^{\frac{\gamma+m-1}{2}}\right\|_{L^2}^2
\nonumber
\\
\leq&\varepsilon_3\int_\Omega u^{(\gamma-m+1)q}
+\frac{c_{44}(\gamma)}{\varepsilon_3}\int_\Omega\left|\nabla v\right|^{\frac{2q(1+2\sigma)}{q-1}}
+c_{45}(\gamma,\varepsilon_3),
\end{align}
for all $t\in(0,T_{\max})$, where $\varepsilon_3>0$ denotes a constant that will be chosen later, and where
\begin{equation}\label{q}
q=1+\frac{2(1+2\sigma)(1-\frac{1}{N})}{\gamma-m+1}>1,
\quad\text{for any }\gamma>\gamma_*\text{ and }\sigma>-\frac{1}{2}.
\end{equation}
By applying the regularity theory for elliptic problems (\cite{WP}) again and using the embedding theorem, the second equation of \eqref{wen}, we are able to derive that
\begin{align}\label{4-27}
\frac{c_{44}(\gamma)}{\varepsilon_3}\int_\Omega\left|\nabla v\right|^{\frac{2q(1+2\sigma)}{q-1}}
\leq& \frac{c_{46}(\gamma)}{\varepsilon_3}\left\|v\right\|_{W^{2,\frac{2Nq(1+2\sigma)}{(N+2+4\sigma)q-N}}}^{\frac{2q(1+2\sigma)}{q-1}}
\nonumber
\\
\leq& \frac{c_{47}(\gamma)}{\varepsilon_3}\left\|w\right\|_{L^{\frac{2Nq(1+2\sigma)}{(N+2+4\sigma)q-N}}}^{\frac{2q(1+2\sigma)}{q-1}}.
\end{align}
Making use of \eqref{2-3} and \eqref{3-19}, we assert that
\begin{equation}\label{4-28}
\left\|w(\cdot,t)\right\|_{L^1}\leq c_{48},
\quad\text{for all }t\in(0,T_{\max}).
\end{equation}
For the definition of $q$ in \eqref{q} and any sufficiently large $\gamma>\gamma_*$, we know that
\begin{equation*}
1<\frac{2Nq(1+2\sigma)}{(N+2+4\sigma)q-N}<(\gamma-m+1)q,
\quad\text{for any }\sigma>-\frac{1}{2}.
\end{equation*}
In light of interpolation inequality and \eqref{4-28}, we yield that
\begin{align}\label{4-29}
\frac{c_{47}(\gamma)}{\varepsilon_3}\left\|w\right\|_{L^{\frac{2Nq(1+2\sigma)}{(N+2+4\sigma)q-N}}}^{\frac{2q(1+2\sigma)}{q-1}}
\leq& \frac{c_{47}(\gamma)}{\varepsilon_3}\left\|w\right\|_{L^1}^{\frac{2qb(1+2\sigma)}{q-1}}
\left\|w\right\|_{L^{(\gamma-m+1)q}}^{\frac{2q(1-b)(1+2\sigma)}{q-1}}
\nonumber
\\
\leq&\frac{c_{49}(\gamma)}{\varepsilon_3}\left\|w\right\|_{L^{(\gamma-m+1)q}}^{\frac{2q(1-b)(1+2\sigma)}{q-1}},
\end{align}
where
\begin{equation}\label{4-30}
b=\frac{1}{N},
\end{equation}
by the definition of \eqref{q}.
According to \eqref{4-25}-\eqref{4-27}, \eqref{4-29} and \eqref{4-30}, we conclude that
\begin{align}\label{4-31}
&\frac{\mathrm{d}}{\mathrm{d}t}\int_\Omega u^\gamma
+c_{50}(\gamma)\left\|\nabla u^{\frac{\gamma+m-1}{2}}\right\|_{L^2}^2
\nonumber
\\
\leq&\varepsilon_3\int_\Omega u^{(\gamma-m+1)q}
+\frac{c_{49}(\gamma)}{\varepsilon_3}\left\|w\right\|_{L^{(\gamma-m+1)q}}^{\frac{2q(1-b)(1+2\sigma)}{q-1}}
+c_{45}(\gamma,\varepsilon_3)
\nonumber
\\
=&\varepsilon_3\int_\Omega u^{\gamma-m+1+2(1+2\sigma)(1-\frac{1}{N})}
+\frac{c_{49}(\gamma)}{\varepsilon_3}\left\|w\right\|_{L^{\gamma-m+1+2(1+2\sigma)(1-\frac{1}{N})}}^{\gamma-m+1+2(1+2\sigma)(1-\frac{1}{N})}
\nonumber
\\
&+c_{45}(\gamma,\varepsilon_3),
\quad\text{for all }t\in(0,T_{\max}).
\end{align}
Recalling the third equation of \eqref{wen} and Young's inequality, we establish that
\begin{align}\label{4-32}
&\frac{1}{\gamma-m+1+2(1+2\sigma)(1-\frac{1}{N})}\frac{\mathrm{d}}{\mathrm{d}t}\int_\Omega w^{\gamma-m+1+2(1+2\sigma)(1-\frac{1}{N})}
\nonumber
\\
=&\int_\Omega w^{\gamma-m+2(1+2\sigma)(1-\frac{1}{N})}u
-\int_\Omega w^{\gamma-m+1+2(1+2\sigma)(1-\frac{1}{N})}
\nonumber
\\
\leq&\varepsilon_3\int_\Omega u^{\gamma-m+1+2(1+2\sigma)(1-\frac{1}{N})}
+(\frac{c_{51}(\gamma)}{\varepsilon_3}-1)\int_\Omega w^{\gamma-m+1+2(1+2\sigma)(1-\frac{1}{N})},
\end{align}
for all $t\in(0,T_{\max})$.
Add \eqref{4-31} and \eqref{4-32} to gain that
\begin{align}\label{4-33}
&\frac{\mathrm{d}}{\mathrm{d}t}\int_\Omega\left(u^\gamma+w^{\gamma-m+1+2(1+2\sigma)(1-\frac{1}{N})}\right)
+c_{52}(\gamma)\left\|\nabla u^{\frac{\gamma+m-1}{2}}\right\|_{L^2}^2
\nonumber
\\
\leq&c_{53}(\gamma)\varepsilon_3\int_\Omega u^{\gamma-m+1+2(1+2\sigma)(1-\frac{1}{N})}
\nonumber
\\
&+c_{53}(\gamma)(\frac{c_{51}(\gamma)}{\varepsilon_3}-1)\left\|w\right\|_{L^{\gamma-m+1+2(1+2\sigma)(1-\frac{1}{N})}}^{\gamma-m+1+2(1+2\sigma)(1-\frac{1}{N})}
+c_{54}(\gamma,\varepsilon_3),
\end{align}
for all $t\in(0,T_{\max})$.
Applying the Gagliardo-Nirenberg inequality and \eqref{2-3}, we gain that
\begin{align}\label{4-34}
&c_{53}(\gamma)\varepsilon_3\int_\Omega u^{\gamma-m+1+2(1+2\sigma)(1-\frac{1}{N})}
\nonumber
\\
=&c_{53}(\gamma)\varepsilon_3\left\|u^{\frac{\gamma+m-1}{2}}\right\|_{L^{\frac{2\left(\gamma-m+1+2(1+2\sigma)(1-\frac{1}{N})\right)}{\gamma+m-1}}}^{\frac{2\left(\gamma-m+1+2(1+2\sigma)(1-\frac{1}{N})\right)}{\gamma+m-1}}
\nonumber
\\
\leq&c_{55}(\gamma)\varepsilon_3\left\|\nabla u^{\frac{\gamma+m-1}{2}}\right\|_{L^2}^{\frac{2a_6\left(\gamma-m+1+2(1+2\sigma)(1-\frac{1}{N})\right)}{\gamma+m-1}}
\left\|u^{\frac{\gamma+m-1}{2}}\right\|_{L^\frac{2}{\gamma+m-1}}^{\frac{2(1-a_6)\left(\gamma-m+1+2(1+2\sigma)(1-\frac{1}{N})\right)}{\gamma+m-1}}
\nonumber
\\
&+c_{55}(\gamma)\varepsilon_3\left\|u^{\frac{\gamma+m-1}{2}}\right\|_{L^\frac{2}{\gamma+m-1}}^{\frac{2\left(\gamma-m+1+2(1+2\sigma)(1-\frac{1}{N})\right)}{\gamma+m-1}}
\nonumber
\\
\leq&c_{56}(\gamma)\varepsilon_3\left\|\nabla u^{\frac{\gamma+m-1}{2}}\right\|_{L^2}^{\frac{2a_6\left(\gamma-m+1+2(1+2\sigma)(1-\frac{1}{N})\right)}{\gamma+m-1}}
+c_{56}(\gamma)\varepsilon_3,
\end{align}
where
\begin{equation*}
a_6=\frac{(\gamma+m-1)\left(1-\frac{1}{\gamma-m+1+2(1+2\sigma)(1-\frac{1}{N})}\right)}{\gamma+m-2+\frac{2}{N}}\in(0,1),
\end{equation*}
for any $\sigma>-\frac{1}{2}$ and all sufficiently large $\gamma>\gamma_*$.
Note that $a_6$ satisfies
\begin{equation}\label{4-35}
\frac{2a_6\left(\gamma-m+1+2(1+2\sigma)(1-\frac{1}{N})\right)}{\gamma+m-1}<2,
\quad\text{for }\sigma<\frac{mN+2-2N}{2N-2},
\end{equation}
and
\begin{equation}\label{4-36}
\frac{2a_6\left(\gamma-m+1+2(1+2\sigma)(1-\frac{1}{N})\right)}{\gamma+m-1}=2,
\quad\text{for }\sigma=\frac{mN+2-2N}{2N-2}.
\end{equation}
\
\begin{itemize}
\item~$m>1-\frac{1}{N}$ and $\sigma<\frac{mN+2-2N}{2N-2}$.
\end{itemize}
Due to \eqref{4-33}, we have that
\begin{align*}
&\frac{\mathrm{d}}{\mathrm{d}t}\int_\Omega\left(u^\gamma+w^{\gamma-m+1+2(1+2\sigma)(1-\frac{1}{N})}\right)
+c_{52}(\gamma)\left\|\nabla u^{\frac{\gamma+m-1}{2}}\right\|_{L^2}^2
+\varepsilon_3\left(\int_\Omega u^\gamma\right)^{\frac{1}{2}}
\nonumber
\\
\leq&c_{53}(\gamma)\varepsilon_3\int_\Omega u^{\gamma-m+1+2(1+2\sigma)(1-\frac{1}{N})}
+\varepsilon_3\left(\int_\Omega u^\gamma\right)^{\frac{1}{2}}
\nonumber
\\
&+c_{53}(\gamma)(\frac{c_{51}(\gamma)}{\varepsilon_3}-1)\left\|w\right\|_{L^{\gamma-m+1+2(1+2\sigma)(1-\frac{1}{N})}}^{\gamma-m+1+2(1+2\sigma)(1-\frac{1}{N})}
+c_{54}(\gamma,\varepsilon_3),
\quad\text{for all }t\in(0,T_{\max}).
\end{align*}
Then invoking \eqref{4-22}, \eqref{4-23}, \eqref{4-34}, \eqref{4-35}, Young's inequality and Lemma \ref{le2.2}, as well as letting $\varepsilon_3$ be appropriately large, we estimate that
\begin{equation}\label{4-37}
\int_\Omega\left(u^\gamma+w^{\gamma-m+1+2(1+2\sigma)(1-\frac{1}{N})}\right)
\leq c_{57}(\gamma),
\quad\text{for all }t\in(0,T_{\max}).
\end{equation}

\begin{itemize}
\item~$m>1-\frac{1}{N}$ and $\sigma=\frac{mN+2-2N}{2N-2}$.
\end{itemize}
Employing \eqref{4-33}, \eqref{4-34}, \eqref{4-36} and choosing $\varepsilon_3$ to be sufficiently small, we know that
\begin{align*}
\frac{\mathrm{d}}{\mathrm{d}t}\int_\Omega\left(u^\gamma+w^{\gamma-m+1+2(1+2\sigma)(1-\frac{1}{N})}\right)
\leq c_{58}(\gamma)\left\|w\right\|_{L^{\gamma-m+1+2(1+2\sigma)(1-\frac{1}{N})}}^{\gamma-m+1+2(1+2\sigma)(1-\frac{1}{N})}
+c_{59}(\gamma),
\end{align*}
for all $t\in(0,T_{\max})$.
Then for any $T\in(0,T_{\max})$, we deduce that
\begin{equation}\label{4-38}
\int_\Omega\left(u^\gamma+w^{\gamma-m+1+2(1+2\sigma)(1-\frac{1}{N})}\right)\leq c_{60}(\gamma,T),
\quad\text{for any }t\in(0,T).
\end{equation}
In combination with \eqref{4-24}, \eqref{4-37} and \eqref{4-38}, we have finished the proof.
\end{proof}

Analogous to the approach employed in the proof of Lemma \ref{le4.2}, we are able to derive the subsequent lemma by utilizing the Moser iteration technique.

\begin{lemma}\label{le4.4}
Let $\Omega\subset\mathbb{R}^N~(N\geq 2)$ be a bounded smooth domain and let \eqref{1-16} hold. Assume that $D\in C^2([0,\infty))$ satisfies \eqref{D}, \eqref{1-15} with parameters $\xi_0>0,~k_D>0,~m\in\mathbb{R}~\text{and~}\sigma\in\mathbb{R}$.
Then for any $T\in(0,T_{\max})$, we can get the following results:
\begin{itemize}
\item [$(i)$] If $m>1-\frac{1}{N}$ and $\sigma<\frac{mN+2-2N}{2N-2}$, then there exists a constant $C_{17}>0$ such that
\begin{equation*}
\left\|u(\cdot,t)\right\|_{L^\infty}
\leq C_{17},
\quad\text{for all}~t\in(0,T_{\max}).
\end{equation*}
\end{itemize}
\begin{itemize}
\item [$(ii)$] If $m>1-\frac{1}{N}$ and $\sigma\leq\frac{mN+2-2N}{2N-2}$, then there exists a constant $C_{18}(T)>0$ such that
\begin{equation*}
\left\|u(\cdot,t)\right\|_{L^\infty}
\leq C_{18}(T),
\quad\text{for all}~t\in(0,T).
\end{equation*}
\end{itemize}
\end{lemma}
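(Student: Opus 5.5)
Lemma \ref{le4.4} will be proved by a Moser-type iteration that starts from the $L^\gamma$ bounds of Lemma \ref{le4.3}, following the scheme behind Lemma \ref{le4.2}. The first step is to obtain a bound on $\nabla v$ that is uniform in time on the relevant interval. In case $(i)$ we take $\gamma$ large in \eqref{4-18}. The representation \eqref{3-19} then gives $\|w(\cdot,t)\|_{L^\gamma}\le \|w_0\|_{L^\infty}+\sup_{s<t}\|u(\cdot,s)\|_{L^\gamma}\le c$. Elliptic $W^{2,\gamma}$ regularity for $-\Delta v+v=w$, combined with the Sobolev embedding for $\gamma>N$, yields $\|v(\cdot,t)\|_{W^{1,\infty}}\le c$ for all $t\in(0,T_{\max})$. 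Case $(ii)$ is identical, using \eqref{4-19} on $(0,T)$, with constants depending on $T$. As a consequence, the flux-limitation factor satisfies
\[
\bigl|(1+|\nabla v|^2)^\sigma\nabla v\bigr|\le c_\sigma
\]
for every $\sigma\in\mathbb{R}$. The cross-diffusive term therefore behaves like $\nabla\cdot(u\,\chi(x,t))$ with a bounded drift $\chi$.

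Next comes the $L^p$ recursion. As in the proof of Lemma \ref{le4.3}, we may restrict to $\{u>\xi_0\}$ and use \eqref{1-15}. Testing the first equation by $u^{p-1}$ and applying Young's inequality to the drift term gives
\[
\frac{\mathrm d}{\mathrm dt}\int_\Omega u^p+c\,p^{-1}\int_\Omega|\nabla u^{\frac{p+m-1}{2}}|^2\le c\,p^2\int_\Omega u^{p-m+1}+c\,p^2\int_\Omega u^{p}+c.
\]
Here $m>1-\frac1N$ guarantees that the exponent $p-m+1$ lies below $p+m-1+\frac2N$. This is what allows the Gagliardo--Nirenberg inequality, with $\|u\|_{L^{p/2}}$ (or the bound from the previous step) as the lower-order norm, to absorb the right-hand side into the dissipation. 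Adding $\int_\Omega u^p$ to both sides, we obtain an ODI of the form
\[
y_k'+y_k\le C\,a^{k}\max\{1,\sup_t y_{k-1}\}^{\lambda}
\]
for $p_k=2^k p_0$, where $y_k=\int_\Omega u^{p_k}$, the base $p_0$ is taken large enough to lie in the admissible range of Lemma \ref{le4.3}, and $a>1$ and $\lambda\le 2+\theta$ (for some fixed $\theta$) are constants independent of $k$. Integrating this ODI (on $(0,T_{\max})$ in case $(i)$, on $(0,T)$ in case $(ii)$) and iterating in the standard Alikakos--Moser fashion, as in \cite{TW2012}, we get $\sup_t\|u\|_{L^{p_k}}\le C$ uniformly in $k$. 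Letting $k\to\infty$ then yields the $L^\infty$ bounds $C_{17}$ and $C_{18}(T)$.

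The main obstacle is controlling the growth of the constants in the iteration. The mismatch exponent $m-1$ between $p-m+1$ and $p+m-1$, which is possibly negative when $m<1$, must be handled so that $\lambda$ stays bounded and the powers of $a^k$ sum to a convergent series. I would first try to fix the Gagliardo--Nirenberg interpolation exponent uniformly in $k$, using the fact that it tends to a limit strictly less than $1$ as $p\to\infty$ precisely because $m>1-\frac1N$. Failing that, I would invoke an existing Moser lemma for quasilinear equations of the form $u_t\le\nabla\cdot(D(u)\nabla u)+\nabla\cdot(u\chi)$ with $\chi\in L^\infty$ and $u\in L^\infty_tL^\gamma_x$ for large $\gamma$. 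The initial datum creates no difficulty, since $u_0\in C^1(\overline\Omega)$ gives $\|u_0\|_{L^\infty}$ as a uniform starting bound.
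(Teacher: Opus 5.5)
Your plan follows the paper's own route. The paper proves Lemma \ref{le4.4} only by remarking that a Moser-type iteration, as in \cite{TW2012} (the one invoked for Lemma \ref{le4.2}), upgrades the $L^\gamma$ bounds of Lemma \ref{le4.3} to $L^\infty$ bounds. Your preliminary step is exactly what makes that iteration applicable: $\|w\|_{L^\gamma}$ via \eqref{3-19}, then elliptic $W^{2,\gamma}$ regularity with $\gamma>N$, hence $\nabla v\in L^\infty$ and a bounded flux-limited drift for every $\sigma\in\mathbb{R}$. Your fallback of quoting an existing Moser lemma is therefore precisely what the paper does.

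What needs fixing is the bookkeeping in your own version of the iteration.

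\textbf{The recursion exponent.} A recursion $y_k'+y_k\le Ca^k\max\{1,\sup_t y_{k-1}\}^{\lambda}$ with a fixed $\lambda>2$ does not close. It only gives $\log\sup_t y_k\lesssim\lambda^k$, whereas bounding $(\sup_t y_k)^{1/p_k}$ requires $\log\sup_t y_k\lesssim p_k=2^kp_0$. So ``$\lambda$ stays bounded'' is not the right criterion; you need $\prod_k\lambda_k/2<\infty$. The computation does give this. Apply Gagliardo--Nirenberg to $u^{\frac{p+m-1}{2}}$ with $\|u\|_{L^{p/2}}$ as the lower-order norm, then Young's inequality. The main term $\int_\Omega u^{p-m+1}$ then produces the exponent
\[
\lambda_p=\frac{2p+2(N-1)(m-1)}{p+2N(m-1)}=2+O\Bigl(\frac1p\Bigr)
\]
on $\sup_t\int_\Omega u^{p/2}$. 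The remaining terms likewise give exponents $2+O(\frac1p)$, and that summability is what makes the Alikakos--Moser scheme converge.

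\textbf{The role of $m>1-\frac1N$.} In this iteration step the power of $\|\nabla u^{\frac{p+m-1}{2}}\|_{L^2}$ tends to $\frac{2N}{N+2}<2$ for every $m$, so $m>1-\frac1N$ plays no role in running the iteration. The threshold $p+m-1+\frac2N$ and the condition $m>1-\frac1N$ belong to the $L^1$-based estimate of Lemma \ref{le4.3}, where the interpolation exponent tends to $1$. They are needed only to start the iteration.
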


\subsection{The proof of Theorem \ref{th1.2}.}
According to \eqref{2-2}, Lemma \ref{le4.2} and Lemma \ref{le4.4}, we gain $T_{\max}=\infty$ and the boundedness  of the solution for \eqref{wen} in Theorem \ref{th1.2}.

\end{document}